\documentclass[11pt,a4paper]{article}
\usepackage[utf8]{inputenc}
\usepackage[english]{babel}

\usepackage{fullpage}

\usepackage{graphicx}
\usepackage{amsmath}
\usepackage{amsfonts}
\usepackage{amssymb}
\usepackage{enumerate}
\usepackage{amsthm} 

\usepackage{caption}
\usepackage{subcaption}

\usepackage[dvipsnames]{xcolor}

\usepackage{tkz-graph} 
\usepackage{tkz-berge} 

\usepackage[ruled,vlined,linesnumbered]{algorithm2e} 
\SetKw{KwAnd}{and}

\newcounter{proofcase}
\newcounter{proofsubcase}[proofcase]
\newcounter{proofsubsubcase}[proofsubcase]
\newcounter{proofclaim}
\newcounter{proofblock}
\renewcommand{\theproofcase}{\arabic{proofcase}}
\renewcommand{\theproofsubcase}{\theproofcase.\arabic{proofsubcase}}
\renewcommand{\theproofsubsubcase}{\theproofsubcase.\arabic{proofsubsubcase}}
\renewcommand{\theproofclaim}{\Alph{proofclaim}}
\newcommand{\resetproofclaims}{\stepcounter{proofblock}\setcounter{proofclaim}{0}}
\newcommand{\resetproofcases}{\setcounter{proofcase}{0}\setcounter{proofsubcase}{0}\setcounter{proofsubsubcase}{0}}
\newcommand{\proofclaimspace}{\par\smallskip}
\newcommand{\proofcase}[1]{\stepcounter{proofcase}\setcounter{proofsubcase}{0}\par\medskip\indent\emph{Case~\theproofcase: #1.} }
\newcommand{\proofsubcase}[1]{\stepcounter{proofsubcase}\par\smallskip\indent\emph{Subcase~\theproofsubcase: #1.} }
\newcommand{\proofsubsubcase}[1]{\stepcounter{proofsubsubcase}\par\smallskip\indent\emph{Subcase~\theproofsubsubcase: #1.} }
\newcommand{\proofclaim}[1]{\refstepcounter{proofclaim}\proofclaimspace\indent\emph{Claim~\theproofclaim: #1.} }

\usepackage{hyperref}
\hypersetup{
    colorlinks=true,
    linkcolor=NavyBlue,
    citecolor=NavyBlue,
    urlcolor=NavyBlue
}

\newtheorem{theorem}{Theorem}[section]

\newtheorem{lemma}[theorem]{Lemma}
\newtheorem{proposition}[theorem]{Proposition}

\numberwithin{subcase}{case}

\numberwithin{subsubcase}{subcase}

\theoremstyle{definition}
\newtheorem{definition}[theorem]{Definition}
\newtheorem{remark}[theorem]{Remark}

\usetikzlibrary{arrows,backgrounds,calc,trees}

\pgfdeclarelayer{background}
\pgfsetlayers{background,main}

\DeclareMathOperator{\nil}{\emptyset}
\DeclareMathOperator{\cert}{cert}
\DeclareMathOperator{\DHG}{DH}
\DeclareMathOperator{\OVE}{OVE}
\DeclareMathOperator{\TS}{TS}
\newcommand{\flabel}{\mathsf{f}}
\newcommand{\tlabel}{\mathsf{t}}
\newcommand{\plabel}{\mathsf{p}}

\title{Characterization and linear-time recognition of balanced distance-hereditary graphs}
\author{
Luc\'ia Busolini\thanks{Departamento de Matem\'atica, Facultad de Ciencias Exactas y Naturales, Universidad de Buenos Aires, Buenos Aires, Argentina and Instituto de C\'alculo, CONICET-Universidad de Buenos Aires, Buenos Aires, Argentina. E-mail: \texttt{lbusolini@dm.uba.ar}}
\and
Guillermo Dur\'an\thanks{Departamento de Matem\'atica, Facultad de Ciencias Exactas y Naturales, Universidad de Buenos Aires, Buenos Aires, Argentina and Instituto de C\'alculo, CONICET-Universidad de Buenos Aires, Buenos Aires, Argentina. E-mail: \texttt{gduran@dm.uba.ar}}
\and
Mart\'in D.\ Safe\thanks{Departamento de Matem\'atica, Universidad Nacional del Sur (UNS), Bah\'ia Blanca, Argentina and INMABB, Universidad Nacional del Sur (UNS)-CONICET, Bah\'ia Blanca, Argentina. E-mail: \texttt{msafe@uns.edu.ar}}
}
\date{}

\begin{document}

\maketitle

\begin{abstract}
A graph is balanced if its clique-matrix contains no square submatrix of odd order with exactly two $1$'s in each row and in each column. Although it is known that a graph is balanced if and only if it contains no induced extended odd sun, a characterization of balanced graphs by minimal forbidden induced subgraphs is still unknown. In this work, we prove that, within the class of distance-hereditary graphs, balanced graphs are exactly the hereditary clique-Helly graphs. Equivalently, they are characterized by a single forbidden induced subgraph, namely $\overline{3K_2}$. From this result, we derive an explicit linear-time algorithm that decides balancedness within the class of distance-hereditary graphs and returns an induced $\overline{3K_2}$ when the input graph is not balanced.
\end{abstract}

\section{Introduction}\label{sec:intro}

Let $G$ be a graph. Let $Q_1,\ldots,Q_k$ be the maximal cliques and let $v_1,\ldots,v_n$ be the vertices of $G$. A \emph{clique-matrix} of $G$ is the matrix $A=(a_{ij})$ whose rows are indexed by $Q_1,\ldots,Q_k$, whose columns are indexed by $v_1,\ldots,v_n$, and such that $a_{ij}=1$ if $v_j\in Q_i$ and $a_{ij}=0$ otherwise. A graph $G$ is \emph{balanced}~\cite{definicionBalanceado} if its clique-matrix contains no square submatrix of odd order with exactly two $1$'s in each row and in each column. The name ‘balanced graphs’ was introduced by Berge and Chvátal~\cite{definicionBalanceado}. Berge and Las Vergnas~\cite{MR266787-BergeLasVergnas} proved that balanced graphs are perfect and clique-perfect, and Berge~\cite{Berge} proved that they are also hereditary clique-Helly (see Section~\ref{sec:prelim}).

Since balanced graphs are perfect, they have no odd holes and no odd antiholes, where an \emph{odd hole} in a graph $G$ is a chordless cycle of $G$ whose length is odd and at least $5$ and an \emph{odd antihole} in $G$ is an odd hole in the complement $\overline{G}$. Moreover, Lehel and Tuza~\cite{LehelTuza} showed that balanced graphs contain no induced odd suns. Finally, Bonomo et al.\ \cite{OnBalancedGraphs} characterized balanced graphs by means of extended odd suns, a family of graphs that generalizes odd holes and odd suns.

\begin{theorem}[\cite{OnBalancedGraphs}] A graph is balanced if and only if it contains no induced extended odd sun.\end{theorem}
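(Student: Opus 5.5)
The characterization is the cited result of Bonomo et al.\ \cite{OnBalancedGraphs}. I would prove it by translating between odd cycle submatrices of the clique-matrix and induced subgraphs, proving the two directions separately. Throughout, recall the definition of an extended odd sun: it contains an odd cycle $v_1,\ldots,v_k$ (not necessarily induced, $k\ge 3$ odd) whose edges $v_iv_{i+1}$ are each either \emph{(a)} covered by a common neighbour $w_i$ of exactly $v_i$ and $v_{i+1}$ among the cycle vertices, or \emph{(b)} such that $v_{i-1}$ and $v_{i+2}$ are non-adjacent to $v_i$ and $v_{i+1}$ appropriately. Here $w_i$ is also nonadjacent to the other $w_j$ in the required pattern.

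\textbf{Extended odd sun implies not balanced.} Suppose $H$ is an induced extended odd sun with cycle $v_1,\ldots,v_k$. For each $i$, choose a maximal clique $Q_i$ of $G$ containing $v_i,v_{i+1}$, and $w_i$ in case (a).
\begin{itemize}
\item In case (a), since $w_i$ is adjacent to no other $v_j$, the clique $Q_i$ contains no other cycle vertex.
\item In case (b), the required non-adjacencies again force $Q_i\cap\{v_1,\ldots,v_k\}=\{v_i,v_{i+1}\}$.
\end{itemize}
The $k\times k$ submatrix with rows $Q_1,\ldots,Q_k$ and columns $v_1,\ldots,v_k$ then has exactly two 1's in each row and each column. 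The rows are distinct, because each $Q_i$ meets the cycle in a different pair. So the clique-matrix is not balanced. This direction is routine once the definition is unpacked.

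\textbf{Not balanced implies an induced extended odd sun.} Take an odd $k\times k$ submatrix with two 1's per row and column, with $k$ minimal. Its rows are cliques $Q_1,\ldots,Q_k$ and its columns vertices $v_1,\ldots,v_k$. The submatrix is the incidence matrix of a cycle $C$, possibly after relabelling, and minimality forces it to be a single odd cycle. Indeed, a union of several cycles would contain an odd one, which is a smaller odd submatrix. So $v_i,v_{i+1}\in Q_i$ and $Q_i\cap V(C)=\{v_i,v_{i+1}\}$.

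For each $i$, either some vertex $v_j$ with $j\ne i,i+1$ is nonadjacent to something forcing case (b), or, by maximality of the clique, there is a vertex $w_i\in Q_i$ witnessing that $Q_i$ cannot be extended to include other cycle vertices. Next we must handle chords of $C$. A chord $v_iv_j$ splits $C$ into two cycles, one of them odd. We then want to produce a smaller odd cycle submatrix using a maximal clique containing the chord. This needs that clique to meet $V(C)$ only in $\{v_i,v_j\}$, which I would get by choosing the chord appropriately, for instance a shortest chord, or using the triangle-free structure of the clique hypergraph.

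\textbf{Main obstacle.} The second direction is where the difficulty lies. The key is to choose the minimal counterexample carefully, minimizing first $k$ and then the number of chords, and to verify that the selected $w_i$ are pairwise correctly (non)adjacent. If two of them, $w_i$ and $w_j$, are adjacent or coincide, I would exhibit a shorter odd cycle matrix through a clique containing them, contradicting minimality. Once this is done, the vertices $v_i$ together with the $w_i$ induce an extended odd sun, as in the case analysis of \cite{OnBalancedGraphs}.
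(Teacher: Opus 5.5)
The paper does not prove this statement. It quotes it from \cite{OnBalancedGraphs} and never uses it again; the paper's balancedness arguments go through Berge's bicoloring theorem, Theorem~\ref{thm:balancedhypergraphs}. So your proposal has to stand on its own.

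\textbf{First direction.} Your argument works once the definition is stated correctly. An edge $v_iv_{i+1}$ of $C$ is \emph{proper} when no vertex of $C$ at all is adjacent to both $v_i$ and $v_{i+1}$. Because of chords, such a vertex need not be $v_{i-1}$ or $v_{i+2}$, so your case (b) is too weak. For a non-proper edge, $w_i$ must in particular be nonadjacent to every vertex of $C$ that forms a triangle with $v_iv_{i+1}$. No condition is imposed on adjacencies among the $w_i$. With this reading, a maximal clique of $G$ through $v_i,v_{i+1}$ (and $w_i$) meets $V(C)$ exactly in $\{v_i,v_{i+1}\}$, and your odd cycle submatrix is correct.

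\textbf{Converse.} This direction is not proved. Two of your planned steps aim at false statements, and the step that carries the difficulty is asserted rather than proved.
\begin{enumerate}[(1)]
\item The $w_i$ cannot in general be made pairwise nonadjacent. Take $\overline{3K_2}$ with nonedges $aa'$, $bb'$, $cc'$. The triangle $a,b,c$ with maximal cliques $\{a,b,c'\}$, $\{b,c,a'\}$, $\{c,a,b'\}$ gives an odd cycle submatrix of the least possible order $3$. Its only witnesses $c',a',b'$ are pairwise adjacent, and in fact the only induced extended odd sun of $\overline{3K_2}$ is the whole graph. So there is no shorter odd submatrix to fall back on.
\item Chords cannot be eliminated. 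Add to the cycle $v_1\cdots v_5$ the chord $v_1v_3$, a vertex $a$ adjacent exactly to $v_1,v_2$, and a vertex $b$ adjacent exactly to $v_2,v_3$. This graph has no odd cycle submatrix of order $3$, and its only one of order $5$ lives on the chorded cycle $v_1\cdots v_5$. The chord lies only in the maximal clique $\{v_1,v_2,v_3\}$, which contains the whole triangle the chord closes, so no shortening is possible. This graph is itself an extended odd sun with a chord.
\item The crux is your sentence ``by maximality of the clique, there is a vertex $w_i\in Q_i$'', which swaps quantifiers. Maximality gives, for each $v_j\notin Q_i$ separately, some vertex of $Q_i$ nonadjacent to $v_j$. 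You need one vertex missing all cycle vertices that form a triangle with $v_iv_{i+1}$. This fails even at minimum order. Take adjacent vertices $v_1,v_2$, both complete to an induced path $v_3\,a\,b\,v_5$. Add vertices $v_4$, $c$, $d$ adjacent exactly to $\{v_3,v_5\}$, $\{v_2,v_3\}$, $\{v_1,v_5\}$, respectively.
\begin{itemize}
\item The cycle $v_1\cdots v_5$ with cliques $\{v_1,v_2,a,b\}$, $\{v_2,v_3,c\}$, $\{v_3,v_4\}$, $\{v_4,v_5\}$, $\{v_5,v_1,d\}$ is an odd cycle submatrix of minimum order $5$; there is none of order $3$.
\item $\{v_1,v_2,a,b\}$ is the only maximal clique through $v_1v_2$ avoiding the rest of the cycle. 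Both $v_3$ and $v_5$ form triangles with $v_1v_2$, but $a\sim v_3$ and $b\sim v_5$, so no witness exists.
\item An induced extended odd sun of this graph is the hole $a\,v_3\,v_4\,v_5\,b$, which lies on a different cycle.
\end{itemize}
\end{enumerate}
You would therefore have to prove that some choice or rerouting of the configuration always yields such witnesses; your ``fewest chords'' tie-break is one candidate. The proposal gives no argument for this and defers instead to ``the case analysis of \cite{OnBalancedGraphs}'', which is the theorem itself.
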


However, this characterization is not by minimal forbidden induced subgraphs. Indeed, some extended odd suns contain other extended odd suns as proper induced subgraphs (see Figure~\ref{fig:extoddsunsinother}). In fact, the characterization of balanced graphs by minimal forbidden induced subgraphs remains unknown, but some partial results are known~\cite{MR4103815}.

\begin{figure}
    \centering
    \begin{tikzpicture}[scale=0.4]
       \SetVertexNoLabel
       \GraphInit[vstyle=Classic]

        \begin{scope}[rotate=18]
        \begin{scope}[rotate=36]
        \grEmptyCycle[prefix=v,RA=5]{5}
        \end{scope}
        \grEmptyCycle[prefix=w,RA=3]{5}
        \EdgeInGraphLoop{w}{5}
        \end{scope}

        \Edges(w0,v0, w1)
        \Edges(w1,v1, w2)
        \Edges(w2,v2, w3)
        \Edges(w3,v3, w4)
        \Edges(w4,v4, w0)

        \Edges(w3, w1, w4)

        \tikzstyle{EdgeStyle}=[line width=0.75mm]
        \Edges(w1,v1,w2, v2, w3, w4, w1,w3,w2,w1)
    \end{tikzpicture}
    \caption{An extended odd sun that is not a minimal induced subgraph for the class of balanced graphs. Bold lines are the edges of a proper induced extended odd sun.}
    \label{fig:extoddsunsinother}
\end{figure}

Balanced graphs were characterized by minimal forbidden induced subgraphs when restricted to diamond-free graphs~\cite{Apollonio}, some subclasses of circular-arc graphs~\cite{BalancednessCircularArcGraphs}, paw-free graphs~\cite{CliquePerfectnessAndBalancedness}, $P_4$-tidy graphs~\cite{CliquePerfectnessAndBalancedness}, claw-free graphs~\cite{BusoliniDuranSafe2023ClawFreeBalancedGraphs}, complements of bipartite graphs, line graphs of multigraphs, and complements of line graphs of multigraphs~\cite{OnMinimalForbiddenSubgraphs}.

Balanced graphs can be recognized in polynomial time by applying Zambelli's recognition algorithm for balanced matrices~\cite{MR2154177}. More precisely, as observed in~\cite{OnMinimalForbiddenSubgraphs}, the balancedness of a graph with $n$ vertices and $m$ edges can be decided in $O(m^9+n)$ time. Within some graph classes, however, structural characterizations lead to linear-time algorithms for recognizing balancedness. This is the case for $P_4$-tidy graphs and paw-free graphs~\cite{CliquePerfectnessAndBalancedness}, complements of bipartite graphs, line graphs of multigraphs, and complements of line graphs of multigraphs~\cite{OnMinimalForbiddenSubgraphs}.

Since the class of $P_4$-tidy graphs contains all \emph{cographs} (i.e., all $P_4$-free graphs), the results of~\cite{CliquePerfectnessAndBalancedness} imply that, within the class of cographs, a graph is balanced if and only if it is hereditary clique-Helly and balancedness of cographs is characterized by the single forbidden induced subgraph $\overline{3K_2}$. Moreover, the same work implies a linear-time recognition algorithm for balancedness of cographs. The graph $\overline{3K_2}$ is shown in Figure~\ref{fig:co-3K2}.

\begin{figure}
    \centering
    \begin{tikzpicture}[scale=0.35]
        \centering
        \SetVertexNoLabel
        \GraphInit[vstyle=Classic]
        \grTriangularGrid{3}
        \Edge[style={bend left}](a0;2)(a2;0)
        \Edge[style={bend left}](a2;0)(a0;0)
        \Edge[style={bend left}](a0;0)(a0;2)
    \end{tikzpicture}
    \caption{The graph $\overline{3K_2}$.}
    \label{fig:co-3K2}
\end{figure}

We study balancedness in the class of distance-hereditary graphs. A graph is \emph{distance-hereditary} if, in every connected induced subgraph, each pair of vertices is at the same distance as in the original graph~\cite{MR485544}. Distance-hereditary graphs are known to be perfect~\cite{MR485544} and clique-perfect~\cite{MR2203202}. The class of distance-hereditary graphs contains all cographs, but it is neither a subclass nor a superclass of the class of $P_4$-tidy graphs. For instance, $C_5$ is $P_4$-tidy but not distance-hereditary, whereas $P_6$ is distance-hereditary but not $P_4$-tidy. Thus, the results for $P_4$-tidy graphs do not cover the distance-hereditary case considered here.

In this work, we extend the balancedness results for cographs to the class of distance-hereditary graphs. More precisely, we prove that, within this larger class, balanced graphs are exactly the hereditary clique-Helly graphs and are characterized by the same single forbidden induced subgraph, namely $\overline{3K_2}$. From this result, we derive an explicit linear-time algorithm that decides balancedness within the class of distance-hereditary graphs and returns an induced $\overline{3K_2}$ whenever the input graph is not balanced.

This work is organized as follows. In Section~\ref{sec:prelim}, we introduce the graph and hypergraph terminology and preliminary results used throughout the paper. In Section~\ref{sec:dh-graphs}, we recall characterizations and recognition results for distance-hereditary graphs, including their characterization in terms of one-vertex-extension trees. In Section~\ref{sec:bicolorings}, we derive from Berge's bicoloring characterization of balanced hypergraphs a corresponding bicoloring characterization of balanced graphs and then adapt it to the case of graphs with false twins. In Section~\ref{sec:dh-balanced}, we prove the forbidden induced subgraph characterization of balancedness within the class of distance-hereditary graphs. In Section~\ref{sec:algorithmics}, we give a linear-time algorithm to recognize balancedness within the class of distance-hereditary graphs.

\section{Preliminaries}\label{sec:prelim}

\emph{Graphs.} All graphs in this work are finite, undirected, and without loops or multiple edges. If $G$ is a graph, we denote its vertex set and edge set by $V(G)$ and $E(G)$, respectively. If $S\subseteq V(G)$, we denote by $G-S$ the graph that arises from $G$ by removing all vertices in $S$. The \emph{subgraph of $G$ induced by $S$}, denoted by $G[S]$, is defined as $G-(V(G)-S)$. If $v\in V(G)$, we write $G-v$ instead of $G-\{v\}$. We denote by $\overline{G}$ the complement of $G$. If $G_1$ and $G_2$ are vertex-disjoint graphs, then $G_1\cup G_2$ denotes their disjoint union. If $r$ is a positive integer and $G$ is a graph, $rG$ denotes the disjoint union of $r$ copies of $G$.

Let $G$ be a graph. We denote by $N_G(v)$ the \emph{neighborhood} of a vertex $v$ in $G$ and by $N_G[v]$ its \emph{closed neighborhood} in $G$, which is $N_G(v)\cup\{v\}$. A vertex is \emph{pendant} if its degree is one. A vertex $v$ of $G$ is \emph{universal} if $N_G[v]=V(G)$. Two distinct vertices $u$ and $v$ of $G$ are called \emph{false twins} if $N_G(u)=N_G(v)$ and \emph{true twins} if $N_G[u]=N_G[v]$. In particular, true twins are adjacent to each other, while false twins are not.

A \emph{clique} in a graph is a set of pairwise adjacent vertices. A \emph{maximal clique} is an inclusion-wise maximal clique.

Let $G$ be a graph. If $A\subseteq V(G)$, a vertex $v\in V(G)$ is \emph{complete to} $A$ in $G$ if $A\subseteq N_G(v)$. If $A,B\subseteq V(G)$, then $B$ is \emph{complete to} $A$ in $G$ if $b$ is complete to $A$ in $G$ for each $b\in B$.

A \emph{path} $P$ in a graph $G$ is a sequence $P=v_1,v_2,\ldots,v_k$ of distinct vertices of $G$ such that $v_i$ and $v_{i+1}$ are adjacent in $G$ for each $i\in\{1,2,\ldots,k-1\}$. It is called a \emph{path on $k$ vertices} and its \emph{length} is $k-1$. If $x$ and $y$ are vertices of $G$ in the same connected component, the \emph{distance} between $x$ and $y$ in $G$ is the length of a shortest path of $G$ joining $x$ and $y$. A \emph{chord} of $P$ is an edge of $G$ joining vertices $v_i$ and $v_j$ for two distinct nonconsecutive $i,j\in\{1,2,\ldots,k\}$. The path $P$ is \emph{chordless} if it has no chords. We denote the chordless path, the chordless cycle, and the complete graph on $n$ vertices by $P_n$, $C_n$, and $K_n$, respectively. A \emph{hole} in a graph $G$ is a chordless cycle of $G$ of length at least $4$ and an \emph{antihole} in $G$ is a hole in $\overline{G}$. For $n\geq 3$, the \emph{wheel} $W_n$ is the graph that arises from $C_n$ by adding one vertex adjacent to every vertex of $C_n$.

If $G$ and $H$ are graphs, we say that $G$ is \emph{$H$-free} if $G$ has no induced subgraph isomorphic to $H$. A class $\mathcal{G}$ of graphs is \emph{hereditary} if, for every graph $G\in\mathcal{G}$, each induced subgraph of $G$ belongs to $\mathcal{G}$. If $\mathcal{G}$ is a hereditary class of graphs, then a graph $H$ is a \emph{minimal forbidden induced subgraph} for $\mathcal{G}$ if $H\notin\mathcal{G}$ and every proper induced subgraph of $H$ belongs to $\mathcal{G}$.

A graph $G$ is \emph{perfect}~\cite{Berge-perf2} if, in every induced subgraph of $G$, the chromatic number equals the size of a maximum clique. That perfect graphs are precisely the graphs having no odd holes and no odd antiholes was conjectured by Berge around 1960~\cite{Berge-perf2} and proved by Chudnovsky et al.\ \cite{SPGT}.

A \emph{clique-transversal} of a graph $G$ is a set of vertices of $G$ meeting every maximal clique of $G$. A \emph{clique-independent set} of a graph $G$ is a set of pairwise disjoint maximal cliques of $G$. A graph $G$ is \emph{clique-perfect}~\cite{MR1737764} if, in every induced subgraph of $G$, the minimum size of a clique-transversal equals the maximum size of a clique-independent set. For clique-perfect graphs, neither a characterization by minimal forbidden induced subgraphs nor a polynomial-time recognition algorithm is known, although several partial results are available (see~\cite{MR4103815}).

A graph $G$ is \emph{clique-Helly} if every family of pairwise intersecting maximal cliques of $G$ has nonempty total intersection. A graph $G$ is \emph{hereditary clique-Helly}~\cite{PrimeraDefinicionHCH} if every induced subgraph of $G$ is clique-Helly. Prisner characterized hereditary clique-Helly graphs by four six-vertex minimal forbidden induced subgraphs~\cite{PrimeraDefinicionHCH}. The following result of Berge is used in the proof of our main theorem.

\begin{theorem}[{\cite[Proposition~7]{Berge}}]\label{thm:balanced-HCH} Every balanced graph is hereditary clique-Helly.\end{theorem}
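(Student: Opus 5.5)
The plan is to prove the statement directly from the definitions, by exhibiting a forbidden $3\times 3$ submatrix of the clique-matrix whenever the Helly property fails in some induced subgraph. No structural characterization (such as Prisner's) is needed. Let $G$ be balanced, and suppose, for a contradiction, that some induced subgraph $H=G[S]$ is not clique-Helly. Choose a family $Q_1,\ldots,Q_k$ of pairwise intersecting maximal cliques of $H$ with empty total intersection, and take $k$ minimum. Since the cliques pairwise intersect, $k\geq 3$. By minimality, for each $i$ the subfamily obtained by deleting $Q_i$ has nonempty intersection. So we may pick a vertex $x_i\in\bigcap_{j\neq i}Q_j$, and necessarily $x_i\notin Q_i$. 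In particular $x_1,x_2,x_3$ are distinct vertices of $S$.

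Next I transfer this configuration from $H$ to $G$. For $i=1,2,3$, extend $Q_i$ to a maximal clique $Q_i'$ of $G$. The key observation is that $Q_i'\cap S=Q_i$. Indeed, $Q_i'\cap S$ is a clique of $H$ containing $Q_i$, and $Q_i$ is maximal in $H$. Consequently $x_i\notin Q_i'$, because $x_i\in S\setminus Q_i$, while $x_j\in Q_j\subseteq Q_i'$ for $j\neq i$. The rows $Q_1',Q_2',Q_3'$ are pairwise distinct, since each $Q_i'$ omits $x_i$ but contains the other two chosen vertices.

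Now consider the submatrix of the clique-matrix of $G$ with rows $Q_1',Q_2',Q_3'$ and columns $x_1,x_2,x_3$. By the previous step its entry in row $Q_i'$ and column $x_j$ is $1$ exactly when $i\neq j$. So it is the $3\times 3$ matrix with zeros on the diagonal and ones elsewhere. This is a square submatrix of odd order with exactly two $1$'s in each row and each column, which contradicts the balancedness of $G$. Hence every induced subgraph of $G$ is clique-Helly, that is, $G$ is hereditary clique-Helly.

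The only delicate point is the passage from cliques of $H$ to maximal cliques of $G$, namely verifying that the extension $Q_i'$ does not swallow $x_i$. This is exactly where maximality of $Q_i$ within $H$, together with $x_i\in S$, is used. The rest is bookkeeping with the minimal counterexample family.
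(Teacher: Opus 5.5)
Your proof is correct. The paper gives no argument for this statement; it imports it directly from Berge \cite[Proposition~7]{Berge}, where it is part of the general theory of balanced hypergraphs.

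In that setting the usual route is as follows. Every submatrix of a balanced matrix is balanced, so every partial hypergraph of a restriction $\mathcal{C}(G)_S$ is balanced. The hypergraph $\mathcal{C}(G[S])$ is such a partial hypergraph; this is exactly your observation $Q_i'\cap S=Q_i$. Balanced hypergraphs have the K\"onig property $\tau=\nu$, and a pairwise intersecting family has $\nu=1$. Hence $\tau=1$, which means the family has a common vertex.

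Your argument avoids the K\"onig property, which is a considerably deeper fact. Instead you pass to a minimal non-Helly family and read off the pattern $J-I$ on the columns $x_1,\ldots,x_k$; its $3\times 3$ block is already a forbidden submatrix. This keeps the proof self-contained and shows a bit more: you only use the absence of order-$3$ cycle submatrices. So every graph whose clique-matrix has no $3\times 3$ submatrix with exactly two $1$'s per row and column is hereditary clique-Helly.

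The converse also holds. Such a submatrix gives a triangle $x_1x_2x_3$ and, by maximality, vertices $y_i\in Q_i'$ nonadjacent to $x_i$. These six vertices induce one of Prisner's four obstructions, with $\overline{3K_2}$ among them.

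What the hypergraph route offers in exchange is uniformity: the Helly property follows from a general theorem about balanced hypergraphs, with no graph-specific work beyond the restriction step.
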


For graph terminology not defined here, the reader is referred to~\cite{MR1367739-West}.

\smallskip\noindent \emph{Matrices and hypergraphs.} A \emph{$\{0,1\}$-matrix} is a matrix whose entries belong to $\{0,1\}$. A $\{0,1\}$-matrix is \emph{balanced}~\cite{Berge} if it contains no square submatrix of odd order with exactly two $1$'s in each row and in each column.

A \emph{hypergraph}~\cite{MR0357172} is an ordered pair $H=(V,\mathcal{E})$ where $V=\{v_1,v_2,\ldots,v_n\}$ is a finite set, called the \emph{vertices}, and $\mathcal{E}=\{E_i\colon i\in I\}$ is a family of nonempty subsets of $V$ such that $\bigcup_{i\in I}E_i=V$. The elements of $\mathcal{E}$ are called the \emph{edges} of $H$. The \emph{subhypergraph of $H$ induced by} $S\subseteq V$ is the hypergraph $H_S=(S,\mathcal{E}_S)$ where $\mathcal{E}_S=\{E_i\cap S\colon i\in I \text{ and } E_i\cap S\neq\emptyset\}$.

The \emph{clique hypergraph} of a graph $G$ is the hypergraph $\mathcal{C}(G)$ whose vertex set is $V(G)$ and whose edge family is the set of maximal cliques of $G$. If $H$ is a hypergraph, an \emph{incidence matrix of $H$} is a $\{0,1\}$-matrix having a row for each edge and a column for each vertex such that an entry is $1$ if the vertex corresponding to its column belongs to the edge corresponding to its row.

A hypergraph $H$ is \emph{balanced}~\cite{Berge} if an incidence matrix of $H$ is balanced. A clique-matrix of a graph $G$ is an incidence matrix of $\mathcal{C}(G)$. Thus, a graph $G$ is balanced if and only if $\mathcal{C}(G)$ is balanced.

A \emph{$k$-coloring} of a hypergraph $H$ is an assignment of one of the colors $1,2,\ldots,k$ to each of its vertices. A \emph{proper $k$-coloring} of $H$ is a $k$-coloring such that no edge of $H$ with at least two vertices is monochromatic.

Berge proved the following characterization of balanced hypergraphs in terms of proper $2$-colorings.

\begin{theorem}[\cite{Berge}] \label{thm:balancedhypergraphs} A hypergraph $H=(V,\mathcal{E})$ is balanced if and only if, for every $S\subseteq V$, the induced subhypergraph $H_S$ admits a proper $2$-coloring.\end{theorem}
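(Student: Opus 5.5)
The plan is to prove the two directions separately. Throughout, I use the standard reformulation of a forbidden submatrix. An odd square submatrix of an incidence matrix of $H$ with exactly two $1$'s in each row and column is the same thing as an \emph{odd cycle} $x_1,E_1,x_2,E_2,\ldots,x_k,E_k,x_1$ with $k$ odd. Here the $x_i$ are distinct vertices, the $E_i$ are distinct edges, and $E_i\cap\{x_1,\ldots,x_k\}=\{x_i,x_{i+1}\}$ (indices mod $k$).

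\emph{Sufficiency.} Suppose every $H_S$ admits a proper $2$-coloring, but $H$ is not balanced. Take such an odd cycle and let $S=\{x_1,\ldots,x_k\}$. Then $\mathcal{E}_S$ contains the two-element sets $E_i\cap S=\{x_i,x_{i+1}\}$. A proper $2$-coloring of $H_S$ must therefore give $x_i$ and $x_{i+1}$ different colors for every $i$. This properly $2$-colors the odd cycle $x_1x_2\cdots x_k$, which is impossible.

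\emph{Necessity, reduction step.} First I observe that balancedness is inherited by induced subhypergraphs. An incidence matrix of $H_S$ is obtained from one of $H$ by deleting the columns outside $S$ and the zero rows; possibly some rows then coincide, and repeated rows do not matter. Every square submatrix of this matrix is a square submatrix of the original one. So it suffices to show that every balanced hypergraph $H$ admits a proper $2$-coloring.

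\emph{Necessity, main step.} I would take a $2$-coloring $c$ of $V$ minimizing the number of monochromatic edges of size at least $2$. Suppose some edge $E_0$ with $|E_0|\ge 2$ is monochromatic, say of color $1$. Recoloring a single vertex $x\in E_0$ to color $2$ must create a new monochromatic edge $E_1\ni x$, all of whose other vertices have color $2$; otherwise the count would drop. Iterating this, I would build an alternating sequence $x_0,E_0,x_1,E_1,\ldots$ in which the colors along the sequence alternate. The aim is to show that such exchanges eventually close up into a cycle in the bipartite vertex--edge incidence graph.

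The parity of that cycle is forced by the alternation of colors together with the original monochromatic edge $E_0$. Following it once around, the colors must flip an odd number of times to be consistent, so the closed walk is odd. Among all odd closed alternating walks obtained this way, I would take a shortest one. Minimality should then ensure that no edge of the walk contains a third vertex of the walk, since such a chord would split the walk into a shorter odd one. This gives exactly an odd square submatrix with two $1$'s per row and column, contradicting balancedness.

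The main obstacle is making this exchange argument rigorous. One must show that the recoloring process genuinely produces a cycle rather than stalling. One must also show that the shortest odd walk is chordless with the correct parity, which requires carefully tracking which recolorings are forced. If this becomes unwieldy, my fallback is induction on $\sum_{E}|E|$. I would delete a vertex from an edge of size at least $3$, or handle edges of size $2$ via bipartiteness of the graph they form, which holds because such edges cannot form odd cycles. I would then repair the coloring inherited from the smaller hypergraph using the same alternating-path argument.
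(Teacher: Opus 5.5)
The paper gives no proof of this statement. It quotes Berge's theorem and uses it as a black box in Lemma~\ref{lemma:proper2coloring}, so your argument has to stand on its own. Your sufficiency direction is correct, and so is the reduction to showing that every balanced hypergraph has a proper $2$-coloring. (Strictly, an odd square submatrix with two $1$'s per row and column is a disjoint union of cycle blocks, one of which has odd order, but this is harmless.) The necessity direction, however, is the entire content of Berge's theorem, and your main step does not establish it. Your decisive claim is that a chord in a shortest odd closed walk ``would split the walk into a shorter odd one''. This is false for hypergraphs. Let $x_1,E_1,\ldots,x_k,E_k,x_1$ be a cycle and let $x_j\in E_i$ with $j\notin\{i,i+1\}$. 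The two cycles obtained by using $E_i$ in both have lengths summing to $k+1$, not $k+2$ as for a graph chord, so for odd $k$ both can be even. For instance, the hypergraph with edges $\{a,b,c\}$, $\{b,c\}$, $\{a,c\}$ is balanced. Its odd cycle $a,\{a,b,c\},b,\{b,c\},c,\{a,c\},a$ has the chord $c\in\{a,b,c\}$, and no shorter odd cycle exists. So minimality of length cannot by itself give chordlessness; this is precisely the difference between being balanced and having no odd cycles.

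Suppose you fix one coloring $c$ and use the structure of your walk: each traversed edge $F_i$ has $u_i$ as its only vertex of color $c(u_i)$, and the walk closes through the monochromatic edge $E_0$. Then chords into $E_0$, and forward chords $u_j\in F_i$ with $j>i+1$, can be short-cut while keeping the length odd. A backward chord $u_j\in F_i$ with $j<i$, however, only produces even cycles, and nothing in your sketch rules it out. There are two further problems. First, your process recolors as it goes, so ``the colors alternate along the sequence'' refers to no fixed coloring. Second, nothing forces the walk to return to $E_0$ at all rather than cycle among other edges, yet the parity argument needs exactly such a return under a single coloring. Your fallback has the same difficulty in another form. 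Removing a vertex from an edge can destroy balancedness: removing $c$ from $\{a,b,c\}$ above creates the odd triangle $\{a,b\},\{b,c\},\{a,c\}$. So the induction hypothesis does not apply unless you prove that a suitable vertex can always be chosen, and the subsequent repair relies on the same unproved exchange argument. As it stands, the hard direction is unproved. You need either Berge's original argument or an augmenting-path argument over a fixed coloring that genuinely controls backward chords.
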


\section{Characterizations and recognition of distance-hereditary graphs}\label{sec:dh-graphs}

Distance-hereditary graphs were introduced by Howorka~\cite{MR485544}, who also proved that they are perfect. In this section, we recall some characterizations and recognition results for this class. We first state standard characterizations in terms of one-vertex extensions and forbidden induced subgraphs. We then recall the characterization of Chang et al.\ \cite{MR1651039} in terms of $\OVE$-trees, which provides the recursive representation used throughout the rest of the paper.

\begin{theorem}[\cite{MR859310,MR1055593}] \label{thm:DHequivDefs} For a graph $G$, the following assertions are equivalent:
\begin{enumerate}[(i)]
\item $G$ is distance-hereditary;
\item \label{item:DHrecursiveDef1} the vertices of $G$ admit an ordering $v_1,\ldots,v_n$ such that, for each $i\in\{2,\ldots,n\}$, the graph $G[\{v_1,\ldots,v_i\}]$ arises from $G[\{v_1,\ldots,v_{i-1}\}]$ by adding $v_i$ as a pendant vertex, as a true twin of one of its vertices, or as a false twin of one of its vertices;
\item $G$ contains no induced house, gem, or domino (shown in Figure~\ref{fig:forbiddenDH}) and has no hole of length at least $5$.
\end{enumerate}
\end{theorem}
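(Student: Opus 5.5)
The plan is to prove the cycle (i)$\Rightarrow$(iii)$\Rightarrow$(ii)$\Rightarrow$(i). Throughout, (i) means distance-hereditary, (ii) means the pendant/twin ordering exists, and (iii) means the forbidden-subgraph description.

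\emph{(ii)$\Rightarrow$(i).} I would argue by induction on $n$. It suffices to show that if $G'$ is distance-hereditary and $G$ arises from $G'$ by adding $v$ as a pendant vertex, a true twin or a false twin, then $G$ is distance-hereditary. I would use the equivalent formulation that every induced path is a shortest path. Take an induced path $P$ of $G$ containing $v$.
\begin{itemize}
\item If $v$ is pendant, then $v$ is an endpoint of $P$. Removing it gives an induced path of $G'$, which is shortest there. Every path between the endpoints must pass through the unique neighbour of $v$, so $P$ is shortest in $G$.
\item If $v$ is a twin of $u$, then $P$ cannot contain both $u$ and $v$, unless $P=u,v$ for true twins, or $P$ has the form $u,w,v$ for false twins. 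These short cases are trivially shortest.
\item Otherwise, replacing $v$ by $u$ yields an induced path of $G'$ of the same length and with corresponding endpoints. Distances in $G$ agree with those in $G'$ under this substitution, so $P$ is again shortest.
\end{itemize}

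\emph{(i)$\Rightarrow$(iii).} This is direct. In the house, gem, domino and any hole $C_k$ with $k\ge5$, one can exhibit an induced path between two vertices that is longer than their distance. For example, in $C_k$ take the long arc between two vertices at distance $2$. Since distance-hereditary graphs are hereditary, none of these can occur as induced subgraphs.

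\emph{(iii)$\Rightarrow$(ii).} This is the main obstacle. I would prove that every graph $G$ with at least two vertices satisfying (iii) has a pendant vertex or a pair of twins. The class in (iii) is hereditary, so deleting that vertex and inducting then produces the ordering.
\begin{itemize}
\item We may assume $G$ is connected: otherwise apply the claim to a component, and an isolated vertex would also be handled in this way.
\item Fix a vertex $x$ and take the distance layers $L_0=\{x\},L_1,\dots,L_k$.
\item Consider the last layer $L_k$ and a connected component $C$ of $G[L_k]$. Using the absence of house, domino and long holes, I would show that all vertices of $C$ have the same neighbourhood in $L_{k-1}$. Otherwise two neighbours in $L_{k-1}$, joined back towards $x$ through earlier layers, would create a hole of length $\ge5$ or a house/domino.
\item Using gem-freeness, I would show that $G[C]$ is a cograph.
\end{itemize}
A cograph with at least two vertices has a pair of twins. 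Since vertices of $C$ share their neighbourhoods outside $C$, twins within $C$ are twins in $G$. If instead $|C|=1$ for every component and $L_k$ is a single vertex adjacent to one vertex of $L_{k-1}$, that vertex is pendant. If it has several neighbours, I would compare with other vertices of $L_k$ or $L_{k-1}$ to locate twins.

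The delicate part is the layer argument, namely that neighbourhoods in $L_{k-1}$ coincide and that $C$ is $P_4$-free. This is the standard Bandelt--Mulder argument, and I would follow it, doing the careful case analysis there.
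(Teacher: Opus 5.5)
Your arguments for (ii)$\Rightarrow$(i) and (i)$\Rightarrow$(iii) are correct. The paper does not prove this theorem; it quotes it from Bandelt--Mulder and Hammer--Maffray, so the comparison is with the standard proof. The gap is in (iii)$\Rightarrow$(ii), which carries the whole difficulty of the theorem. There your outline contains a false intermediate claim and omits the idea needed for one of the cases.

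\textbf{The layer claim.} You say that the vertices of a component $C$ of $G[L_k]$ have the same neighbourhood in $L_{k-1}$ because houses, dominoes and long holes are absent. That does not follow, as the gem shows. Take the gem with induced path $r,p,a,b$ and universal vertex $q$, which contains none of the other obstructions. Choosing $x=r$ gives $L_1=\{p,q\}$ and $L_2=\{a,b\}$ with $ab$ an edge, yet $a$ is adjacent to $p$ and $b$ is not. So gem-freeness must be used here, not only for the cograph step.

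More seriously, ``joining back towards $x$'' is not a one-shot argument. Suppose $p\in L_{k-1}$ is adjacent to $a$ but not to $b$, and $q\in L_{k-1}$ is adjacent to $b$. To close a short forbidden configuration you need $p$ and $q$ to have a common neighbour in $L_{k-2}$. That is the same statement one level down, for vertices connected only through higher levels, i.e.\ within components of $G[L_i\cup L_{i+1}\cup\cdots]$. Without it, the two geodesics back to $x$ can be long and carry chords between equal or consecutive layers, and no hole, house or domino appears immediately. The clean way to get this level property is from distance-heredity itself: a geodesic from $x$ to $p$ followed by an induced $p$--$b$ path through the component is an induced path longer than $d(x,b)$. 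So in effect your route needs (iii)$\Rightarrow$(i) first, which is exactly the hard step you deferred.

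\textbf{The independent last layer.} The case where $L_k$ is independent is not handled by ``comparing with other vertices''. What is needed:
\begin{enumerate}[(a)]
\item Two vertices of $L_k$ with equal neighbourhoods are false twins.
\item Otherwise, show the sets $N(u)$, $u\in L_k$, are pairwise nested or disjoint. Using a common neighbour in $L_{k-2}$, a violation yields a $C_5$, a domino, a house or a gem.
\item Pick $u\in L_k$ with $N(u)$ inclusion-minimal. If $\vert N(u)\vert=1$, then $u$ is pendant.
\item Otherwise $G[N(u)]$ is a cograph, since $u$ together with an induced $P_4$ in $N(u)$ would be a gem. So $G[N(u)]$ has twins $p,q$.
\item These are twins of $G$. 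They have the same neighbours in $L_{k-2}$ by the level property, and in $L_k$ by minimality plus nestedness. They also agree on $L_{k-1}-N(u)$: a vertex $r$ adjacent to $p$ but not $q$, with a common neighbour $s\in L_{k-2}$ of $p,q,r$, gives a gem on $\{s,r,p,q,u\}$ if $pq$ is an edge and a house otherwise.
\end{enumerate}
Until both points are supplied, the existence of a pendant vertex or a pair of twins, and hence (iii)$\Rightarrow$(ii), is not established.
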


\begin{figure}
    \centering
    \begin{subfigure}[b]{0.3\textwidth}
            \centering
                \begin{tikzpicture}[scale=0.35, rotate=45]
                    \centering
                    \SetVertexNoLabel
                    \GraphInit[vstyle=Classic]
                    \grCycle{4}
                    \EA[unit=4](a1){a4}
                    \Edges(a1, a4, a0)
        \end{tikzpicture}
        \caption*{house}
    \end{subfigure}
    \begin{subfigure}[b]{0.3\textwidth}
            \centering
                \begin{tikzpicture}[scale=0.8]
                    \centering
                    \SetVertexNoLabel
                    \GraphInit[vstyle=Classic]
                    \Vertex[x=0, y=0]{a0}
                    \Vertex[x=-2, y=1.7]{a1}
                    \Vertex[x=-1, y=3]{a2}
                    \Vertex[x=1, y=3]{a3}
                    \Vertex[x=2, y=1.7]{a4}
                    \Edges(a0, a1, a2, a3, a4, a0)
                    \Edges(a2, a0, a3)
        \end{tikzpicture}
        \caption*{gem}
    \end{subfigure}
    \begin{subfigure}[b]{0.3\textwidth}
            \centering
                \begin{tikzpicture}[scale=0.7, rotate=90]
                    \centering
                    \SetVertexNoLabel
                    \GraphInit[vstyle=Classic]
                    \grLadder[RA=2,RS=2]{3}
        \end{tikzpicture}
        \caption*{domino}
    \end{subfigure}
    \caption{The house, the gem, and the domino.}
    \label{fig:forbiddenDH}
\end{figure}

Chang et al.~\cite{MR1651039} gave another characterization of distance-hereditary graphs in terms of one-vertex-extension trees. The characterization is based on the following recursively defined trees.

\begin{definition}[\cite{MR1651039}]\label{def:OVE-tree} A \emph{one-vertex-extension tree}, or \emph{$\OVE$-tree}, is a finite rooted ordered tree whose edges are labeled in $\{\flabel,\tlabel,\plabel\}$ and for which exactly one of the following statements holds:
\begin{enumerate}[(i)]
\item\label{it:OVE1} $T$ has exactly one vertex.
\item\label{it:OVE2} There exist vertex-disjoint $\OVE$-trees $T_1$ and $T_2$ and a label $\alpha\in\{\flabel,\tlabel,\plabel\}$ such that $T$ is obtained from the disjoint union of $T_1$ and $T_2$ by joining the root of $T_1$ to the root of $T_2$ with an edge labeled $\alpha$, taking the root of $T_1$ as the root of $T$, making the root of $T_2$ the first child of the root of $T_1$ and keeping all other child orders as in $T_1$ and $T_2$.
\end{enumerate}
When \eqref{it:OVE2} holds, we write $T=T_1\oplus_\alpha T_2$.\end{definition}

We associate with each $\OVE$-tree $T$ a graph $\DHG(T)$ and a vertex set $\TS(T)$.

\begin{definition}[\cite{MR1651039}]\label{def:OVE-graph} Let $T$ be an $\OVE$-tree. We define $\DHG(T)$ and $\TS(T)$ recursively as follows:
\begin{enumerate}[(i)]
\item If $T$ has exactly one vertex $v$, then $\DHG(T)$ is the graph with vertex set $\{v\}$ and no edges and $\TS(T)=\{v\}$.
\item If $T=T_1\oplus_{\flabel} T_2$, then $\DHG(T)=\DHG(T_1)\cup\DHG(T_2)$ and $\TS(T)=\TS(T_1)\cup\TS(T_2)$.
\item If $T=T_1\oplus_{\tlabel} T_2$, then $\DHG(T)$ is the graph that arises from $\DHG(T_1)\cup\DHG(T_2)$ by adding all edges $uv$ with $u\in\TS(T_1)$ and $v\in\TS(T_2)$ and $\TS(T)=\TS(T_1)\cup\TS(T_2)$.
\item If $T=T_1\oplus_{\plabel} T_2$, then $\DHG(T)$ is the graph that arises from $\DHG(T_1)\cup\DHG(T_2)$ by adding all edges $uv$ with $u\in\TS(T_1)$ and $v\in\TS(T_2)$ and $\TS(T)=\TS(T_1)$.
\end{enumerate}
The graph $\DHG(T)$ is called the \emph{graph associated with $T$} and the set $\TS(T)$ is called the \emph{twin-set associated with $T$}. If $T=T_1\oplus_\alpha T_2$, we say that $\DHG(T)$ arises from $\DHG(T_1)$ and $\DHG(T_2)$ by a \emph{false twin operation} if $\alpha=\flabel$, by a \emph{true twin operation} if $\alpha=\tlabel$, and by a \emph{pendant vertex operation} if $\alpha=\plabel$.\end{definition}

\begin{remark}\label{rmk:TS-path} Let $T$ be an $\OVE$-tree and let $G=\DHG(T)$. Notice that $V(G)=V(T)$. Moreover, if $T=T_1\oplus_\alpha T_2$ with $\alpha\in\{\flabel,\tlabel,\plabel\}$ and $G_i=\DHG(T_i)$ for each $i\in\{1,2\}$, then \[ G_i=G[V(T_i)]\quad\text{for each } i\in\{1,2\}. \] Also, the root of $T$ belongs to $\TS(T)$.\end{remark}

Chang et al.\ characterized distance-hereditary graphs as the graphs associated with $\OVE$-trees.

\begin{theorem}[\cite{MR1651039}]\label{thm:NewDefinitionDH} A graph $G$ is distance-hereditary if and only if $G$ is isomorphic to $\DHG(T)$ for some $\OVE$-tree $T$.\end{theorem}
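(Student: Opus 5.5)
The plan is to derive the statement from the equivalence between~(i) and~\eqref{item:DHrecursiveDef1} in Theorem~\ref{thm:DHequivDefs}, proving each direction by induction on the number of vertices. The tool used in both directions is one local observation about the recursion of Definition~\ref{def:OVE-tree}. Every $\OVE$-tree $T$ is built by a finite sequence of $\oplus$ operations starting from single-vertex trees. Consider a step in which a single-vertex tree $\{w\}$ is combined as $\{w\}\oplus_\alpha\{u\}$. This happens exactly when $u$ is the \emph{last} child of $w$ and $u$ is a leaf of $T$.

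\emph{Key lemma (local replacement).} Let $T$ be an $\OVE$-tree, let $w\in V(T)$, and let $T'$ be obtained from $T$ by replacing the base tree $\{w\}$ in the recursion with $\{w\}\oplus_\alpha\{u\}$, where $u$ is a new vertex. Then $T'$ is an $\OVE$-tree in which $u$ is a leaf and the last child of $w$, and $\DHG(T')-u=\DHG(T)$. Moreover, in $\DHG(T')$ the vertex $u$ is:
\begin{itemize}
\item a false twin of $w$ if $\alpha=\flabel$;
\item a true twin of $w$ if $\alpha=\tlabel$;
\item a pendant vertex adjacent to $w$ if $\alpha=\plabel$.
\end{itemize}
I will prove this by induction along the recursion of Definition~\ref{def:OVE-graph}, carrying the following invariant for every intermediate subtree $S$ containing $w$, with modified version $S'$:
\begin{itemize}
\item $\DHG(S')-u=\DHG(S)$;
\item $\TS(S')\setminus\{u\}=\TS(S)$;
\item if $\alpha\in\{\flabel,\tlabel\}$, then $u\in\TS(S')$ if and only if $w\in\TS(S')$;
\item if $\alpha=\plabel$, then $u\notin\TS(S')$.
\end{itemize}
At the base, $\{w\}\oplus_\alpha\{u\}$ has the right graph: an edge $wu$ for $\tlabel$ and $\plabel$, and no edge for $\flabel$. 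Its twin-set is $\{w,u\}$ for $\flabel,\tlabel$ and $\{w\}$ for $\plabel$.

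The inductive step uses only the fact that each later operation adds edges between twin-sets and forms the new twin-set as a union of twin-sets or as $\TS(T_1)$. Hence $u$ gains exactly the edges $w$ gains in the twin cases, and gains no edges in the pendant case. Ordering is preserved because the root of $T_2$ is always inserted as the \emph{first} child, so the child attached at the innermost step ends up last.

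\emph{($\Leftarrow$)} Let $T$ have at least two vertices. Pick a non-leaf vertex $v$ of maximum depth, so all its children are leaves, and let $u$ be the last child of $v$. Since $u$ is the last child, in the recursion $u$ was attached to $v$ when $v$ was still a single-vertex tree. That is, $T$ arises from an $\OVE$-tree $T^-$ on $V(T)\setminus\{u\}$ by the local replacement at $v$. By the key lemma, $\DHG(T^-)=\DHG(T)-u$, and $u$ is a pendant vertex or a twin of $v$ in $\DHG(T)$. By induction $\DHG(T^-)$ has an ordering as in~\eqref{item:DHrecursiveDef1}; appending $u$ extends it, so $\DHG(T)$ is distance-hereditary by Theorem~\ref{thm:DHequivDefs}.

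\emph{($\Rightarrow$)} Take an ordering $v_1,\ldots,v_n$ as in~\eqref{item:DHrecursiveDef1}. Start from the one-vertex tree $\{v_1\}$. When $v_i$ is added as a pendant vertex, true twin, or false twin of some $w$, apply the local replacement at $w$ with the label $\plabel$, $\tlabel$, or $\flabel$ respectively. The key lemma shows that the resulting tree $T_i$ satisfies $\DHG(T_i)=G[\{v_1,\ldots,v_i\}]$.

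The main obstacle I expect is making the "replace the base tree $\{w\}$" operation rigorous. A decomposition $T=T_1\oplus_\alpha T_2$ is determined by the tree: the root's first child and the edge to it. Unfolding this repeatedly gives a canonical recursion. I would state explicitly that in this canonical recursion $w$ occurs as a single-vertex base tree exactly once, namely as $\{w\}\oplus$ its last child if it has children, or as itself otherwise. Then I check that inserting a new last child of $w$ changes only that base step. This bookkeeping is tedious but straightforward, and the invariant above then does the rest.
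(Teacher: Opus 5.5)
The paper gives no proof of this statement. It quotes the result from Chang et al.\ \cite{MR1651039}, and the only hint of the argument is the remark in Section~\ref{sec:dh-graphs} that their algorithm turns an ordering as in Theorem~\ref{thm:DHequivDefs}\eqref{item:DHrecursiveDef1} into an $\OVE$-tree. Your proposal is a correct, self-contained derivation along exactly those lines. It reduces the statement to the equivalence of (i) and \eqref{item:DHrecursiveDef1} in Theorem~\ref{thm:DHequivDefs}.

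Your invariant for the local replacement holds in every case of Definition~\ref{def:OVE-graph}. The vertices $u$ and $w$ always lie in the same operand of each later $\oplus$. Hence the status of the edge $uw$ never changes. In the twin cases, any vertex that becomes adjacent to one of them also becomes adjacent to the other. In the $\plabel$ case, $u$ never re-enters a twin-set. You also put the new vertex in the right place: it must be the \emph{last} child of $w$, because the innermost $\oplus$ handles the last child. Appending it as the first child changes the graph. For example, take $v_1$, then a true twin $v_2$ of $v_1$, then a pendant vertex $v_3$ at $v_1$. The wrong child order gives a triangle instead of $P_3$.

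Two points should be written out explicitly.

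First, every finite rooted ordered tree with edge labels in $\{\flabel,\tlabel,\plabel\}$ is an $\OVE$-tree. For such a tree with at least two vertices, the decomposition $T=T_1\oplus_\alpha T_2$ is forced: $T_2$ is the subtree rooted at the first child of the root. This resolves your `main obstacle' at once:
\begin{itemize}
\item you can define $T'$ simply as $T$ with $u$ added as a new last child of $w$;
\item you can prove the key lemma, including the twin-set bullets, by induction on $\vert V(T)\vert$ through this decomposition;
\item you get for free that $T-u$ in the ($\Leftarrow$) direction is an $\OVE$-tree.
\end{itemize}

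Second, in the ($\Rightarrow$) direction, the equality $\DHG(T_i)=G[\{v_1,\ldots,v_i\}]$ needs a one-line check. Both graphs coincide after deleting $v_i$. In both, $N(v_i)$ equals $\{w\}$, $N(w)$, or $N(w)\cup\{w\}$, with $N(w)$ computed in the common graph on $\{v_1,\ldots,v_{i-1}\}$.
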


If $G=\DHG(T)$ for an $\OVE$-tree $T$, we say that $T$ \emph{is an $\OVE$-tree of $G$}. Hammer and Maffray~\cite{MR1055593} proposed a linear-time algorithm to produce an ordering satisfying Theorem~\ref{thm:DHequivDefs}\eqref{item:DHrecursiveDef1}. Damiand et al.~\cite{MR1846920} later reported a counterexample to the algorithm proposed in~\cite{MR1055593} and gave a corrected linear-time algorithm producing such an ordering, in the equivalent form of a pruning sequence. Chang et al.~\cite{MR1651039} gave a linear-time algorithm that, from such an ordering, builds an $\OVE$-tree of $G$. Thus, the result below follows.

\begin{theorem}[\cite{MR1651039,MR1846920,MR1055593}]\label{thm:AlgoRecognitionDH} There is a linear-time algorithm that, given a graph $G$, decides whether $G$ is distance-hereditary and, if so, returns an $\OVE$-tree of $G$.\end{theorem}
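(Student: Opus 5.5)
The statement immediately above is Theorem~\ref{thm:AlgoRecognitionDH}. It is a composition of cited algorithmic results, so the plan is to chain them and check that every step runs in linear time.

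Given $G$ with $n$ vertices and $m$ edges, we first run the corrected algorithm of Damiand et al.~\cite{MR1846920}. In $O(n+m)$ time it either certifies that $G$ is not distance-hereditary, or returns a pruning sequence. A pruning sequence encodes exactly an ordering $v_1,\ldots,v_n$ as in Theorem~\ref{thm:DHequivDefs}\eqref{item:DHrecursiveDef1}: each $v_i$ is recorded as a pendant vertex, a true twin, or a false twin of some earlier vertex $v_j$, together with the index $j$. By Theorem~\ref{thm:DHequivDefs}, such an ordering exists if and only if $G$ is distance-hereditary, so the decision part is already settled at this stage. The original proposal of Hammer and Maffray~\cite{MR1055593} is cited only for historical reasons, since its flaw is repaired in~\cite{MR1846920}.

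In the distance-hereditary case, we then feed the ordering, including for each $i\ge 2$ the type of operation and the witness vertex $v_j$, to the procedure of Chang et al.~\cite{MR1651039}. That procedure builds an $\OVE$-tree $T$ with $\DHG(T)=G$, taking $O(1)$ work per vertex.

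The point that needs care is the interface between the two algorithms. The output of~\cite{MR1846920} has to be converted, in linear time, into the exact input format expected by~\cite{MR1651039}, namely a sequence of one-vertex extensions with their attachment vertex and type. A pruning sequence is read in reverse to obtain the construction ordering, and both describe the same data, so the conversion costs $O(n)$.

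The correctness of the resulting $\OVE$-tree, that is, that $\DHG(T)$ equals $G$ and not merely a graph isomorphic to it with a relabeling, follows from Theorem~\ref{thm:NewDefinitionDH} and the construction in~\cite{MR1651039}. That construction keeps $V(T)=V(G)$, as noted in Remark~\ref{rmk:TS-path}. Summing the costs, the total running time is $O(n+m)$.
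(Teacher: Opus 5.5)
Your proposal is correct and follows the paper's justification: run the corrected linear-time algorithm of Damiand et al.\ to decide distance-heredity and obtain a pruning sequence (equivalently, an ordering as in Theorem~\ref{thm:DHequivDefs}\eqref{item:DHrecursiveDef1}), then apply the linear-time construction of Chang et al.\ to build an $\OVE$-tree from that ordering. One small point: Theorem~\ref{thm:NewDefinitionDH} only gives existence of an $\OVE$-tree up to isomorphism, so the fact that the tree you build satisfies $\DHG(T)=G$ rests on the correctness of Chang et al.'s construction itself, not on that theorem.
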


\section[Balanced graphs and proper 2-colorings]{Balanced graphs and proper $2$-colorings}\label{sec:bicolorings}

In this section, we first derive from Berge's bicoloring characterization of balanced hypergraphs a bicoloring characterization of balanced graphs (Lemma~\ref{lemma:proper2coloring}). We then adapt this characterization to the case of graphs with false twins, showing that it suffices to consider only subsets containing one of the false twins and not the other (Lemma~\ref{lemma:WconVynoV'}). These characterizations are the main tools used to prove balancedness throughout the paper.

If $G$ is a graph and $W\subseteq V(G)$, we define a \emph{proper $2$-coloring of $W$ with respect to $G$} to be a proper $2$-coloring of the induced subhypergraph $\mathcal{C}(G)_W$. Equivalently, it is a $2$-coloring $\phi$ of $W$ such that, for every maximal clique $Q$ of $G$ with $\vert Q\cap W\vert\geq 2$, $Q\cap W$ is not monochromatic under $\phi$.

\begin{lemma} \label{lemma:proper2coloring} A graph $G$ is balanced if and only if, for every $W\subseteq V(G)$, there is a proper $2$-coloring of $W$ with respect to $G$.\end{lemma}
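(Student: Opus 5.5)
This is a direct translation of Theorem~\ref{thm:balancedhypergraphs} to clique hypergraphs, so the plan is to chain the definitions together.

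First, by the remark in Section~\ref{sec:prelim}, a clique-matrix of $G$ is an incidence matrix of $\mathcal{C}(G)$. Hence $G$ is balanced if and only if the hypergraph $\mathcal{C}(G)$ is balanced.

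Next we check that $\mathcal{C}(G)$ really is a hypergraph in the sense of the definition. Its edges, the maximal cliques, are nonempty. Their union is $V(G)$, since every vertex lies in some maximal clique; this holds for isolated vertices too, which form singleton maximal cliques. If $V(G)=\emptyset$ the statement is trivial, so we may assume $G$ is nonempty. We can therefore apply Theorem~\ref{thm:balancedhypergraphs} to $H=\mathcal{C}(G)$. It gives that $\mathcal{C}(G)$ is balanced if and only if, for every $S\subseteq V(G)$, the induced subhypergraph $\mathcal{C}(G)_S$ admits a proper $2$-coloring.

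By definition, a proper $2$-coloring of $W$ with respect to $G$ is exactly a proper $2$-coloring of $\mathcal{C}(G)_W$. Taking $S=W$ then yields the lemma.

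The only point requiring care is the equivalence of the two formulations of properness. The edges of $\mathcal{C}(G)_W$ are precisely the nonempty sets $Q\cap W$ with $Q$ a maximal clique. Such an edge has at least two vertices exactly when $\vert Q\cap W\vert\ge 2$. So "no edge with at least two vertices is monochromatic" says the same thing as "$Q\cap W$ is not monochromatic whenever $\vert Q\cap W\vert\ge2$".

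There is also a degenerate case, $W=\emptyset$. Here the empty coloring works vacuously, and $\mathcal{C}(G)_\emptyset$ has no edges, so both sides agree however Berge's theorem treats the empty set.
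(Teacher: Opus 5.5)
Your proposal is correct and follows the paper's own route: you identify the balancedness of $G$ with that of $\mathcal{C}(G)$, apply Berge's Theorem~\ref{thm:balancedhypergraphs}, and observe that proper $2$-colorings of $\mathcal{C}(G)_W$ are exactly proper $2$-colorings of $W$ with respect to $G$. The paper leaves implicit your extra checks that $\mathcal{C}(G)$ is a genuine hypergraph and that the case $W=\emptyset$ causes no trouble; including them is harmless.
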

\begin{proof} By definition, $G$ is balanced if and only if $\mathcal{C}(G)$ is balanced. By Theorem~\ref{thm:balancedhypergraphs}, this is equivalent to saying that, for every $W\subseteq V(G)$, the induced subhypergraph $\mathcal{C}(G)_W$ admits a proper $2$-coloring. It remains only to observe that a proper $2$-coloring of $\mathcal{C}(G)_W$ is exactly a proper $2$-coloring of $W$ with respect to $G$.\end{proof}

We now adapt the preceding characterization to graphs with false twins.

\begin{lemma}\label{lemma:WconVynoV'} Let $G$ be a graph and let $v$ and $v'$ be false twins of $G$. The graph $G$ is balanced if and only if every set $W\subseteq V(G)$ such that $v\in W$ and $v'\notin W$ admits a proper $2$-coloring with respect to $G$.\end{lemma}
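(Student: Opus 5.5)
The forward direction is immediate from Lemma~\ref{lemma:proper2coloring}: if $G$ is balanced, then every $W\subseteq V(G)$ admits a proper $2$-coloring with respect to $G$, in particular those with $v\in W$ and $v'\notin W$. For the converse, I will assume that every $W$ with $v\in W$ and $v'\notin W$ admits a proper $2$-coloring, and verify the condition of Lemma~\ref{lemma:proper2coloring} for an arbitrary $W\subseteq V(G)$. The plan is to reduce the three remaining cases for $W$ to the assumed one, using that false twins lie in exactly the same maximal cliques up to swapping one for the other.

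\emph{Key fact.} Since $v$ and $v'$ are nonadjacent with $N_G(v)=N_G(v')$, no maximal clique contains both. Moreover, $Q\mapsto (Q-\{v\})\cup\{v'\}$ is a bijection between the maximal cliques containing $v$ and those containing $v'$. Maximality is preserved because any vertex extending the image clique would also extend $Q$, as the two twins have the same neighbourhood.

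\emph{Case $v'\in W$ and $v\notin W$.} The transposition of $v$ and $v'$ is an automorphism of $G$. It maps $W$ to a set $W^*$ with $v\in W^*$ and $v'\notin W^*$. A proper coloring of $W^*$ transports back through the automorphism to a proper coloring of $W$.

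\emph{Case $v,v'\notin W$.} Let $W'=W\cup\{v\}$, which is in the assumed situation, and take a proper coloring $\phi$ of $W'$. Restrict $\phi$ to $W$. Every maximal clique $Q$ with $v\notin Q$ has $Q\cap W=Q\cap W'$, so it is fine. For a maximal clique $Q\ni v$ with $|Q\cap W|\ge 2$, the set $Q\cap W'$ is not monochromatic. However, dropping $v$ might leave $Q\cap W$ monochromatic, so restriction alone may fail. I therefore argue differently: the clique $Q'=(Q-\{v\})\cup\{v'\}$ is maximal and does not contain $v$, and $Q'\cap W'=Q\cap W$ because $v'\notin W'$. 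Since $|Q'\cap W'|\ge2$, this set is non-monochromatic under $\phi$. Hence the restriction of $\phi$ to $W$ is proper.

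\emph{Case $v,v'\in W$.} Let $W''=W-\{v'\}$, take a proper coloring $\phi$ of $W''$, and extend it by $\phi(v')=\phi(v)$. Cliques containing neither twin are unaffected. A maximal clique $Q\ni v'$ corresponds to $Q^*=(Q-\{v'\})\cup\{v\}$, and $Q\cap W$ is obtained from $Q^*\cap W''$ by replacing $v$ with $v'$, which carries the same color. Thus $Q\cap W$ has the same color multiset as $Q^*\cap W''$ and the same size, so it is non-monochromatic whenever its size is at least $2$. Cliques containing $v$ do not contain $v'$, so for them $Q\cap W=Q\cap W''$.

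The only delicate step is the case $v,v'\notin W$, where naive restriction fails and the twin clique $Q'$ must be used. The case analysis is otherwise routine.
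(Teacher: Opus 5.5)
Your proof is correct and follows essentially the same argument as the paper. It uses the same three-way case split, the same auxiliary sets $W\cup\{v\}$ and $W-\{v'\}$, and the same twin-clique swap $Q\mapsto(Q-\{v\})\cup\{v'\}$; the only difference is that for $v\notin W$, $v'\in W$ you use the automorphism transposing $v$ and $v'$, a tidy shortcut for the paper's explicit clique-by-clique check.
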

\begin{proof} The `only if' implication follows immediately from Lemma~\ref{lemma:proper2coloring}. To prove the `if' implication, assume that each subset $W\subseteq V(G)$ such that $v\in W$ and $v'\notin W$ admits a proper $2$-coloring with respect to $G$. We prove that, as a consequence, such a coloring also exists for every subset $W\subseteq V(G)$ such that $v\notin W$ or $v'\in W$. Let $W$ be such a subset. We consider the three possible cases.
\resetproofcases
\proofcase{$v\notin W$ and $v'\in W$} Let $W_0=(W-\{v'\})\cup\{v\}$. Since $v\in W_0$ and $v'\notin W_0$, our assumption yields a proper $2$-coloring $\phi_0$ of $W_0$ with respect to $G$. Let $\phi$ be the $2$-coloring of $W$ such that $\phi\vert_{W-\{v'\}}=\phi_0\vert_{W-\{v'\}}$ and $\phi(v')=\phi_0(v)$. Let $Q$ be a maximal clique of $G$ such that $\vert Q\cap W\vert\geq 2$. Suppose first that $Q$ contains neither $v$ nor $v'$. Thus, $Q\cap W=Q\cap W_0$ and this set has at least two vertices. Since $\phi_0$ is proper and $\phi$ agrees with $\phi_0$ on $Q\cap W$, $Q\cap W$ is not monochromatic under $\phi$. Suppose now that $v'\in Q$. Let $Q_0=(Q-\{v'\})\cup\{v\}$. Since $v$ and $v'$ are false twins in $G$, $Q_0$ is a maximal clique of $G$. Since $Q_0\cap W_0$ is obtained from $Q\cap W$ by replacing $v'$ with $v$, we have $\vert Q_0\cap W_0\vert=\vert Q\cap W\vert\geq 2$. By the properness of $\phi_0$, $Q_0\cap W_0$ is not monochromatic under $\phi_0$. By the definition of $\phi$, this implies that $Q\cap W$ is not monochromatic under $\phi$. Suppose finally that $v\in Q$. Let $Q_0=(Q-\{v\})\cup\{v'\}$. Since $v$ and $v'$ are false twins in $G$, $Q_0$ is a maximal clique of $G$ and $Q_0\cap W_0=Q\cap W$. Hence, by the properness of $\phi_0$ and since $\phi$ and $\phi_0$ agree on $Q\cap W$, $Q\cap W$ is not monochromatic under $\phi$. Therefore, $\phi$ is a proper $2$-coloring of $W$ with respect to $G$.

\proofcase{$v\in W$ and $v'\in W$} Let $W_0=W-\{v'\}$. Since $v\in W_0$ and $v'\notin W_0$, our assumption yields a proper $2$-coloring $\phi_0$ of $W_0$ with respect to $G$. Let $\phi$ be the $2$-coloring of $W$ such that $\phi\vert_{W_0}=\phi_0$ and $\phi(v')=\phi_0(v)$. Let $Q$ be a maximal clique of $G$ such that $\vert Q\cap W\vert\geq 2$. Suppose first that $v'\notin Q$. Thus, $Q\cap W=Q\cap W_0$ and this set has at least two vertices. Since $\phi_0$ is proper and $\phi$ agrees with $\phi_0$ on $Q\cap W$, $Q\cap W$ is not monochromatic under $\phi$. Suppose otherwise that $v'\in Q$ and let $Q_0=(Q-\{v'\})\cup\{v\}$. Since $v$ and $v'$ are false twins in $G$, $Q_0$ is a maximal clique of $G$. Since $Q_0\cap W_0$ is obtained from $Q\cap W$ by replacing $v'$ with $v$, we have $\vert Q_0\cap W_0\vert=\vert Q\cap W\vert\geq 2$. By the properness of $\phi_0$, $Q_0\cap W_0$ is not monochromatic under $\phi_0$. Since $\phi$ agrees with $\phi_0$ on $(Q\cap W)-\{v'\}$ and $\phi(v')=\phi_0(v)$, $Q\cap W$ is not monochromatic under $\phi$. Therefore, $\phi$ is a proper $2$-coloring of $W$ with respect to $G$.

\proofcase{$v\notin W$ and $v'\notin W$} Let $W_0=W\cup\{v\}$. Since $v\in W_0$ and $v'\notin W_0$, our assumption yields a proper $2$-coloring $\phi_0$ of $W_0$ with respect to $G$. Let $\phi=\phi_0\vert_W$. Let $Q$ be a maximal clique of $G$ such that $\vert Q\cap W\vert\geq 2$. Suppose first that $v\notin Q$. Since $Q\cap W=Q\cap W_0$ and $\phi=\phi_0\vert_W$, the properness of $\phi_0$ implies that $Q\cap W$ is not monochromatic under $\phi$. Suppose otherwise that $v\in Q$ and let $Q_0=(Q-\{v\})\cup\{v'\}$. Since $v$ and $v'$ are false twins in $G$, $Q_0$ is a maximal clique of $G$. Since $Q_0\cap W_0=Q\cap W$ and $\phi=\phi_0\vert_W$, the properness of $\phi_0$ implies that $Q\cap W$ is not monochromatic under $\phi$. Therefore, $\phi$ is a proper $2$-coloring of $W$ with respect to $G$.

\medskip As we have proved that every subset $W\subseteq V(G)$ admits a proper $2$-coloring with respect to $G$, Lemma~\ref{lemma:proper2coloring} implies that $G$ is balanced. This completes the proof of the lemma.\end{proof}

\section{Structural characterization of balanced distance-hereditary graphs}\label{sec:dh-balanced}

In this section, we prove the main result of this work (Theorem~\ref{thm:DHbalancedGraphs}). The result states that, for distance-hereditary graphs, being balanced, being hereditary clique-Helly, and containing no induced $\overline{3K_2}$ are equivalent. Thus, within this class, balanced graphs are characterized by a single forbidden induced subgraph.

The `only if' implication of Theorem~\ref{thm:DHbalancedGraphs} is immediate. For the `if' implication, by Theorem~\ref{thm:NewDefinitionDH}, it suffices to consider graphs of the form $G=\DHG(T)$, where $T$ is an $\OVE$-tree. This allows us to prove this implication by induction on the definition of $\OVE$-trees. In the inductive step, when $T=T_1\oplus_\alpha T_2$ with $\alpha\in\{\tlabel,\plabel\}$, we express $G$ through Cunningham's composition for undirected graphs. To assess balancedness in these cases, we first prove Proposition~\ref{prop:ast-bal}, which characterizes when a graph obtained by this composition is balanced. We then prove Proposition~\ref{prop:TS-bal}, which provides assertions about the balancedness of the augmentations required when applying Proposition~\ref{prop:ast-bal}.

We recall Cunningham's composition for undirected graphs~\cite{MR655562}. Let $H_1$ and $H_2$ be graphs such that $V(H_1)\cap V(H_2)=\{v\}$. For each $i\in\{1,2\}$, let $G_i=H_i-v$ and $X_i=N_{H_i}(v)$. The \emph{composition of $H_1$ and $H_2$}, denoted $H_1\ast H_2$, is the graph that arises from $G_1\cup G_2$ by adding all edges with one endpoint in $X_1$ and the other in $X_2$.

Let $T=T_1\oplus_\alpha T_2$ with $\alpha\in\{\tlabel,\plabel\}$. Let $G=\DHG(T)$. For each $i\in\{1,2\}$, let $G_i=\DHG(T_i)$ and $X_i=\TS(T_i)$. If, for each $i\in\{1,2\}$, $H_i$ arises from $G_i$ by adding the same vertex $v$ with $N_{H_i}(v)=X_i$, then $G=H_1\ast H_2$.

Below, in Proposition~\ref{prop:ast-bal}, we characterize when $H_1\ast H_2$ is balanced. This result is relevant in two ways. First, it is key for proving balancedness in the cases of the proof of Theorem~\ref{thm:DHbalancedGraphs} where $T=T_1\oplus_\alpha T_2$ and $\alpha\in\{\tlabel,\plabel\}$. Second, Proposition~\ref{prop:ast-bal} is stated for arbitrary graphs $H_1$ and $H_2$ sharing a single common vertex $v$ and therefore gives a balancedness criterion for this composition beyond distance-hereditary graphs.

The proof of Proposition~\ref{prop:ast-bal} relies on the following lemma about the maximal cliques of $H_1\ast H_2$.

\begin{lemma}\label{lem:cliques-ast} Let $H_1$ and $H_2$ be graphs such that $V(H_1)\cap V(H_2)=\{v\}$. For each $i\in\{1,2\}$, let $G_i=H_i-v$ and $X_i=N_{H_i}(v)$. Suppose that $X_1$ and $X_2$ are both nonempty. If $Q$ is a maximal clique of $H_1\ast H_2$, then one of the following assertions holds:
\begin{enumerate}[(i)]
\item\label{it:Q1} $Q$ is a maximal clique of $G_i$ not contained in $X_i$, for some $i\in\{1,2\}$.
\item\label{it:Q2} $Q=R_1\cup R_2$, where $R_i$ is a maximal clique of $G_i[X_i]$ for each $i\in\{1,2\}$.
\end{enumerate}
\end{lemma}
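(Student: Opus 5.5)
Let $G=H_1\ast H_2$ and let $Q$ be a maximal clique of $G$. For each $i\in\{1,2\}$, write $R_i=Q\cap V(G_i)$, so that $Q=R_1\cup R_2$ is a disjoint union. The argument rests on three facts. First, $G[V(G_i)]=G_i$, because the composition adds edges only between $X_1$ and $X_2$. Second, every vertex of $G_1$ outside $X_1$ has no neighbours in $V(G_2)$, and symmetrically for $G_2$. Third, $X_1$ is complete to $X_2$ in $G$. The proof splits according to whether $Q$ meets both sides.

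\emph{Case 1: $Q$ meets both $V(G_1)$ and $V(G_2)$.} An edge of $G$ joining $V(G_1)$ and $V(G_2)$ goes between $X_1$ and $X_2$. Take a vertex $u\in R_1$ and a vertex $w\in R_2$; they are adjacent, so $u\in X_1$. Hence $R_1\subseteq X_1$, and by the same argument $R_2\subseteq X_2$. Each $R_i$ is a clique of $G_i[X_i]$. Suppose $R_1$ were not maximal in $G_1[X_1]$, and let $x\in X_1-R_1$ be complete to $R_1$. Since $x\in X_1$, it is also complete to $R_2\subseteq X_2$. Then $Q\cup\{x\}$ is a larger clique of $G$, which contradicts the maximality of $Q$. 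The same reasoning applies to $R_2$, so assertion~\eqref{it:Q2} holds.

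\emph{Case 2: $Q\subseteq V(G_i)$ for some $i$,} say $i=1$. Then $Q$ is a clique of $G_1$. It is a maximal clique of $G_1$: any vertex of $G_1$ extending $Q$ within $G_1$ would also extend it in $G$, because $G[V(G_1)]=G_1$. We now split on whether $Q\subseteq X_1$.

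If $Q\not\subseteq X_1$, assertion~\eqref{it:Q1} holds. If $Q\subseteq X_1$, we use the hypothesis $X_2\neq\emptyset$. Any $x\in X_2$ is complete to $X_1\supseteq Q$, so $Q\cup\{x\}$ is a larger clique of $G$. This contradicts maximality, so the subcase $Q\subseteq X_1$ cannot occur. Note that $Q$ is nonempty whenever $G$ has at least one vertex, so the cases above are exhaustive.

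The only delicate point is this last subcase, where nonemptiness of $X_{3-i}$ is exactly what rules out $Q\subseteq X_i$. The rest is routine bookkeeping about which edges the composition adds.
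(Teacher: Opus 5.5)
Your proof is correct and follows essentially the same route as the paper. You split on whether $Q$ lies inside a single $V(G_i)$, where $X_{3-i}\neq\emptyset$ rules out $Q\subseteq X_i$, or meets both sides, where adjacency across the composition forces $R_i\subseteq X_i$ and maximality of $Q$ gives maximality of each $R_i$ in $G_i[X_i]$; the only difference from the paper is the order in which you present the two cases.
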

\begin{proof} Let $G=H_1\ast H_2$ and let $Q$ be a maximal clique of $G$.

Suppose first that $Q\subseteq V(G_i)$ for some $i\in\{1,2\}$. Since $G_i$ is an induced subgraph of $G$, $Q$ is a maximal clique of $G_i$. Moreover, $Q$ is not contained in $X_i$, since otherwise a vertex of $X_{3-i}$ would be adjacent in $G$ to every vertex of $Q$, contradicting the maximality of $Q$ in $G$. Thus, in this case, assertion~\eqref{it:Q1} holds.

Suppose now that $Q$ is contained in neither $V(G_1)$ nor $V(G_2)$. For each $i\in\{1,2\}$, let $R_i=Q\cap V(G_i)$. Since $V(G_1)$ and $V(G_2)$ partition $V(G)$, we have $Q=R_1\cup R_2$, where $R_1$ and $R_2$ are both nonempty. Since $Q$ is a clique of $G$, $R_1$ is complete to $R_2$ in $G$. By the definition of $H_1\ast H_2$, this implies that $R_1\subseteq X_1$ and $R_2\subseteq X_2$. Since $G_i$ is an induced subgraph of $G$, $R_i$ is a clique of $G_i[X_i]$ for each $i\in\{1,2\}$. Finally, for each $i\in\{1,2\}$, $R_i$ is a maximal clique of $G_i[X_i]$, since any vertex of $X_i-R_i$ complete to $R_i$ in $G_i$ would also be complete to $R_{3-i}$ in $G$ (as $X_i$ is complete to $X_{3-i}$ in $G$ and $R_{3-i}\subseteq X_{3-i}$) and hence complete to $Q$ in $G$, contradicting the maximality of $Q$ in $G$. Therefore, in this case, assertion~\eqref{it:Q2} holds. This completes the proof of the lemma.\end{proof}

We are now ready to prove the following characterization of when $H_1\ast H_2$ is balanced.

\begin{proposition}\label{prop:ast-bal} Let $H_1$ and $H_2$ be graphs such that $V(H_1)\cap V(H_2)=\{v\}$. For each $i\in\{1,2\}$, let $X_i=N_{H_i}(v)$ and let $H_i^+$ be the graph that arises from $H_i$ by adding a false twin of $v$. If $X_1$ and $X_2$ are nonempty, then the following assertions hold:
\begin{enumerate}[(i)]
\item \label{item:TSareCliques} If $X_1$ is a clique of $H_1$ and $X_2$ is a clique of $H_2$, then $H_1\ast H_2$ is balanced if and only if $H_1$ and $H_2$ are balanced.
\item \label{item:TS1isClique} If $X_1$ is a clique of $H_1$ and $X_2$ is not a clique of $H_2$, then $H_1\ast H_2$ is balanced if and only if $H_1^+$ and $H_2$ are balanced.
\item \label{item:TSareNotCliques} If neither $X_1$ is a clique of $H_1$ nor $X_2$ is a clique of $H_2$, then $H_1\ast H_2$ is balanced if and only if $H_1^+$ and $H_2^+$ are balanced.
\end{enumerate}
\end{proposition}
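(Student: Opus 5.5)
The plan is to prove both implications of each item using Lemma~\ref{lemma:proper2coloring}. The `only if' directions come from exhibiting the relevant graphs as induced subgraphs of $G=H_1\ast H_2$. The `if' directions come from gluing proper $2$-colorings of the two sides.

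\emph{Balancedness is hereditary.} This follows from Lemma~\ref{lemma:proper2coloring}. If $S\subseteq V(G)$ and $W\subseteq S$, every maximal clique $Q'$ of $G[S]$ is contained in some maximal clique $Q$ of $G$, and then $Q\cap W=Q'\cap W$. Hence a proper $2$-coloring of $W$ with respect to $G$ is also proper with respect to $G[S]$.

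\emph{The `only if' directions.} Pick $x\in X_{3-i}$ (possible since $X_{3-i}\neq\emptyset$). Then $G[V(G_i)\cup\{x\}]$ is isomorphic to $H_i$, with $x$ playing the role of $v$, because $N_G(x)\cap V(G_i)=X_i$. If $X_{3-i}$ is not a clique, pick nonadjacent $x,x'\in X_{3-i}$. They are false twins in $G[V(G_i)\cup\{x,x'\}]$, since both have neighborhood exactly $X_i$ there, so this induced subgraph is isomorphic to $H_i^+$. This gives the necessity of the listed conditions in all three items: $H_i$ is always induced, and $H_i^+$ is induced whenever $X_{3-i}$ is not a clique.

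\emph{The `if' directions.} Fix $W\subseteq V(G)$ and set $W_i=W\cap V(G_i)$. By Lemma~\ref{lem:cliques-ast}, a maximal clique of $G$ is of one of two types.
\begin{enumerate}[(a)]
\item A maximal clique $Q$ of $G_i$ not contained in $X_i$. Such a $Q$ is also a maximal clique of $H_i$ and of $H_i^+$ avoiding $v$ and $v'$. So any proper coloring of a set $W_i$, $W_i\cup\{v\}$ with respect to $H_i$ or $H_i^+$ handles these cliques.
\item A union $R_1\cup R_2$. Here $R_i\cup\{v\}$ (and $R_i\cup\{v'\}$ in $H_i^+$) are maximal cliques.
\end{enumerate}
The basic gluing move is as follows. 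Properly color $W_i\cup\{v\}$ with respect to $H_i^+$, with $v'$ excluded, so the set is of the form allowed in Lemma~\ref{lemma:WconVynoV'}. This gives two guarantees. Whenever $R_i\cap W\neq\emptyset$, the set $R_i\cap W$ contains a vertex colored differently from $v$. Whenever $|R_i\cap W|\geq 2$, the set $R_i\cap W$ is itself non-monochromatic, by using the clique $R_i\cup\{v'\}$. Then swap the colors on one side so that the two copies of $v$ receive different colors, say $\phi_1(v)=c$ and $\phi_2(v)=\bar c$, and discard $v$.
\begin{itemize}
\item If both $R_1\cap W$ and $R_2\cap W$ are nonempty, the first contains color $\bar c$ and the second contains color $c$, so $(R_1\cup R_2)\cap W$ is non-monochromatic.
\item If only one of them is nonempty, the second guarantee applies.
\end{itemize}
This settles item~\eqref{item:TSareNotCliques} directly.

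\emph{Items~\eqref{item:TSareCliques} and~\eqref{item:TS1isClique}.} When $X_1$ is a clique, $R_1=X_1$ is forced, and we split on $k=|X_1\cap W|$.
\begin{itemize}
\item If $k=0$, only the $G_2$ side matters. For item~\eqref{item:TS1isClique}, color $W_2$ using $H_2^+$ as above, and color $W_1$ by any proper coloring with respect to $H_1$. For item~\eqref{item:TSareCliques}, $R_2=X_2$ as well, and a proper coloring of $W_2$ with respect to $H_2$ makes $X_2\cap W$ non-monochromatic via the clique $X_2\cup\{v\}$.
\item If $k\geq 2$, color $W_1$ (without $v$) properly with respect to $H_1$. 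The clique $X_1\cup\{v\}$ then forces $X_1\cap W$ to be bichromatic, so every $X_1\cup R_2$ is automatically fine, and $W_2$ may be colored arbitrarily properly with respect to $H_2$.
\item If $k=1$, say $X_1\cap W=\{x_1\}$, color $W_2\cup\{v\}$ properly with respect to $H_2$, and permute colors so that $\phi_2(v)=\phi_1(x_1)$. Then any nonempty $R_2\cap W$ contains a color different from that of $x_1$, while $R_2\cap W=\emptyset$ gives only one vertex in the intersection.
\end{itemize}
Only $H_1$ and $H_2$ are needed here; $H_2^+$ is needed only in item~\eqref{item:TS1isClique} when $k=0$, and $H_1^+$ is never needed when $X_1$ is a clique.

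\emph{Expected obstacle.} The main difficulty is handling, simultaneously for all pairs $(R_1,R_2)$, the cases where one side meets $W$ and the other does not. The device that resolves it is the false twin $v'$ kept outside the colored set, as licensed by Lemma~\ref{lemma:WconVynoV'}, so I would set up the $H_i^+$ colorings first and derive the clique cases as the degenerate specializations above.
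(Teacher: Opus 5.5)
You have a gap in item~(ii); your `only if' arguments and items~(i) and~(iii) are correct, and (iii) is essentially the paper's proof. Item~(ii) assumes that $H_1^+$ and $H_2$ are balanced, not $H_2^+$. Yet in the case $k=0$ you colour $W_2$ ``using $H_2^+$ as above'', that is, you properly colour $W_2\cup\{v\}$ with respect to $H_2^+$. That graph need not be balanced, and this colouring can fail to exist.

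Take $H_1=K_2$ on $\{v,a\}$ and $H_2=W_4$ with hub $v$ and rim $c_1c_2c_3c_4$. Then $X_1=\{a\}$ is a clique and $X_2$ is not. Both $H_1^+\cong P_3$ and $H_2$ are balanced, and indeed $H_1\ast H_2\cong W_4$ is balanced. But $H_2^+\cong\overline{3K_2}$. For $W=\{c_1,c_2\}$ we have $k=0$, and $\{v,c_1,c_2\}$ has no proper $2$-coloring with respect to $H_2^+$: the maximal cliques $\{v,c_1,c_4\}$, $\{v,c_2,c_3\}$ and $\{v',c_1,c_2\}$ force $v$, $c_1$, $c_2$ to receive pairwise distinct colors.

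Your closing remark also has the roles reversed. As your own `only if' argument shows, a nonedge of $X_2$ makes $H_1^+$ (the augmentation on the clique side) an induced subgraph of $H_1\ast H_2$. Nothing forces $H_2^+$ to be balanced.

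The repair is immediate: for $k=0$, do exactly what you did in item~(i). Properly color $W_2=W\cap V(G_2)$, with $v$ left out, with respect to $H_2$. For each maximal clique $R_2$ of $G_2[X_2]$, the set $R_2\cup\{v\}$ is a maximal clique of $H_2$ whose trace on $W_2$ is $R_2\cap W=Q\cap W$. So one coloring handles all cliques $X_1\cup R_2$ at once. The false twin is needed only when $v$ itself must be colored.

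After this change, your proof of (i) and (ii) is valid and uses only $H_1$ and $H_2$, with $H_1$ balanced by heredity from $H_1^+$. This is a genuinely different route from the paper's proof of (ii). The paper colors $(W\cap V(G_1))\cup\{v\}$ with respect to $H_1^+$ and uses the cliques $X_1\cup\{v'\}$ and $X_1\cup\{v\}$ together with $\phi_1(v)\neq\phi_2(v)$. You instead leave $v$ out on the clique side and, when $k=1$, align $\phi_2(v)$ with $\phi_1(x_1)$; this is the device the paper uses in Proposition~\ref{prop:TS-bal}.

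Your route yields slightly more: when $X_1$ is a clique, $H_1\ast H_2$ is balanced if and only if $H_1$ and $H_2$ are. Taking $H_2\cong P_3$ centered at $v$, so that $H_1\ast H_2\cong H_1^+$, this shows that adding a false twin to a vertex whose neighborhood is a nonempty clique preserves balancedness. That is why this stronger form does not conflict with the necessity of $H_1^+$ in (ii).

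Finally, your appeal to Lemma~\ref{lemma:WconVynoV'} is unnecessary. Once $H_i^+$ is assumed balanced, Lemma~\ref{lemma:proper2coloring} already gives a proper coloring of every subset.
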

\begin{proof} Throughout this proof, let $G=H_1\ast H_2$, let $G_i=H_i-v$ for each $i\in\{1,2\}$, and let $v'$ denote, for each $i\in\{1,2\}$, the false twin of $v$ in $H_i^+$.

Suppose that $G$ is balanced. Since $X_1$ and $X_2$ are nonempty, let $x_i$ be a vertex of $X_{3-i}$ for each $i\in\{1,2\}$. For each $i\in\{1,2\}$, $H_i$ is isomorphic to $G[V(G_i)\cup\{x_i\}]$, which is balanced as an induced subgraph of the balanced graph $G$. We now show that, for each $i\in\{1,2\}$, if $X_{3-i}$ is not a clique of $H_{3-i}$, then $H_i^+$ is balanced. Let $i\in\{1,2\}$ and suppose that $X_{3-i}$ is not a clique of $H_{3-i}$. Let $y$ and $y'$ be two vertices of $X_{3-i}$ that are nonadjacent in $H_{3-i}$. The graph $H_i^+$ is isomorphic to $G[V(G_i)\cup\{y,y'\}]$, which is balanced as an induced subgraph of the balanced graph $G$. Hence, $H_i^+$ is balanced. Thus, the `only if' implications in assertions~\eqref{item:TSareCliques},~\eqref{item:TS1isClique}, and~\eqref{item:TSareNotCliques} hold. It only remains to prove the `if' implication in each assertion.

For each of the three `if' implications below, let $W$ be an arbitrary subset of $V(G)$. By Lemma~\ref{lemma:proper2coloring}, to prove that $G$ is balanced under the hypotheses of the corresponding assertion, it suffices to give a proper $2$-coloring of $W$ with respect to $G$.

\par\medskip\indent\emph{Proof of the `if' implication in assertion~\eqref{item:TSareCliques}.} Suppose that $X_1$ is a clique of $H_1$ and $X_2$ is a clique of $H_2$. Suppose, in addition, that $H_1$ and $H_2$ are balanced. For each $i\in\{1,2\}$, let $W_i=W\cap V(G_i)$ if $W\cap X_{3-i}=\emptyset$ and let $W_i=(W\cap V(G_i))\cup\{v\}$ otherwise. Since $H_i$ is balanced for each $i\in\{1,2\}$, Lemma~\ref{lemma:proper2coloring} yields, for each $i\in\{1,2\}$, a proper $2$-coloring $\phi_i$ of $W_i$ with respect to $H_i$. If $v\in W_1\cap W_2$, interchange the colors of $\phi_2$ if necessary so that $\phi_1(v)\neq\phi_2(v)$. Let $\phi$ be the $2$-coloring of $W$ such that $\phi\vert_{W\cap V(G_i)}=\phi_i\vert_{W\cap V(G_i)}$ for each $i\in\{1,2\}$. This is well defined because $V(G_1)$ and $V(G_2)$ partition $V(G)$.

Let $Q$ be a maximal clique of $G$ such that $\vert Q\cap W\vert\geq 2$. Since $X_1$ is a clique of $G_1$ and $X_2$ is a clique of $G_2$, Lemma~\ref{lem:cliques-ast} implies that one of the following two cases holds for $Q$.

\resetproofcases
\proofcase{$Q$ is a maximal clique of $G_i$ not contained in $X_i$, for some $i\in\{1,2\}$} Hence, $Q\cup\{v\}$ is not a clique of $H_i$. Therefore, $Q$ is a maximal clique of $H_i$ and $Q\cap W=Q\cap W_i$. The properness of $\phi_i$ implies that $Q\cap W_i$ is not monochromatic under $\phi_i$. Since $\phi$ agrees with $\phi_i$ on $Q\cap W_i$, $Q\cap W$ is not monochromatic under $\phi$.

\proofcase{$Q=X_1\cup X_2$} Hence, $X_i\cup\{v\}$ is a maximal clique of $H_i$ for each $i\in\{1,2\}$, because $N_{H_i}(v)=X_i$. We consider the following two subcases.

\proofsubcase{$W\cap X_1=\emptyset$ or $W\cap X_2=\emptyset$} Let $i\in\{1,2\}$ be such that $W\cap X_{3-i}=\emptyset$. Since $Q=X_1\cup X_2$ and $\vert Q\cap W\vert\geq 2$, we have $Q\cap W=X_i\cap W$ and $\vert X_i\cap W\vert\geq 2$. By the definition, $W_i=W\cap V(G_i)$. Thus, $(X_i\cup\{v\})\cap W_i=X_i\cap W$. Since $X_i\cup\{v\}$ is a maximal clique of $H_i$, the properness of $\phi_i$ implies that $X_i\cap W$ is not monochromatic under $\phi_i$. Since $\phi$ agrees with $\phi_i$ on $X_i\cap W$, the equality $Q\cap W=X_i\cap W$ implies that $Q\cap W$ is not monochromatic under $\phi$.

\proofsubcase{$W\cap X_1\neq\emptyset$ and $W\cap X_2\neq\emptyset$} By definition, $v\in W_i$ for each $i\in\{1,2\}$. Hence, $(X_i\cup\{v\})\cap W_i$ has at least two vertices for each $i\in\{1,2\}$. Since $X_i\cup\{v\}$ is a maximal clique of $H_i$, the properness of $\phi_i$ ensures the existence of a vertex $x_i\in X_i\cap W$ such that $\phi_i(x_i)\neq\phi_i(v)$ for each $i\in\{1,2\}$. Since there are only two colors and $\phi_1(v)\neq\phi_2(v)$, it follows that $\phi(x_1)\neq\phi(x_2)$. Thus, $Q\cap W$ is not monochromatic under $\phi$.

\proofclaimspace The two cases for $Q$ above exhaust all possibilities and in each case $Q\cap W$ is not monochromatic under $\phi$. Since $Q$ is an arbitrary maximal clique of $G$ such that $\vert Q\cap W\vert\geq 2$, $\phi$ is a proper $2$-coloring of $W$ with respect to $G$. Since $W$ is an arbitrary subset of $V(G)$, Lemma~\ref{lemma:proper2coloring} implies that $G$ is balanced. This completes the proof of the `if' implication of assertion~\eqref{item:TSareCliques}.

\par\medskip\indent\emph{Proof of the `if' implication in assertion~\eqref{item:TS1isClique}.} Suppose that $X_1$ is a clique of $H_1$ and $X_2$ is not a clique of $H_2$. Suppose, in addition, that $H_1^+$ and $H_2$ are balanced. Let $W_1=(W\cap V(G_1))\cup\{v\}$. Let $W_2=W\cap V(G_2)$ if $W\cap X_1=\emptyset$ and let $W_2=(W\cap V(G_2))\cup\{v\}$ otherwise. Since $H_1^+$ and $H_2$ are balanced, Lemma~\ref{lemma:proper2coloring} yields a proper $2$-coloring $\phi_1$ of $W_1$ with respect to $H_1^+$ and a proper $2$-coloring $\phi_2$ of $W_2$ with respect to $H_2$. If $v\in W_2$, interchange the colors of $\phi_2$ if necessary so that $\phi_1(v)\neq\phi_2(v)$. Let $\phi$ be the $2$-coloring of $W$ such that $\phi\vert_{W\cap V(G_i)}=\phi_i\vert_{W\cap V(G_i)}$ for each $i\in\{1,2\}$.

Let $Q$ be a maximal clique of $G$ such that $\vert Q\cap W\vert\geq 2$. Since $X_1$ is a clique of $G_1$, Lemma~\ref{lem:cliques-ast} implies that one of the following two cases holds for $Q$.

\resetproofcases
\proofcase{$Q$ is a maximal clique of $G_i$ not contained in $X_i$, for some $i\in\{1,2\}$} If $i=1$, then neither $Q\cup\{v\}$ nor $Q\cup\{v'\}$ is a clique of $H_1^+$. Hence, $Q$ is a maximal clique of $H_1^+$. If $i=2$, then $Q\cup\{v\}$ is not a clique of $H_2$. Hence, $Q$ is a maximal clique of $H_2$. In either case, $Q\cap W=Q\cap W_i$. The properness of $\phi_i$ implies that $Q\cap W_i$ is not monochromatic under $\phi_i$. Since $\phi$ agrees with $\phi_i$ on $Q\cap W_i$, $Q\cap W$ is not monochromatic under $\phi$.

\proofcase{$Q=X_1\cup R_2$, where $R_2$ is a maximal clique of $G_2[X_2]$} Since $X_1$ is a clique of $H_1$, $X_1\cup\{v\}$ and $X_1\cup\{v'\}$ are maximal cliques of $H_1^+$, because each of $v$ and $v'$ is adjacent exactly to $X_1$ in $H_1^+$ and $v$ and $v'$ are nonadjacent in $H_1^+$. Also, $R_2\cup\{v\}$ is a maximal clique of $H_2$, since any vertex of $H_2$ complete to $R_2\cup\{v\}$ would belong to $X_2-R_2$, contradicting the maximality of $R_2$ in $G_2[X_2]$. We now consider subcases according to the size of $X_1\cap W$.

\proofsubcase{$\vert X_1\cap W\vert\geq 2$} Since $(X_1\cup\{v'\})\cap W_1=X_1\cap W$ and $X_1\cup\{v'\}$ is a maximal clique of $H_1^+$, the properness of $\phi_1$ implies that $X_1\cap W$ is not monochromatic under $\phi_1$. Since $X_1\cap W\subseteq Q\cap W$ and $\phi$ agrees with $\phi_1$ on $X_1\cap W$, $Q\cap W$ is not monochromatic under $\phi$.

\proofsubcase{$\vert X_1\cap W\vert=1$} Let $x_1$ be the unique vertex of $X_1\cap W$. Since $(X_1\cup\{v\})\cap W_1=\{x_1,v\}$, the properness of $\phi_1$ implies $\phi_1(x_1)\neq\phi_1(v)$. Since $Q=X_1\cup R_2$, the assumptions $\vert X_1\cap W\vert=1$ and $\vert Q\cap W\vert\geq 2$ imply that $R_2\cap W$ is nonempty. Moreover, since $W\cap X_1$ is nonempty, we have $v\in W_2$. Thus, the maximal clique $R_2\cup\{v\}$ of $H_2$ has at least two vertices in $W_2$. Hence, the properness of $\phi_2$ ensures the existence of a vertex $x_2\in R_2\cap W$ such that $\phi_2(x_2)\neq\phi_2(v)$. Since there are only two colors and $\phi_1(v)\neq\phi_2(v)$, it follows that $\phi(x_1)\neq\phi(x_2)$. Thus, $Q\cap W$ is not monochromatic under $\phi$.

\proofsubcase{$X_1\cap W=\emptyset$} In this subcase, $W_2=W\cap V(G_2)$ and $Q\cap W=R_2\cap W$. Since $\vert Q\cap W\vert\geq 2$, the maximal clique $R_2\cup\{v\}$ of $H_2$ has at least two vertices in $W_2$. The properness of $\phi_2$ implies that $Q\cap W$ is not monochromatic under $\phi$.

\proofclaimspace The two cases for $Q$ above exhaust all possibilities and in each case $Q\cap W$ is not monochromatic under $\phi$. Since $Q$ is an arbitrary maximal clique of $G$ such that $\vert Q\cap W\vert\geq 2$, $\phi$ is a proper $2$-coloring of $W$ with respect to $G$. Since $W$ is an arbitrary subset of $V(G)$, Lemma~\ref{lemma:proper2coloring} implies that $G$ is balanced. This completes the proof of the `if' implication of assertion~\eqref{item:TS1isClique}.

\par\medskip\indent\emph{Proof of the `if' implication in assertion~\eqref{item:TSareNotCliques}.} Suppose that neither $X_1$ is a clique of $H_1$ nor $X_2$ is a clique of $H_2$. Suppose, in addition, that $H_1^+$ and $H_2^+$ are balanced. For each $i\in\{1,2\}$, let $W_i=(W\cap V(G_i))\cup\{v\}$. Since $H_i^+$ is balanced for each $i\in\{1,2\}$, Lemma~\ref{lemma:proper2coloring} yields, for each $i\in\{1,2\}$, a proper $2$-coloring $\phi_i$ of $W_i$ with respect to $H_i^+$. Interchanging the colors of $\phi_2$ if necessary, we assume that $\phi_1(v)\neq\phi_2(v)$. Let $\phi$ be the $2$-coloring of $W$ such that $\phi\vert_{W\cap V(G_i)}=\phi_i\vert_{W\cap V(G_i)}$ for each $i\in\{1,2\}$.

Let $Q$ be a maximal clique of $G$ such that $\vert Q\cap W\vert\geq 2$. By Lemma~\ref{lem:cliques-ast}, one of the following two cases holds for $Q$.

\resetproofcases
\proofcase{$Q$ is a maximal clique of $G_i$ not contained in $X_i$, for some $i\in\{1,2\}$} Hence, neither $Q\cup\{v\}$ nor $Q\cup\{v'\}$ is a clique of $H_i^+$. Therefore, $Q$ is a maximal clique of $H_i^+$. Since $Q\cap W=Q\cap W_i$, the properness of $\phi_i$ implies that $Q\cap W_i$ is not monochromatic under $\phi_i$. Since $\phi$ agrees with $\phi_i$ on $Q\cap W_i$, $Q\cap W$ is not monochromatic under $\phi$.

\proofcase{$Q=R_1\cup R_2$, where $R_i$ is a maximal clique of $G_i[X_i]$ for each $i\in\{1,2\}$} We consider two subcases according to the sizes of $R_1\cap W$ and $R_2\cap W$.

\proofsubcase{$\vert R_i\cap W\vert\geq 2$ for some $i\in\{1,2\}$} The set $R_i\cup\{v'\}$ is a maximal clique of $H_i^+$, since any vertex of $H_i^+$ complete to $R_i\cup\{v'\}$ would belong to $X_i-R_i$, contradicting the maximality of $R_i$ in $G_i[X_i]$. Since $(R_i\cup\{v'\})\cap W_i=R_i\cap W$, the properness of $\phi_i$ implies that $R_i\cap W$ is not monochromatic under $\phi_i$. Since $\phi$ agrees with $\phi_i$ on $R_i\cap W$, $Q\cap W$ is not monochromatic under $\phi$.

\proofsubcase{$\vert R_i\cap W\vert\leq 1$ for each $i\in\{1,2\}$} Since $Q\cap W=(R_1\cap W)\cup(R_2\cap W)$ and $\vert Q\cap W\vert\geq 2$, both $R_1\cap W$ and $R_2\cap W$ have exactly one vertex. Let $x_i$ be the unique vertex of $R_i\cap W$ for each $i\in\{1,2\}$. For each $i\in\{1,2\}$, the set $R_i\cup\{v\}$ is a maximal clique of $H_i^+$, since any vertex of $H_i^+$ complete to $R_i\cup\{v\}$ would belong to $X_i-R_i$, contradicting the maximality of $R_i$ in $G_i[X_i]$. Since $(R_i\cup\{v\})\cap W_i=\{x_i,v\}$, the properness of $\phi_i$ implies $\phi_i(x_i)\neq\phi_i(v)$ for each $i\in\{1,2\}$. Since there are only two colors and $\phi_1(v)\neq\phi_2(v)$, it follows that $\phi(x_1)\neq\phi(x_2)$. Thus, $Q\cap W$ is not monochromatic under $\phi$.

\proofclaimspace The two cases for $Q$ above exhaust all possibilities and in each case $Q\cap W$ is not monochromatic under $\phi$. Since $Q$ is an arbitrary maximal clique of $G$ such that $\vert Q\cap W\vert\geq 2$, $\phi$ is a proper $2$-coloring of $W$ with respect to $G$. Since $W$ is an arbitrary subset of $V(G)$, Lemma~\ref{lemma:proper2coloring} implies that $G$ is balanced. This completes the proof of the `if' implication of assertion~\eqref{item:TSareNotCliques} and thus the proof of the proposition.\end{proof}

\begin{remark} Proposition~\ref{prop:ast-bal} shows that the balancedness of $H_1\ast H_2$ is not determined solely by the balancedness of $H_1$ and $H_2$. When one or both neighborhoods $N_{H_i}(v)$ are not cliques, it also depends on the balancedness of the corresponding graphs $H_i^+$ arising by adding a false twin of $v$. Consequently, any attempt to use a split decomposition tree to determine balancedness in general would have to keep track of these one-vertex augmentations $H_i^+$, rather than only the balancedness of $H_1$ and $H_2$.\end{remark}

In the inductive proof of Theorem~\ref{thm:DHbalancedGraphs}, applying Proposition~\ref{prop:ast-bal} requires balancedness information for the augmentations of the graphs $G_i=\DHG(T_i)$ obtained by adding a vertex $v$ with neighborhood $\TS(T_i)$ and, when needed, by adding a false twin of $v$. The following proposition provides the assertions about the balancedness of these augmentations needed in that proof.

\begin{proposition}\label{prop:TS-bal} Let $T$ be an $\OVE$-tree. Let $G=\DHG(T)$, let $H$ be the graph that arises from $G$ by adding a vertex $v$ such that $N_{H}(v)=\TS(T)$, and let $H^+$ be the graph that arises from $H$ by adding a false twin $v'$ of $v$. If $G$ is balanced, then both the following assertions hold:
\begin{enumerate}[(i)]
\item\label{item:2vertices} if no induced $\overline{3K_2}$ of $H^+$ contains both $v$ and $v'$, then $H^+$ is balanced; and
\item \label{item:1vertex} if no induced $\overline{3K_2}$ of $H$ contains $v$, then $H$ is balanced.
\end{enumerate}
\end{proposition}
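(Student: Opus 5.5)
I would prove both assertions simultaneously by induction on the structure of the $\OVE$-tree $T$, following Definition~\ref{def:OVE-tree}. Assertion~\eqref{item:1vertex} will be obtained from assertion~\eqref{item:2vertices} whenever possible. If $H^+$ is balanced, then so is its induced subgraph $H$. So the main work for~\eqref{item:1vertex} is to show that the hypothesis of~\eqref{item:1vertex} implies the hypothesis of~\eqref{item:2vertices}. This fails in general, because an induced $\overline{3K_2}$ of $H^+$ containing $v,v'$ does not contain $v$ alone in $H$. Therefore~\eqref{item:1vertex} will have its own, parallel case analysis. The base case is easy: $T$ has one vertex $u$, $H$ is $K_2$ and $H^+$ is $P_3$, and both are balanced.

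For the inductive step $T=T_1\oplus_\alpha T_2$, write $G_i=\DHG(T_i)$ and $X_i=\TS(T_i)$. The key observation is that $H$ (and $H^+$) decompose in a way that lets me reuse Proposition~\ref{prop:ast-bal} or Lemma~\ref{lemma:WconVynoV'}.

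\emph{Case $\alpha=\flabel$.} Here $G$ is the disjoint union of $G_1$ and $G_2$, and $N_H(v)=X_1\cup X_2$. Then $v$ is a cut vertex of $H$, and every maximal clique of $H$ lies in $H_1$ or $H_2$, where $H_i$ is $G_i$ plus $v$ adjacent to $X_i$. Proper $2$-colorings of $W\cap V(H_i)$ can be glued at $v$ after swapping colors. This gives that $H$ is balanced iff $H_1$ and $H_2$ are, and similarly for $H^+$ with $H_1^+,H_2^+$. The hypotheses pass to $H_i$ and $H_i^+$ because they are induced subgraphs, so induction finishes this case.

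\emph{Case $\alpha=\plabel$.} Here $\TS(T)=X_1$. The graph $H$ is obtained from $G_1$ with $v$ attached to $X_1$, together with $G_2$ joined to $X_1$. I would view $H$ as the composition $H_1'\ast H_2$, where $H_1'$ is $G_1$ plus $v$ and a new vertex $w$, both adjacent to $X_1$ and nonadjacent to each other. That is, $H_1'$ is exactly $H_1^+$ for $T_1$, and the gluing vertex $w$ plays the role of the marker. Proposition~\ref{prop:ast-bal} then reduces balancedness of $H$ to balancedness of $H_1^+$ or $(H_1^+)^+$ (two false twins added) and of $H_2$ or $H_2^+$. Balancedness of $H_2$ and $H_2^+$ follows by induction, using that $G$ is balanced together with the absence of $\overline{3K_2}$ in the appropriate place. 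For $H_1$ with three pairwise false twins attached to $X_1$, I would use Lemma~\ref{lemma:WconVynoV'} repeatedly, which collapses extra false twins, so this is equivalent to $H_1^+$ being balanced.

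\emph{Case $\alpha=\tlabel$.} Here $\TS(T)=X_1\cup X_2$. Now $H$ is $G$ plus a vertex complete to $X_1\cup X_2$. I would express $H$ as a composition along the split separating $V(G_2)$ from $V(G_1)\cup\{v\}$ and apply Proposition~\ref{prop:ast-bal} again. The pieces are $H_2$ (or $H_2^+$) and a graph built from $G_1$ with $v$ and the marker adjacent to $X_1$ and to each other, i.e.\ true twins. Which subcase of Proposition~\ref{prop:ast-bal} applies depends on whether $X_1$ and $X_2$ are cliques.

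The main obstacle is the $\tlabel$ case when $X_1$ and $X_2$ are not cliques. There the required augmentations are no longer of the form covered by the induction hypothesis, e.g.\ adding both a true twin and a false twin relative to $v$. Here I would use the forbidden $\overline{3K_2}$ explicitly. Two nonadjacent vertices in $X_1$, two nonadjacent in $X_2$, and $v,v'$ form an induced $\overline{3K_2}$ of $H^+$. This contradicts hypothesis~\eqref{item:2vertices}, so for~\eqref{item:2vertices} at least one $X_i$ must be a clique. For~\eqref{item:1vertex}, a similar configuration would use $v$ together with a vertex from the other side to force the needed clique conditions. Once this clique condition is established, the required augmentations reduce to graphs covered by the induction hypothesis, and Proposition~\ref{prop:ast-bal} finishes the case.
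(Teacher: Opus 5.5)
Your route differs from the paper's. Instead of gluing proper $2$-colorings by hand inside the induction, you reduce each inductive step to Proposition~\ref{prop:ast-bal} by viewing $H$ and $H^+$ as compositions. Much of this can be made to work. For $\alpha=\plabel$, for instance, Proposition~\ref{prop:ast-bal} asks for $H_2^+$ only when $X_1$ is not a clique. In that case $H_2^+\cong G[V(G_2)\cup\{x,x'\}]$ for nonadjacent $x,x'\in X_1$, so it is balanced because $G$ is; this avoids the paper's split on induced $C_4$'s in $G_2[X_2]$.

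However, the step you flag as the main obstacle rests on a false claim. In the $\tlabel$ case of assertion~(ii), you propose to use the hypothesis on $v$ to force one of $X_1,X_2$ to be a clique, as you correctly do for assertion~(i). No such forcing exists. Let $T_1$ and $T_2$ each be a single $\flabel$-edge, so that $G_1[X_1]$ and $G_2[X_2]$ are both $2K_1$. Then $G=C_4$ is balanced, and $H=W_4$ has five vertices, so it contains no $\overline{3K_2}$ at all; yet neither $X_i$ is a clique. In exactly this situation Proposition~\ref{prop:ast-bal}(iii) demands balancedness of $H_2^+$ and of the $G_1$-side piece with a true-twin marker plus a false twin of it. That is precisely the augmentation you say the induction hypothesis does not cover, and your plan supplies neither graph.

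Two ideas are missing.
\begin{itemize}
\item When $X_{3-i}$ is not a clique of $G_{3-i}$, we have $H_i^+\cong G[V(G_i)\cup\{y,y'\}]$ for nonadjacent $y,y'\in X_{3-i}$. So $H_i^+$ is balanced simply because $G$ is, and no hypothesis about $v$ is needed.
\item The mixed augmentation is $G_1$ plus $v,w,w'$, with $v$ adjacent to $X_1\cup\{w,w'\}$ and $w,w'$ nonadjacent and adjacent to $X_1\cup\{v\}$. It is itself the composition of $H_1$ with a diamond along the marker. A second application of Proposition~\ref{prop:ast-bal} therefore reduces it to $H_1^+$ together with a diamond or a $W_4$, both balanced.
\end{itemize}

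The same second composition is also needed for assertion~(i) in the $\tlabel$ case after you force a clique. The piece obtained from $G_1$ by adding $v$, $v'$, and the marker is not one of the graphs in the induction hypothesis either. Your sentence that the augmentations then ``reduce to graphs covered by the induction hypothesis'' hides this step.

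The paper avoids these augmentations in assertion~(ii) by coloring directly:
\begin{itemize}
\item when $v\in W$, it uses $H_1,H_2$, balanced by induction~(ii);
\item when $v\notin W$ and neither $G_i[X_i]$ contains an induced $C_4$, it uses $H_1^+,H_2^+$, balanced by induction~(i);
\item otherwise, it uses the balancedness of $G$ to force the other twin set to be a clique.
\end{itemize}

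Two smaller points. In the $\flabel$ case for $H^+$, the two pieces share both $v$ and $v'$, and a color swap aligns only one of them. You therefore need Lemma~\ref{lemma:WconVynoV'} to restrict to sets with $v\in W$ and $v'\notin W$. Also, Lemma~\ref{lemma:WconVynoV'} does not by itself say that adding a third false twin preserves balancedness, although the same technique proves it.
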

\begin{proof} Let $T$ be an $\OVE$-tree and let $G$, $H$, and $H^+$ be as in the statement. We prove by induction on the definition of $\OVE$-trees that if $G$ is balanced, then both assertions~\eqref{item:2vertices} and~\eqref{item:1vertex} hold. The two assertions are proved simultaneously: in the inductive step, assertion~\eqref{item:2vertices} for $T=T_1\oplus_\alpha T_2$ is proved using assertion~\eqref{item:2vertices} for $T_1$ and $T_2$, while assertion~\eqref{item:1vertex} for $T$ is proved using, as needed, assertions~\eqref{item:2vertices} and~\eqref{item:1vertex} for $T_1$ and $T_2$, and assertion~\eqref{item:2vertices} already proved for the same tree $T$.

If $T$ has exactly one vertex, then $G=K_1$, $H=K_2$, and $H^+=P_3$. As $H$ and $H^+$ are balanced, both assertions~\eqref{item:2vertices} and~\eqref{item:1vertex} hold.

For the rest of the proof, suppose that $T=T_1\oplus_\alpha T_2$ with $\alpha\in\{\flabel,\tlabel,\plabel\}$ and that $G$ is balanced. For each $i\in\{1,2\}$, let $G_i=\DHG(T_i)$ and let $X_i=\TS(T_i)$. For each $i\in\{1,2\}$, let $H_i$ be the graph that arises from $G_i$ by adding a vertex $v$ such that $N_{H_i}(v)=X_i$ and let $H_i^+$ be the graph that arises from $H_i$ by adding a false twin $v'$ of $v$. By Remark~\ref{rmk:TS-path}, for each $i\in\{1,2\}$, $G_i=G[V(T_i)]$ and thus $G_i$ is balanced as an induced subgraph of the balanced graph $G$. By the induction hypothesis, for each $i\in\{1,2\}$, both assertions~\eqref{item:2vertices} and~\eqref{item:1vertex} hold for $T_i$. Notice that, if $\alpha\in\{\tlabel,\plabel\}$, then, by Definition~\ref{def:OVE-graph}, $G=H_1\ast H_2$.

\resetproofclaims
\resetproofcases \medskip
\noindent\emph{Proof of assertion~\eqref{item:2vertices}.} Assume that no induced $\overline{3K_2}$ of $H^+$ contains both $v$ and $v'$. We will prove that $H^+$ is balanced. By Lemma~\ref{lemma:WconVynoV'}, it suffices to show that every subset $W$ of $V(H^+)$ such that $v\in W$ and $v'\notin W$ admits a proper $2$-coloring with respect to $H^+$. Let $W$ be such a subset. For each $i\in\{1,2\}$, let $W_i=W\cap V(H_i)$. Since $v\in W$, for each $i\in\{1,2\}$, we have $W_i=(W\cap V(G_i))\cup\{v\}$. Since $v'\notin W$, we also have $W_i=W\cap V(H_i^+)$ for each $i\in\{1,2\}$. In the cases and subcases below, we will define $\phi$ by prescribing its values either on $W_1$ and $W_2-\{v\}$, or on $W_1-\{v\}$ and $W_2$. Since $W_1\cup W_2=W$ and $W_1\cap W_2=\{v\}$, each such $2$-coloring $\phi$ will be well defined.

We first prove a claim that will be used in the case analysis.

\proofclaim{For every $i\in\{1,2\}$, except possibly when both $\alpha=\plabel$ and $i=2$, $H_i^+$ is balanced}\label{claim:Hi+-balanced} Let $i\in\{1,2\}$ and suppose that $(\alpha,i)\neq(\plabel,2)$. Thus, by Definition~\ref{def:OVE-graph}, we have $H_i^+=H^+[V(G_i)\cup\{v,v'\}]$. Since $H_i^+$ is an induced subgraph of $H^+$, the assumption on $H^+$ implies that no induced $\overline{3K_2}$ of $H_i^+$ contains both $v$ and $v'$. Hence, by assertion~\eqref{item:2vertices} for $T_i$, $H_i^+$ is balanced.

\proofclaimspace We first consider three cases according to the value of $\alpha$.

\proofcase{$\alpha=\flabel$} By Claim~\ref{claim:Hi+-balanced}, $H_i^+$ is balanced for each $i\in\{1,2\}$. Hence, Lemma~\ref{lemma:proper2coloring} yields, for each $i\in\{1,2\}$, a proper $2$-coloring $\phi_i$ of $W_i$ with respect to $H_i^+$. Interchanging the colors of $\phi_2$ if necessary, we assume that $\phi_1(v)=\phi_2(v)$. Let $\phi$ be the $2$-coloring of $W$ such that $\phi\vert_{W_1}=\phi_1$ and $\phi\vert_{W_2-\{v\}}=\phi_2\vert_{W_2-\{v\}}$. Since $\phi_1(v)=\phi_2(v)$, $\phi$ agrees with $\phi_i$ on $W_i$ for each $i\in\{1,2\}$. Let $Q$ be a maximal clique of $H^+$ such that $\vert Q\cap W\vert\geq 2$. In this case, there are no edges of $H^+$ with one endpoint in $V(G_1)$ and the other in $V(G_2)$. Hence, $Q\subseteq V(H_i^+)$ for some $i\in\{1,2\}$. As $H_i^+$ is an induced subgraph of $H^+$, $Q$ is maximal in $H_i^+$. Since $Q\cap W=Q\cap W_i$ and $\vert Q\cap W\vert\geq 2$, the properness of $\phi_i$ implies that $Q\cap W_i$ is not monochromatic under $\phi_i$. Since $\phi$ agrees with $\phi_i$ on $W_i$, $Q\cap W$ is not monochromatic under $\phi$. Since $Q$ is an arbitrary maximal clique of $H^+$ such that $\vert Q\cap W\vert\geq 2$, $\phi$ is a proper $2$-coloring of $W$ with respect to $H^+$.

\proofcase{$\alpha=\tlabel$} By Claim~\ref{claim:Hi+-balanced}, $H_i^+$ is balanced for each $i\in\{1,2\}$. Hence, Lemma~\ref{lemma:proper2coloring} yields, for each $i\in\{1,2\}$, a proper $2$-coloring $\phi_i$ of $W_i$ with respect to $H_i^+$.

We first show that $X_1$ is a clique of $G_1$ or $X_2$ is a clique of $G_2$. Suppose, for a contradiction, that neither $X_1$ is a clique of $G_1$ nor $X_2$ is a clique of $G_2$. Let $x_1$ and $y_1$ be two vertices of $X_1$ that are nonadjacent in $G_1$ and let $x_2$ and $y_2$ be two vertices of $X_2$ that are nonadjacent in $G_2$. Since $\alpha=\tlabel$, $X_1$ is complete to $X_2$ in $G$. Therefore, $\{x_1,y_1,x_2,y_2,v,v'\}$ induces $\overline{3K_2}$ in $H^+$. This contradicts the assumption that no induced $\overline{3K_2}$ of $H^+$ contains both $v$ and $v'$.

By symmetry, suppose that $X_1$ is a clique of $G_1$. Hence, $X_1\cup\{v\}$ and $X_1\cup\{v'\}$ are maximal cliques of $H_1^+$, because each of $v$ and $v'$ is adjacent exactly to $X_1$ in $H_1^+$ and $v$ and $v'$ are nonadjacent in $H_1^+$.

We next prove a claim about maximal cliques contained in neither $V(G_1)$ nor $V(G_2)$ that will be used in the subcases below.

\proofclaim{If $Q$ is a maximal clique of $H^+$ contained in neither $V(G_1)$ nor $V(G_2)$, then $Q=X_1\cup R_2\cup\{u\}$, where $R_2$ is a maximal clique of $G_2[X_2]$ and $u\in\{v,v'\}$. In particular, $Q-X_1$ equals $R_2\cup\{u\}$, which is a maximal clique of $H_2^+$}\label{claim:aux-cliques} Let $Q$ be a maximal clique of $H^+$ contained in neither $V(G_1)$ nor $V(G_2)$. Since $v$ and $v'$ are nonadjacent in $H^+$, $Q$ contains at most one of them. We first show that $Q$ contains one of them. Suppose otherwise. Thus, $Q\subseteq V(G)$ and, since $G$ is an induced subgraph of $H^+$, $Q$ is a maximal clique of $G$. Since $Q$ is contained in neither $V(G_1)$ nor $V(G_2)$, Lemma~\ref{lem:cliques-ast} implies that $Q\subseteq X_1\cup X_2$. Hence, $Q\cup\{v\}$ is a clique of $H^+$, contradicting the maximality of $Q$ in $H^+$. Thus, $Q$ contains exactly one of $v$ and $v'$. Let $u$ be this vertex. Moreover, $(Q-\{u\})\cap V(G_i)\neq\emptyset$ for each $i\in\{1,2\}$. Indeed, if $(Q-\{u\})\cap V(G_i)=\emptyset$ for some $i\in\{1,2\}$, then $Q-\{u\}\subseteq X_{3-i}$ and hence any vertex of $X_i$ would be complete to $Q$ in $H^+$, contradicting the maximality of $Q$ in $H^+$. Therefore, $Q-\{u\}$ is a maximal clique of $G$, since any vertex of $G$ complete to $Q-\{u\}$ in $G$ would have to lie in $X_1\cup X_2$ (because $Q-\{u\}$ meets both $V(G_1)$ and $V(G_2)$) and would also be adjacent to $u$ in $H^+$, contradicting the maximality of $Q$ in $H^+$. Since $Q-\{u\}$ is contained in neither $V(G_1)$ nor $V(G_2)$ and $X_1$ is a clique of $G_1$, Lemma~\ref{lem:cliques-ast} implies that $Q-\{u\}=X_1\cup R_2$, where $R_2$ is a maximal clique of $G_2[X_2]$. Hence, $Q=X_1\cup R_2\cup\{u\}$. Finally, $Q-X_1$ equals $R_2\cup\{u\}$, which is a maximal clique of $H_2^+$ because $N_{H_2^+}(u)=X_2$. This proves the claim.

\proofclaimspace We now consider three subcases according to the size of $X_1\cap W$.

\proofsubcase{$\vert X_1\cap W\vert\geq 2$} Let $\phi$ be the $2$-coloring of $W$ such that $\phi\vert_{W_1}=\phi_1$ and $\phi\vert_{W_2-\{v\}}=\phi_2\vert_{W_2-\{v\}}$. Let $Q$ be a maximal clique of $H^+$ such that $\vert Q\cap W\vert\geq 2$. Suppose first that $Q\subseteq V(G_i)$ for some $i\in\{1,2\}$. Thus, $Q$ is a maximal clique of $H_i^+$, since $H_i^+=H^+[V(G_i)\cup\{v,v'\}]$. Since $Q\cap W=Q\cap W_i$, the properness of $\phi_i$ implies that $Q\cap W$ is not monochromatic under $\phi_i$. As $\phi$ agrees with $\phi_i$ on $Q\cap W$, $Q\cap W$ is not monochromatic under $\phi$. It only remains to consider the case where $Q$ is contained in neither $V(G_1)$ nor $V(G_2)$. Thus, by Claim~\ref{claim:aux-cliques}, we have $X_1\subseteq Q$. Since $X_1\cup\{v'\}$ is a maximal clique of $H_1^+$ and $(X_1\cup\{v'\})\cap W_1=X_1\cap W$, the properness of $\phi_1$ implies that $X_1\cap W$ is not monochromatic under $\phi_1$. Since $X_1\cap W\subseteq Q\cap W$ and $\phi$ agrees with $\phi_1$ on $W_1$, $Q\cap W$ is not monochromatic under $\phi$. Since $Q$ is an arbitrary maximal clique of $H^+$ such that $\vert Q\cap W\vert\geq 2$, $\phi$ is a proper $2$-coloring of $W$ with respect to $H^+$.

\proofsubcase{$\vert X_1\cap W\vert=1$} Let $w_1$ be the unique vertex of $X_1\cap W$. Since $X_1\cup\{v\}$ is a maximal clique of $H_1^+$, the properness of $\phi_1$ applied to $(X_1\cup\{v\})\cap W_1=\{w_1,v\}$ implies $\phi_1(w_1)\neq\phi_1(v)$. Interchanging the colors of $\phi_2$ if necessary, we assume that $\phi_2(v)=\phi_1(w_1)$. Let $\phi$ be the $2$-coloring of $W$ such that $\phi\vert_{W_1}=\phi_1$ and $\phi\vert_{W_2-\{v\}}=\phi_2\vert_{W_2-\{v\}}$. Let $Q$ be a maximal clique of $H^+$ such that $\vert Q\cap W\vert\geq 2$. Suppose first that $Q\subseteq V(G_i)$ for some $i\in\{1,2\}$. Thus, $Q$ is a maximal clique of $H_i^+$, since $H_i^+=H^+[V(G_i)\cup\{v,v'\}]$. Since $Q\cap W=Q\cap W_i$, the properness of $\phi_i$ implies that $Q\cap W$ is not monochromatic under $\phi_i$. As $\phi$ agrees with $\phi_i$ on $Q\cap W$, $Q\cap W$ is not monochromatic under $\phi$. It only remains to consider the case where $Q$ is contained in neither $V(G_1)$ nor $V(G_2)$. Thus, by Claim~\ref{claim:aux-cliques}, there exist a maximal clique $R_2$ of $G_2[X_2]$ and a vertex $u\in\{v,v'\}$ such that $Q=X_1\cup R_2\cup\{u\}$ and $R_2\cup\{u\}$ is a maximal clique of $H_2^+$. Since $X_1\cap W=\{w_1\}$, we have $w_1\in Q\cap W$. If $u=v$, then $\phi(v)=\phi_1(v)\neq\phi_1(w_1)=\phi(w_1)$, so $Q\cap W$ is not monochromatic under $\phi$. It only remains to consider the case where $u=v'$. Let $Q'=R_2\cup\{v\}$. Since $R_2\cup\{v'\}$ is a maximal clique of $H_2^+$ and $v$ and $v'$ are false twins in $H_2^+$, $Q'$ is also a maximal clique of $H_2^+$. Since $\vert Q\cap W\vert\geq 2$, $X_1\cap W=\{w_1\}$, and $v'\notin W$, $R_2\cap W$ is nonempty. Since $v\in W$, we have $\vert Q'\cap W\vert\geq 2$. Thus, the properness of $\phi_2$ ensures the existence of a vertex $w_2\in R_2\cap W$ such that $\phi_2(w_2)\neq\phi_2(v)$. Hence, $w_2\in Q\cap W$ and $\phi(w_2)=\phi_2(w_2)\neq\phi_2(v)=\phi_1(w_1)=\phi(w_1)$. Thus, $Q\cap W$ is not monochromatic under $\phi$. Since $Q$ is an arbitrary maximal clique of $H^+$ such that $\vert Q\cap W\vert\geq 2$, $\phi$ is a proper $2$-coloring of $W$ with respect to $H^+$.

\proofsubcase{$X_1\cap W=\emptyset$} Let $\phi$ be the $2$-coloring of $W$ such that $\phi\vert_{W_1-\{v\}}=\phi_1\vert_{W_1-\{v\}}$ and $\phi\vert_{W_2}=\phi_2$. Let $Q$ be a maximal clique of $H^+$ such that $\vert Q\cap W\vert\geq 2$. Suppose first that $Q\subseteq V(G_i)$ for some $i\in\{1,2\}$. Thus, $Q$ is a maximal clique of $H_i^+$, since $H_i^+=H^+[V(G_i)\cup\{v,v'\}]$. Since $Q\cap W=Q\cap W_i$, the properness of $\phi_i$ implies that $Q\cap W$ is not monochromatic under $\phi_i$. As $\phi$ agrees with $\phi_i$ on $Q\cap W$, $Q\cap W$ is not monochromatic under $\phi$. It only remains to consider the case where $Q$ is contained in neither $V(G_1)$ nor $V(G_2)$. Thus, by Claim~\ref{claim:aux-cliques}, $Q-X_1$ is a maximal clique of $H_2^+$. Since $X_1\cap W=\emptyset$, we have $Q\cap W=(Q-X_1)\cap W_2$. Since $\vert Q\cap W\vert\geq 2$, the properness of $\phi_2$ implies that $(Q-X_1)\cap W_2$ is not monochromatic under $\phi_2$. Since $\phi$ agrees with $\phi_2$ on $W_2$, $Q\cap W$ is not monochromatic under $\phi$. Since $Q$ is an arbitrary maximal clique of $H^+$ such that $\vert Q\cap W\vert\geq 2$, $\phi$ is a proper $2$-coloring of $W$ with respect to $H^+$.

\proofclaimspace The three subcases above cover all possibilities and, in each of them, $W$ admits a proper $2$-coloring with respect to $H^+$.

\proofcase{$\alpha=\plabel$} By Claim~\ref{claim:Hi+-balanced}, $H_1^+$ is balanced. Hence, Lemma~\ref{lemma:proper2coloring} yields a proper $2$-coloring $\phi_1$ of $W_1$ with respect to $H_1^+$.

We first consider two subcases according to whether or not $G_2[X_2]$ contains an induced $C_4$.

\proofsubcase{$G_2[X_2]$ contains no induced $C_4$} Notice that, if some induced $\overline{3K_2}$ of $H_2^+$ contained both $v$ and $v'$, then the four vertices other than $v$ and $v'$ would have to lie in $X_2$ and would induce a $C_4$ in $G_2[X_2]$, a contradiction. Thus, no induced $\overline{3K_2}$ of $H_2^+$ contains both $v$ and $v'$. By assertion~\eqref{item:2vertices} for $T_2$, $H_2^+$ is balanced. Hence, Lemma~\ref{lemma:proper2coloring} yields a proper $2$-coloring $\phi_2$ of $W_2$ with respect to $H_2^+$. Interchanging the colors of $\phi_2$ if necessary, we assume that $\phi_1(v)\neq\phi_2(v)$. Let $\phi$ be the $2$-coloring of $W$ such that $\phi\vert_{W_1}=\phi_1$ and $\phi\vert_{W_2-\{v\}}=\phi_2\vert_{W_2-\{v\}}$.

Let $Q$ be a maximal clique of $H^+$ such that $\vert Q\cap W\vert\geq 2$. Suppose first that $Q\subseteq V(H_1^+)$. Thus, $Q$ is a maximal clique of $H_1^+$, since $H_1^+$ is an induced subgraph of $H^+$. Since $Q\cap W=Q\cap W_1$ and $\vert Q\cap W\vert\geq 2$, the properness of $\phi_1$ implies that $Q\cap W_1$ is not monochromatic under $\phi_1$. Since $\phi$ agrees with $\phi_1$ on $W_1$, $Q\cap W$ is not monochromatic under $\phi$. Suppose now that $Q\subseteq V(G_2)$. Thus, $Q$ is a maximal clique of $G_2$. In fact, $Q$ is a maximal clique of $H_2^+$, since otherwise one of $v$ and $v'$ would be complete to $Q$, forcing $Q\subseteq X_2$ and thus $Q\cup\{x_1\}$ would be a clique of $H^+$ for any $x_1\in X_1$, contradicting the maximality of $Q$ in $H^+$. Since $Q\cap W=Q\cap(W_2-\{v\})$ and $\vert Q\cap W\vert\geq 2$, the properness of $\phi_2$ implies that $Q\cap W$ is not monochromatic under $\phi_2$. Since $\phi$ agrees with $\phi_2$ on $W_2-\{v\}$, $Q\cap W$ is not monochromatic under $\phi$.

It only remains to consider the case where $Q$ is contained in neither $V(H_1^+)$ nor $V(G_2)$. Since $V(H_1^+)$ and $V(G_2)$ partition $V(H^+)$, $Q$ meets both $V(H_1^+)$ and $V(G_2)$. As neither $v$ nor $v'$ is adjacent in $H^+$ to a vertex of $G_2$, $Q$ meets both $V(G_1)$ and $V(G_2)$. Since $\alpha=\plabel$, the only edges of $H^+$ with one endpoint in $V(G_1)$ and the other in $V(G_2)$ have one endpoint in $X_1$ and the other in $X_2$. Hence, $Q\subseteq X_1\cup X_2$. Thus, $Q\subseteq V(G)$. Since $G$ is an induced subgraph of $H^+$, $Q$ is a maximal clique of $G$. Since $Q$ meets both $V(G_1)$ and $V(G_2)$, Lemma~\ref{lem:cliques-ast} implies that $Q=R_1\cup R_2$, where $R_i$ is a maximal clique of $G_i[X_i]$ for each $i\in\{1,2\}$. If $\vert R_i\cap W\vert\geq 2$ for some $i\in\{1,2\}$, then $R_i\cup\{v'\}$ is a maximal clique of $H_i^+$, because $R_i$ is maximal in $G_i[X_i]$ and $v'$ is adjacent exactly to $X_i$ in $H_i^+$. Since $(R_i\cup\{v'\})\cap W_i=R_i\cap W$, the properness of $\phi_i$ implies that $R_i\cap W$ is not monochromatic under $\phi_i$. Since $R_i\cap W\subseteq Q\cap W$ and $\phi$ agrees with $\phi_i$ on $W_i-\{v\}$, $Q\cap W$ is not monochromatic under $\phi$. It only remains to consider the case where $\vert R_i\cap W\vert=1$ for each $i\in\{1,2\}$. Let $w_i$ be the unique vertex of $R_i\cap W$ for each $i\in\{1,2\}$. Moreover, for each $i\in\{1,2\}$, $R_i\cup\{v\}$ is a maximal clique of $H_i^+$ and $(R_i\cup\{v\})\cap W_i=\{v,w_i\}$. Hence, $\phi_i(v)\neq\phi_i(w_i)$ for each $i\in\{1,2\}$. Since $\phi_1(v)\neq\phi_2(v)$ and there are only two colors, it follows that $\phi(w_1)\neq\phi(w_2)$. Thus, $Q\cap W$ is not monochromatic under $\phi$. Since $Q$ is an arbitrary maximal clique of $H^+$ such that $\vert Q\cap W\vert\geq 2$, $\phi$ is a proper $2$-coloring of $W$ with respect to $H^+$.

\proofsubcase{$G_2[X_2]$ contains an induced $C_4$} We first prove that $X_1$ is a clique of $G_1$. Suppose, for a contradiction, that $X_1$ contains two vertices $x$ and $x'$ that are nonadjacent in $G_1$. Let $C$ be a subset of $X_2$ that induces a $C_4$ in $G_2$. Since $\alpha=\plabel$, every vertex of $X_1$ is adjacent in $G$ to every vertex of $X_2$. Hence, $C\cup\{x,x'\}$ induces $\overline{3K_2}$ in $G$, contradicting the balancedness of $G$. Thus, $X_1$ is a clique of $G_1$. Hence, $X_1\cup\{v\}$ and $X_1\cup\{v'\}$ are maximal cliques of $H_1^+$, because each of $v$ and $v'$ is adjacent exactly to $X_1$ in $H_1^+$ and $v$ and $v'$ are nonadjacent in $H_1^+$.

Since $X_1$ is nonempty, let $x$ be a vertex of $X_1$. Since $\alpha=\plabel$, the graph $H_2$ is isomorphic to $G[V(G_2)\cup\{x\}]$. Thus, as $G$ is balanced, $H_2$ is balanced. Hence, Lemma~\ref{lemma:proper2coloring} yields a proper $2$-coloring $\phi_2$ of $W_2$ with respect to $H_2$. Since $H_2$ is balanced, its clique hypergraph $\mathcal{C}(H_2)$ is balanced. Theorem~\ref{thm:balancedhypergraphs} yields a proper $2$-coloring $\psi_2$ of the induced subhypergraph $\mathcal{C}(H_2)_{W_2-\{v\}}$.

We prove three claims that will be used in the subcases below.

\proofclaim{If $Q$ is a maximal clique of $H_2$ contained in $V(G_2)$ and $\vert Q\cap W\vert\geq 2$, then $Q\cap W$ is not monochromatic under $\psi_2$}\label{claim:psi2-G2} Indeed, since $W_2-\{v\}=W\cap V(G_2)$ and $Q\subseteq V(G_2)$, we have $Q\cap W=Q\cap(W_2-\{v\})$. Hence, $\vert Q\cap(W_2-\{v\})\vert\geq 2$ and, by the choice of $\psi_2$, $Q\cap(W_2-\{v\})$ is not monochromatic under $\psi_2$. Therefore, $Q\cap W$ is not monochromatic under $\psi_2$. This proves the claim.

\proofclaim{If $Q$ is a maximal clique of $H^+$ and $Q\subseteq V(G_2)$, then $Q$ is a maximal clique of $H_2$}\label{claim:pendant-H2} Since $G_2$ is an induced subgraph of $H^+$, $Q$ is a maximal clique of $G_2$. Thus, if $Q$ were not maximal in $H_2$, then $v$ would be complete to $Q$ in $H_2$ and hence $Q\subseteq X_2$. Since $X_1$ is nonempty and complete to $X_2$ in $H^+$, any vertex of $X_1$ would be complete to $Q$ in $H^+$, contradicting the maximality of $Q$ in $H^+$. Thus, $Q$ is a maximal clique of $H_2$. This proves the claim.

\proofclaim{If $Q$ is a maximal clique of $H^+$ contained in neither $V(H_1^+)$ nor $V(G_2)$, then $Q=X_1\cup R_2$, where $R_2$ is a maximal clique of $G_2[X_2]$ and $R_2\cup\{v\}$ is a maximal clique of $H_2$}\label{claim:pendant-mixed} Since $V(H_1^+)$ and $V(G_2)$ partition $V(H^+)$, $Q$ meets both $V(H_1^+)$ and $V(G_2)$. As neither $v$ nor $v'$ is adjacent in $H^+$ to a vertex of $G_2$, $Q$ meets both $V(G_1)$ and $V(G_2)$ and is contained in $V(G)$. Since $G$ is an induced subgraph of $H^+$, $Q$ is a maximal clique of $G$. Since $X_1$ is a clique of $G_1$, Lemma~\ref{lem:cliques-ast} implies that $Q=X_1\cup R_2$, where $R_2$ is a maximal clique of $G_2[X_2]$. Since $N_{H_2}(v)=X_2$, the maximality of $R_2$ in $G_2[X_2]$ implies that $R_2\cup\{v\}$ is a maximal clique of $H_2$. This proves the claim.

\proofclaimspace We now consider three subcases according to the size of $X_1\cap W$.

\proofsubsubcase{$\vert X_1\cap W\vert\geq 2$} Let $\phi$ be the $2$-coloring of $W$ such that $\phi\vert_{W_1}=\phi_1$ and $\phi\vert_{W_2-\{v\}}=\psi_2$. Let $Q$ be a maximal clique of $H^+$ such that $\vert Q\cap W\vert\geq 2$. Suppose first that $Q\subseteq V(H_1^+)$. Since $H_1^+$ is an induced subgraph of $H^+$, $Q$ is a maximal clique of $H_1^+$. Since $Q\cap W=Q\cap W_1$, the properness of $\phi_1$ implies that $Q\cap W$ is not monochromatic under $\phi_1$ and hence not monochromatic under $\phi$. Suppose now that $Q\subseteq V(G_2)$. By Claim~\ref{claim:pendant-H2}, $Q$ is a maximal clique of $H_2$. By Claim~\ref{claim:psi2-G2}, $Q\cap W$ is not monochromatic under $\psi_2$. Since $Q\cap W\subseteq W_2-\{v\}$ and $\phi$ agrees with $\psi_2$ on $W_2-\{v\}$, $Q\cap W$ is not monochromatic under $\phi$. It only remains to consider the case where $Q$ is contained in neither $V(H_1^+)$ nor $V(G_2)$. Thus, Claim~\ref{claim:pendant-mixed} implies that $X_1\subseteq Q$. Since $X_1\cup\{v'\}$ is a maximal clique of $H_1^+$ and $(X_1\cup\{v'\})\cap W_1=X_1\cap W$, the properness of $\phi_1$ implies that $X_1\cap W$ is not monochromatic under $\phi_1$. Since $X_1\cap W\subseteq Q\cap W$, $Q\cap W$ is not monochromatic under $\phi$. Since $Q$ is an arbitrary maximal clique of $H^+$ such that $\vert Q\cap W\vert\geq 2$, $\phi$ is a proper $2$-coloring of $W$ with respect to $H^+$.

\proofsubsubcase{$\vert X_1\cap W\vert=1$} Let $w_1$ be the unique vertex of $X_1\cap W$. Since $X_1\cup\{v\}$ is a maximal clique of $H_1^+$, the properness of $\phi_1$ applied to $(X_1\cup\{v\})\cap W_1=\{w_1,v\}$ implies $\phi_1(w_1)\neq\phi_1(v)$. Interchanging the colors of $\phi_2$ if necessary, we assume that $\phi_2(v)=\phi_1(w_1)$. Let $\phi$ be the $2$-coloring of $W$ such that $\phi\vert_{W_1}=\phi_1$ and $\phi\vert_{W_2-\{v\}}=\phi_2\vert_{W_2-\{v\}}$. Let $Q$ be a maximal clique of $H^+$ such that $\vert Q\cap W\vert\geq 2$. Suppose first that $Q\subseteq V(H_1^+)$. Since $H_1^+$ is an induced subgraph of $H^+$, $Q$ is a maximal clique of $H_1^+$. Since $Q\cap W=Q\cap W_1$, the properness of $\phi_1$ implies that $Q\cap W$ is not monochromatic under $\phi_1$ and hence not monochromatic under $\phi$. Suppose now that $Q\subseteq V(G_2)$. By Claim~\ref{claim:pendant-H2}, $Q$ is a maximal clique of $H_2$. Since $Q\cap W=Q\cap W_2$, the properness of $\phi_2$ implies that $Q\cap W$ is not monochromatic under $\phi_2$. Since $Q\cap W\subseteq W_2-\{v\}$ and $\phi$ agrees with $\phi_2$ on $W_2-\{v\}$, $Q\cap W$ is not monochromatic under $\phi$. It only remains to consider the case where $Q$ is contained in neither $V(H_1^+)$ nor $V(G_2)$. Thus, by Claim~\ref{claim:pendant-mixed}, $Q=X_1\cup R_2$ for some $R_2\subseteq X_2$ such that $R_2\cup\{v\}$ is a maximal clique of $H_2$. Since $\vert Q\cap W\vert\geq 2$ and $X_1\cap W=\{w_1\}$, $R_2\cap W$ is nonempty. Since $v\in W_2$, we have $\vert (R_2\cup\{v\})\cap W_2\vert\geq 2$. The properness of $\phi_2$ ensures the existence of $w_2\in R_2\cap W$ such that $\phi_2(w_2)\neq\phi_2(v)$. Hence, $w_2\in Q\cap W$ and $\phi(w_2)=\phi_2(w_2)\neq\phi_2(v)=\phi_1(w_1)=\phi(w_1)$. Thus, $Q\cap W$ is not monochromatic under $\phi$. Since $Q$ is an arbitrary maximal clique of $H^+$ such that $\vert Q\cap W\vert\geq 2$, $\phi$ is a proper $2$-coloring of $W$ with respect to $H^+$.

\proofsubsubcase{$X_1\cap W=\emptyset$} Let $\phi$ be the $2$-coloring of $W$ such that $\phi\vert_{W_1}=\phi_1$ and $\phi\vert_{W_2-\{v\}}=\psi_2$. Let $Q$ be a maximal clique of $H^+$ such that $\vert Q\cap W\vert\geq 2$. Suppose first that $Q\subseteq V(H_1^+)$. Since $H_1^+$ is an induced subgraph of $H^+$, $Q$ is a maximal clique of $H_1^+$. Since $Q\cap W=Q\cap W_1$, the properness of $\phi_1$ implies that $Q\cap W$ is not monochromatic under $\phi_1$ and hence not monochromatic under $\phi$. Suppose now that $Q\subseteq V(G_2)$. By Claim~\ref{claim:pendant-H2}, $Q$ is a maximal clique of $H_2$. By Claim~\ref{claim:psi2-G2}, $Q\cap W$ is not monochromatic under $\psi_2$. Since $Q\cap W\subseteq W_2-\{v\}$ and $\phi$ agrees with $\psi_2$ on $W_2-\{v\}$, $Q\cap W$ is not monochromatic under $\phi$. It only remains to consider the case where $Q$ is contained in neither $V(H_1^+)$ nor $V(G_2)$. Thus, by Claim~\ref{claim:pendant-mixed}, $Q=X_1\cup R_2$ for some $R_2\subseteq X_2$ such that $R_2\cup\{v\}$ is a maximal clique of $H_2$. Since $X_1\cap W=\emptyset$, we have $Q\cap W=(R_2\cup\{v\})\cap W\cap V(G_2)$. Since $\vert Q\cap W\vert\geq 2$ and $R_2\cup\{v\}$ is a maximal clique of $H_2$, the properness of $\psi_2$ implies that $Q\cap W$ is not monochromatic under $\psi_2$ and hence not monochromatic under $\phi$. Since $Q$ is an arbitrary maximal clique of $H^+$ such that $\vert Q\cap W\vert\geq 2$, $\phi$ is a proper $2$-coloring of $W$ with respect to $H^+$.

\proofclaimspace Subcases 3.1 and 3.2 cover all possibilities in the case $\alpha=\plabel$ and, in each of them, $W$ admits a proper $2$-coloring with respect to $H^+$. We have reached the same conclusion in the cases where $\alpha=\flabel$ or $\alpha=\tlabel$. Since $W$ is an arbitrary subset of $V(H^+)$ with $v\in W$ and $v'\notin W$, Lemma~\ref{lemma:WconVynoV'} implies that $H^+$ is balanced. This proves assertion~\eqref{item:2vertices}.

\medskip
\noindent\emph{Proof of assertion~\eqref{item:1vertex}.} Assume that no induced $\overline{3K_2}$ of $H$ contains $v$. We will prove that $H$ is balanced.
\resetproofcases

We again consider three cases according to the value of $\alpha$.

\proofcase{$\alpha=\flabel$} Thus, $H_i=H[V(G_i)\cup\{v\}]$ and no induced $\overline{3K_2}$ of $H_i$ contains $v$, for each $i\in\{1,2\}$. Since assertion~\eqref{item:1vertex} holds for $T_i$, $H_i$ is balanced for each $i\in\{1,2\}$.

Let $W$ be an arbitrary subset of $V(H)$. By Lemma~\ref{lemma:proper2coloring}, to prove that $H$ is balanced, it suffices to give a proper $2$-coloring of $W$ with respect to $H$.

For each $i\in\{1,2\}$, let $W_i=W\cap V(H_i)$. Since both $H_1$ and $H_2$ are balanced, Lemma~\ref{lemma:proper2coloring} yields, for each $i\in\{1,2\}$, a proper $2$-coloring $\phi_i$ of $W_i$ with respect to $H_i$. If $v\in W$, interchange the colors of $\phi_2$ if necessary so that $\phi_1(v)=\phi_2(v)$. Let $\phi$ be the $2$-coloring of $W$ such that $\phi\vert_{W_i}=\phi_i$ for each $i\in\{1,2\}$. This is well defined because $W_1\cup W_2=W$ and either $W_1\cap W_2=\emptyset$ or $W_1\cap W_2=\{v\}$ with the two colorings agreeing on $v$.

Let $Q$ be a maximal clique of $H$ such that $\vert Q\cap W\vert\geq 2$. In this case, there are no edges of $H$ with one endpoint in $V(G_1)$ and the other in $V(G_2)$. Hence, $Q\subseteq V(H_i)$ for some $i\in\{1,2\}$. Thus, as $H_i$ is an induced subgraph of $H$, $Q$ is a maximal clique of $H_i$. Since $Q\cap W=Q\cap W_i$ and $\vert Q\cap W_i\vert\geq 2$, by the properness of $\phi_i$, $Q\cap W$ is not monochromatic under $\phi$. Since $Q$ is an arbitrary maximal clique of $H$ such that $\vert Q\cap W\vert\geq 2$, $\phi$ is a proper $2$-coloring of $W$ with respect to $H$. Since $W$ is an arbitrary subset of $V(H)$, Lemma~\ref{lemma:proper2coloring} implies that $H$ is balanced in this case.

\proofcase{$\alpha=\tlabel$} Thus, $H_i=H[V(G_i)\cup\{v\}]$ and no induced $\overline{3K_2}$ of $H_i$ contains $v$, for each $i\in\{1,2\}$. Since assertion~\eqref{item:1vertex} holds for $T_i$, $H_i$ is balanced for each $i\in\{1,2\}$.

Let $W$ be an arbitrary subset of $V(H)$. By Lemma~\ref{lemma:proper2coloring}, to prove that $H$ is balanced, it suffices to give a proper $2$-coloring of $W$ with respect to $H$.

We prove three claims about maximal cliques of $H$ that will be used in the subcases below.

\proofclaim{If $Q$ is a maximal clique of $H$ and $Q\subseteq V(G_i)$ for some $i\in\{1,2\}$, then $Q$ is a maximal clique of both $H_i$ and $H_i^+$}\label{claim:contained-cliques} Thus, as $H_i$ is an induced subgraph of $H$, $Q$ is a maximal clique of $H_i$. Moreover, $Q$ is not contained in $X_i$, since otherwise $Q\cup\{v\}$ would be a clique of $H_i$, contradicting the maximality of $Q$ in $H_i$. As $v'$ is adjacent exactly to $X_i$ in $H_i^+$, the vertex $v'$ is not complete to $Q$. Hence, $Q$ is also a maximal clique of $H_i^+$. This proves the claim.

\proofclaim{If $Q$ is a maximal clique of $H$ contained in neither $V(G_1)$ nor $V(G_2)$, then $Q=R_1\cup R_2\cup\{v\}$, where $R_i$ is a maximal clique of $G_i[X_i]$ for each $i\in\{1,2\}$. Moreover, for each $i\in\{1,2\}$, both $R_i\cup\{v\}$ and $R_i\cup\{v'\}$ are maximal cliques of $H_i^+$. In particular, $R_i\cup\{v\}$ is a maximal clique of $H_i$ for each $i\in\{1,2\}$}\label{claim:mixed-core} For each $i\in\{1,2\}$, let $R_i=Q\cap V(G_i)$. We first prove that $R_i\neq\emptyset$ for each $i\in\{1,2\}$. By symmetry, suppose, for a contradiction, that $R_2=\emptyset$. Since $Q$ is not contained in $V(G_1)$, we have $v\in Q$. As $Q$ is a clique and $N_H(v)\cap V(G_1)=X_1$, it follows that $Q\subseteq X_1\cup\{v\}$. Since $X_2$ is nonempty and complete to $X_1\cup\{v\}$ in $H$, a vertex of $X_2$ is complete to $Q$ in $H$, contradicting the maximality of $Q$ in $H$. This contradiction proves that indeed $R_1$ and $R_2$ are nonempty.

For each $i\in\{1,2\}$, every vertex of $R_i$ has a neighbor outside $V(G_i)$ in $H$, because $R_{3-i}$ is nonempty and $Q$ is a clique. Hence, $R_i\subseteq X_i$ for each $i\in\{1,2\}$. If $v\notin Q$, then $Q=R_1\cup R_2$ and $Q\cup\{v\}$ is a clique of $H$, contradicting the maximality of $Q$ in $H$. Thus, $v\in Q$ and so $Q=R_1\cup R_2\cup\{v\}$. As $G$ is an induced subgraph of $H$, the set $Q-\{v\}$ is a clique of $G$. Moreover, $Q-\{v\}$ is maximal in $G$, since any vertex of $G$ complete to $Q-\{v\}$ in $G$ would belong to $X_1\cup X_2$ (because $Q-\{v\}$ meets both $V(G_1)$ and $V(G_2)$) and thus would be adjacent to $v$ in $H$, contradicting the maximality of $Q$ in $H$. Hence, by Lemma~\ref{lem:cliques-ast}, $R_i$ is a maximal clique of $G_i[X_i]$ for each $i\in\{1,2\}$. This proves the first assertion of the claim.

Let $i\in\{1,2\}$. Since $R_i\subseteq X_i$, both $R_i\cup\{v\}$ and $R_i\cup\{v'\}$ are cliques of $H_i^+$. They are maximal because $v$ and $v'$ are nonadjacent in $H_i^+$ and any vertex of $G_i$ complete to one of them would belong to $X_i-R_i$ and would be complete to $R_i$ in $G_i[X_i]$, contradicting the maximality of $R_i$ in $G_i[X_i]$. Since $H_i$ is an induced subgraph of $H_i^+$, it follows that $R_i\cup\{v\}$ is a maximal clique of $H_i$. Since $i\in\{1,2\}$ was arbitrary, this completes the proof of the claim.

\proofclaimspace We consider three subcases.

\proofsubcase{$v\in W$} For each $i\in\{1,2\}$, let $W_i=W\cap V(H_i)$. Since $H_i$ is balanced for each $i\in\{1,2\}$, Lemma~\ref{lemma:proper2coloring} yields, for each $i\in\{1,2\}$, a proper $2$-coloring $\phi_i$ of $W_i$ with respect to $H_i$. Interchanging the colors of $\phi_2$ if necessary, we assume that $\phi_1(v)=\phi_2(v)$. Let $\phi$ be the $2$-coloring of $W$ such that $\phi\vert_{W_i}=\phi_i$ for each $i\in\{1,2\}$. This is well defined because $W_1\cup W_2=W$, $W_1\cap W_2=\{v\}$, and the two colorings agree on $v$.

Let $Q$ be a maximal clique of $H$ such that $\vert Q\cap W\vert\geq 2$. Suppose first that $Q\subseteq V(G_i)$ for some $i\in\{1,2\}$. By Claim~\ref{claim:contained-cliques}, $Q$ is a maximal clique of $H_i$. Since $Q\cap W=Q\cap W_i$, the properness of $\phi_i$ implies that $Q\cap W$ is not monochromatic under $\phi_i$. As $\phi$ agrees with $\phi_i$ on $Q\cap W$, $Q\cap W$ is not monochromatic under $\phi$. It only remains to consider the case where $Q$ is contained in neither $V(G_1)$ nor $V(G_2)$. By Claim~\ref{claim:mixed-core}, $Q=R_1\cup R_2\cup\{v\}$, where $R_i\subseteq X_i$ and $R_i\cup\{v\}$ is a maximal clique of $H_i$ for each $i\in\{1,2\}$. Since $v\in W$ and $\vert Q\cap W\vert\geq 2$, there is some $i\in\{1,2\}$ such that $(R_i\cup\{v\})\cap W_i$ has at least two vertices. By the properness of $\phi_i$, $(R_i\cup\{v\})\cap W_i$ is not monochromatic under $\phi_i$. Since $\phi$ agrees with $\phi_i$ on $W_i$ and $(R_i\cup\{v\})\cap W_i\subseteq Q\cap W$, it follows that $Q\cap W$ is not monochromatic under $\phi$. Since $Q$ is an arbitrary maximal clique of $H$ such that $\vert Q\cap W\vert\geq 2$, $\phi$ is a proper $2$-coloring of $W$ with respect to $H$ in this subcase.

\proofsubcase{$v\notin W$ and neither $G_1[X_1]$ nor $G_2[X_2]$ contains an induced $C_4$} For each $i\in\{1,2\}$, no induced $\overline{3K_2}$ of $H_i^+$ contains both $v$ and $v'$, since otherwise the four remaining vertices would lie in $X_i$ and induce a $C_4$ in $G_i[X_i]$. Thus, by assertion~\eqref{item:2vertices} for $T_i$, $H_i^+$ is balanced for each $i\in\{1,2\}$.

For each $i\in\{1,2\}$, let $W_i=(W\cap V(G_i))\cup\{v\}$. Since $H_i^+$ is balanced for each $i\in\{1,2\}$, Lemma~\ref{lemma:proper2coloring} yields, for each $i\in\{1,2\}$, a proper $2$-coloring $\phi_i$ of $W_i$ with respect to $H_i^+$. Interchanging the colors of $\phi_2$ if necessary, we assume that $\phi_1(v)\neq\phi_2(v)$. Let $\phi$ be the $2$-coloring of $W$ such that $\phi\vert_{W\cap V(G_i)}=\phi_i\vert_{W\cap V(G_i)}$ for each $i\in\{1,2\}$. This is well defined because $W\subseteq V(G)$ and $V(G_1)$ and $V(G_2)$ partition $V(G)$.

Let $Q$ be a maximal clique of $H$ such that $\vert Q\cap W\vert\geq 2$. Suppose first that $Q\subseteq V(G_i)$ for some $i\in\{1,2\}$. By Claim~\ref{claim:contained-cliques}, $Q$ is a maximal clique of $H_i^+$. Since $Q\cap W=Q\cap W_i$, the properness of $\phi_i$ implies that $Q\cap W$ is not monochromatic under $\phi_i$. As $\phi$ agrees with $\phi_i$ on $Q\cap W$, $Q\cap W$ is not monochromatic under $\phi$. It only remains to consider the case where $Q$ is contained in neither $V(G_1)$ nor $V(G_2)$. By Claim~\ref{claim:mixed-core}, $Q=R_1\cup R_2\cup\{v\}$, where, for each $i\in\{1,2\}$, $R_i$ is a maximal clique of $G_i[X_i]$ and both $R_i\cup\{v\}$ and $R_i\cup\{v'\}$ are maximal cliques of $H_i^+$. Since $v\notin W$, it follows that $Q\cap W=(R_1\cap W)\cup(R_2\cap W)$. Suppose first that $\vert R_i\cap W\vert\geq 2$ for some $i\in\{1,2\}$. Since $(R_i\cup\{v'\})\cap W_i=R_i\cap W$, the properness of $\phi_i$ implies that $R_i\cap W$ is not monochromatic under $\phi_i$. Since $\phi$ agrees with $\phi_i$ on $R_i\cap W$ and $R_i\cap W\subseteq Q\cap W$, it follows that $Q\cap W$ is not monochromatic under $\phi$. It only remains to consider the case where $\vert R_i\cap W\vert\leq 1$ for each $i\in\{1,2\}$. Since $\vert Q\cap W\vert\geq 2$, we have $\vert R_i\cap W\vert=1$ for each $i\in\{1,2\}$. Let $w_i$ be the unique vertex of $R_i\cap W$. Since $(R_i\cup\{v\})\cap W_i=\{w_i,v\}$, the properness of $\phi_i$ implies that $\phi_i(w_i)\neq\phi_i(v)$ for each $i\in\{1,2\}$. As there are only two colors and $\phi_1(v)\neq\phi_2(v)$, it follows that $\phi(w_1)\neq\phi(w_2)$. Thus, $Q\cap W$ is not monochromatic under $\phi$. Since $Q$ is an arbitrary maximal clique of $H$ such that $\vert Q\cap W\vert\geq 2$, $\phi$ is a proper $2$-coloring of $W$ with respect to $H$ in this subcase.

\proofsubcase{$v\notin W$ and $G_i[X_i]$ contains an induced $C_4$ for some $i\in\{1,2\}$} By symmetry, suppose that $G_1[X_1]$ contains an induced $C_4$. We first show that $X_2$ is a clique of $G_2$. Suppose, for a contradiction, that $X_2$ contains two vertices $x_2$ and $y_2$ that are nonadjacent in $G_2$. Let $C$ be a subset of $X_1$ inducing a $C_4$ in $G_1$. Since $X_1$ is complete to $X_2$ in $G$, $C\cup\{x_2,y_2\}$ induces $\overline{3K_2}$ in $G$, contradicting the balancedness of $G$. Hence, $X_2$ is a clique of $G_2$. In particular, $G_2[X_2]$ contains no induced $C_4$. Therefore, by assertion~\eqref{item:2vertices} for $T_2$, $H_2^+$ is balanced.

Let $W_1=W\cap V(G_1)$, let $W_1^v=(W\cap V(G_1))\cup\{v\}$, and let $W_2=(W\cap V(G_2))\cup\{v\}$. Since $H_1$ and $H_2^+$ are balanced, Lemma~\ref{lemma:proper2coloring} yields a proper $2$-coloring $\phi_1$ of $W_1$ with respect to $H_1$, a proper $2$-coloring $\phi_1^v$ of $W_1^v$ with respect to $H_1$, and a proper $2$-coloring $\phi_2$ of $W_2$ with respect to $H_2^+$. If $\vert X_2\cap W\vert=1$, let $w_2$ be the unique vertex of $X_2\cap W$ and interchange the colors of $\phi_1^v$ if necessary so that $\phi_1^v(v)=\phi_2(w_2)$. Let $\phi$ be the $2$-coloring of $W$ defined as follows. If $\vert X_2\cap W\vert=1$, set $\phi\vert_{W\cap V(G_1)}=\phi_1^v\vert_{W\cap V(G_1)}$ and $\phi\vert_{W\cap V(G_2)}=\phi_2\vert_{W\cap V(G_2)}$. Otherwise, set $\phi\vert_{W\cap V(G_1)}=\phi_1$ and $\phi\vert_{W\cap V(G_2)}=\phi_2\vert_{W\cap V(G_2)}$.

Let $Q$ be a maximal clique of $H$ such that $\vert Q\cap W\vert\geq 2$. Suppose first that $Q\subseteq V(G_1)$. By Claim~\ref{claim:contained-cliques}, $Q$ is a maximal clique of $H_1$. Since $Q\subseteq V(G_1)$, we have $Q\cap W=Q\cap W_1=Q\cap W_1^v$. If $\vert X_2\cap W\vert=1$, then the properness of $\phi_1^v$ implies that $Q\cap W$ is not monochromatic under $\phi_1^v$ and, since $\phi$ agrees with $\phi_1^v$ on $Q\cap W$, $Q\cap W$ is not monochromatic under $\phi$. If $\vert X_2\cap W\vert\neq 1$, then the properness of $\phi_1$ implies that $Q\cap W$ is not monochromatic under $\phi_1$ and, since $\phi$ agrees with $\phi_1$ on $Q\cap W$, $Q\cap W$ is not monochromatic under $\phi$. Suppose now that $Q\subseteq V(G_2)$. By Claim~\ref{claim:contained-cliques}, $Q$ is a maximal clique of $H_2^+$. Since $Q\cap W=Q\cap W_2$, the properness of $\phi_2$ implies that $Q\cap W$ is not monochromatic under $\phi_2$. As $\phi$ agrees with $\phi_2$ on $Q\cap W$, $Q\cap W$ is not monochromatic under $\phi$. It only remains to consider the case where $Q$ is contained in neither $V(G_1)$ nor $V(G_2)$. Since $X_2$ is a clique of $G_2$, Claim~\ref{claim:mixed-core} implies that $Q=R_1\cup X_2\cup\{v\}$, where $R_1$ is a maximal clique of $G_1[X_1]$, $X_2\cup\{v'\}$ is a maximal clique of $H_2^+$, and $R_1\cup\{v\}$ is a maximal clique of $H_1$. Suppose first that $\vert X_2\cap W\vert\geq 2$. Since $(X_2\cup\{v'\})\cap W_2=X_2\cap W$, the properness of $\phi_2$ implies that $X_2\cap W$ is not monochromatic under $\phi_2$. Since $\phi$ agrees with $\phi_2$ on $X_2\cap W$ and $X_2\cap W\subseteq Q\cap W$, it follows that $Q\cap W$ is not monochromatic under $\phi$. Suppose next that $X_2\cap W=\emptyset$. In this case, $Q\cap W=R_1\cap W$. Since $(R_1\cup\{v\})\cap W_1=R_1\cap W=Q\cap W$, the properness of $\phi_1$ implies that $Q\cap W$ is not monochromatic under $\phi_1$. As $\phi$ agrees with $\phi_1$ on $Q\cap W$, $Q\cap W$ is not monochromatic under $\phi$. Finally, suppose that $\vert X_2\cap W\vert=1$ and let $w_2$ be the unique vertex of $X_2\cap W$. Since $\vert Q\cap W\vert\geq 2$ and $X_2\cap W=\{w_2\}$, $R_1\cap W$ is nonempty. Since $(R_1\cup\{v\})\cap W_1^v$ contains $v$ and at least one vertex of $R_1\cap W$, by the properness of $\phi_1^v$, there exists a vertex $w_1\in R_1\cap W$ such that $\phi_1^v(w_1)\neq\phi_1^v(v)=\phi_2(w_2)$. Hence, $Q\cap W$ is not monochromatic under $\phi$. Since $Q$ is an arbitrary maximal clique of $H$ such that $\vert Q\cap W\vert\geq 2$, $\phi$ is a proper $2$-coloring of $W$ with respect to $H$ in this subcase.

\proofclaimspace The three subcases above cover all possibilities and, in each of them, $W$ admits a proper $2$-coloring with respect to $H$. Since $W$ is an arbitrary subset of $V(H)$, Lemma~\ref{lemma:proper2coloring} implies that $H$ is balanced in this case.

\proofcase{$\alpha=\plabel$} We first prove that no induced $\overline{3K_2}$ of $H^+$ contains both $v$ and $v'$. Suppose, for a contradiction, that such an induced subgraph exists. Since $v$ and $v'$ are false twins with common neighborhood $\TS(T)=X_1$, the other four vertices lie in $X_1$ and induce a $C_4$ in $G_1[X_1]$. By Remark~\ref{rmk:TS-path}, $X_2$ is nonempty. Let $x_2$ be a vertex of $X_2$. Since $\alpha=\plabel$, $x_2$ is adjacent in $H$ to every vertex of $X_1$ and is nonadjacent to $v$ in $H$. Thus, replacing $v'$ by $x_2$ produces an induced $\overline{3K_2}$ of $H$ containing $v$, contradicting the assumption that no induced $\overline{3K_2}$ of $H$ contains $v$. Hence, no induced $\overline{3K_2}$ of $H^+$ contains both $v$ and $v'$. By assertion~\eqref{item:2vertices}, already proved for $T$, $H^+$ is balanced. Since $H=H^+-v'$ and the class of balanced graphs is hereditary, $H$ is balanced in this case.

\proofclaimspace The three cases above prove assertion~\eqref{item:1vertex}. Together with the proof of assertion~\eqref{item:2vertices}, this shows that, assuming $G$ is balanced and that both assertions hold for $T_1$ and $T_2$, both assertions hold for $T$. Since the base case was proved at the beginning, this completes the induction and the proof of the proposition.\end{proof}

The following theorem is the main result of this work. It characterizes balancedness within the class of distance-hereditary graphs both by the hereditary clique-Helly property and by a single forbidden induced subgraph.

\begin{theorem}\label{thm:DHbalancedGraphs} Let $G$ be a distance-hereditary graph. The following assertions are equivalent:
\begin{enumerate}[(i)]
\item\label{item:DH-balanced} $G$ is balanced.
\item\label{item:DH-HCH} $G$ is hereditary clique-Helly.
\item\label{item:DH-no-co3K2} $G$ contains no induced $\overline{3K_2}$.
\end{enumerate}
\end{theorem}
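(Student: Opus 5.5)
We prove the cycle of implications \eqref{item:DH-balanced}$\Rightarrow$\eqref{item:DH-HCH}$\Rightarrow$\eqref{item:DH-no-co3K2}$\Rightarrow$\eqref{item:DH-balanced}.

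\emph{The easy implications.} The implication \eqref{item:DH-balanced}$\Rightarrow$\eqref{item:DH-HCH} is exactly Theorem~\ref{thm:balanced-HCH}. For \eqref{item:DH-HCH}$\Rightarrow$\eqref{item:DH-no-co3K2}, it suffices to check that $\overline{3K_2}$ itself is not clique-Helly. Write its vertices as three nonadjacent pairs $\{a,a'\},\{b,b'\},\{c,c'\}$. The triangles $\{a,b,c\}$, $\{a,b',c'\}$, $\{a',b,c'\}$ and $\{a',b',c\}$ are maximal cliques, since the graph is triangle-maximal. Any two of them share exactly one vertex, so they pairwise intersect. Their total intersection is empty. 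Hence a graph containing an induced $\overline{3K_2}$ is not hereditary clique-Helly.

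\emph{Structure of the main implication.} For \eqref{item:DH-no-co3K2}$\Rightarrow$\eqref{item:DH-balanced}, Theorem~\ref{thm:NewDefinitionDH} lets us assume $G=\DHG(T)$ for some $\OVE$-tree $T$. We argue by induction on $T$. If $T$ has one vertex, $G=K_1$, which is balanced. Otherwise $T=T_1\oplus_\alpha T_2$. By Remark~\ref{rmk:TS-path}, each $G_i=\DHG(T_i)=G[V(T_i)]$ is an induced subgraph of $G$. So each $G_i$ is $\overline{3K_2}$-free and, by induction, balanced.

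If $\alpha=\flabel$, then $G$ is the disjoint union of $G_1$ and $G_2$. Its maximal cliques are those of $G_1$ and $G_2$, so colorings given by Lemma~\ref{lemma:proper2coloring} combine directly and $G$ is balanced.

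If $\alpha\in\{\tlabel,\plabel\}$, then $G=H_1\ast H_2$, where $H_i$ is $G_i$ plus a vertex $v$ with $N_{H_i}(v)=X_i=\TS(T_i)$. Both $X_i$ are nonempty, since the root lies in $\TS$. We apply Proposition~\ref{prop:ast-bal}, which requires showing that $H_i$, and in certain cases $H_i^+$, is balanced.

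\emph{Verifying the hypotheses of Proposition~\ref{prop:ast-bal}.} Pick $x\in X_{3-i}$. Then $H_i\cong G[V(G_i)\cup\{x\}]$, so $H_i$ has no induced $\overline{3K_2}$ at all, and in particular none containing $v$. Since $G_i$ is balanced, Proposition~\ref{prop:TS-bal}\eqref{item:1vertex} applied to $T_i$ shows that $H_i$ is balanced.

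When $X_{3-i}$ is not a clique, pick nonadjacent $y,y'\in X_{3-i}$. Then $H_i^+\cong G[V(G_i)\cup\{y,y'\}]$ is $\overline{3K_2}$-free. By Proposition~\ref{prop:TS-bal}\eqref{item:2vertices}, $H_i^+$ is balanced.

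These are exactly the inputs required in each of the three cases of Proposition~\ref{prop:ast-bal}. Case \eqref{item:TSareCliques} needs only $H_1,H_2$. Case \eqref{item:TS1isClique} needs $H_1^+$, which is available because $X_2$ is not a clique. Case \eqref{item:TSareNotCliques} needs both $H_i^+$. In every case the proposition gives that $G$ is balanced.

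\emph{Where the difficulty lies.} The main obstacle is matching the roles correctly. Balancedness of $H_i^+$ must be derived from $X_{3-i}$ failing to be a clique, not from $X_i$. The case analysis of Proposition~\ref{prop:ast-bal} is phrased in terms of which $X_i$ is a clique, so it has to be checked that the augmentations it demands are precisely the ones realizable inside $G$. That check goes through as described above. All the genuinely hard coloring work is already done in Propositions~\ref{prop:ast-bal} and~\ref{prop:TS-bal}.
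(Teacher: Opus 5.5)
Your proof is correct and follows the paper's argument essentially step by step. It uses the same induction on $\OVE$-trees, obtains the balancedness of $H_i$ (and of $H_i^+$ when $X_{3-i}$ is not a clique) from Proposition~\ref{prop:TS-bal} in the same way, and applies Proposition~\ref{prop:ast-bal} in the same three cases. The only differences are cosmetic: you get $\overline{3K_2}$-freeness directly from the isomorphisms $H_i\cong G[V(G_i)\cup\{x\}]$ and $H_i^+\cong G[V(G_i)\cup\{y,y'\}]$ rather than from the paper's vertex-replacement contradiction, and you use four pairwise-intersecting triangles instead of three to show that $\overline{3K_2}$ is not clique-Helly.
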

\begin{proof} \resetproofclaims The implication ${\eqref{item:DH-balanced}}\Rightarrow{\eqref{item:DH-HCH}}$ follows from Theorem~\ref{thm:balanced-HCH}. The implication ${\eqref{item:DH-HCH}}\Rightarrow{\eqref{item:DH-no-co3K2}}$ follows because $\overline{3K_2}$ is not clique-Helly. Indeed, if $aa'$, $bb'$, and $cc'$ are its nonedges, then the maximal cliques $\{a,b,c'\}$, $\{a,b',c\}$, and $\{a',b,c\}$ are pairwise intersecting and have empty total intersection.

It only remains to prove ${\eqref{item:DH-no-co3K2}}\Rightarrow{\eqref{item:DH-balanced}}$. By Theorem~\ref{thm:NewDefinitionDH}, it suffices to prove that, for each $\OVE$-tree $T$, if $G=\DHG(T)$ and $G$ contains no induced $\overline{3K_2}$, then $G$ is balanced. We proceed by induction on the definition of $\OVE$-trees.

If $T$ has exactly one vertex and $G=\DHG(T)$, then $G$ has one vertex and is balanced.

We now consider the case where $T=T_1\oplus_\alpha T_2$ for some $\alpha\in\{\flabel,\tlabel,\plabel\}$. Let $G=\DHG(T)$ and suppose that $G$ contains no induced $\overline{3K_2}$. Let $G_i=\DHG(T_i)$ and $X_i=\TS(T_i)$ for each $i\in\{1,2\}$. By Remark~\ref{rmk:TS-path}, $G_i$ is an induced subgraph of $G$ for each $i\in\{1,2\}$. In particular, neither $G_1$ nor $G_2$ contains an induced $\overline{3K_2}$. Hence, by the induction hypothesis applied to $T_i$, $G_i$ is balanced for each $i\in\{1,2\}$.

If $\alpha=\flabel$, then $G=G_1\cup G_2$. Since $G_1$ and $G_2$ are balanced and the maximal cliques of $G$ are precisely the maximal cliques of $G_1$ and $G_2$, $G$ is balanced. Thus, in the remainder of the proof, we only consider the case where $\alpha\in\{\tlabel,\plabel\}$. Let $v$ be a vertex not in $V(G)$. For each $i\in\{1,2\}$, let $H_i$ be the graph that arises from $G_i$ by adding $v$ such that $N_{H_i}(v)=X_i$. Thus, $V(H_1)\cap V(H_2)=\{v\}$ and, by Definition~\ref{def:OVE-graph}, $G=H_1\ast H_2$.

\proofclaim{$H_i$ is balanced for each $i\in\{1,2\}$}\label{claim:main-Hi-balanced} Let $i\in\{1,2\}$. By Remark~\ref{rmk:TS-path}, $X_{3-i}$ is nonempty. Let $x$ be a vertex of $X_{3-i}$. If $H_i$ contained an induced $\overline{3K_2}$ containing $v$, then replacing $v$ by $x$ would produce an induced $\overline{3K_2}$ in $G$, because $x$ has in $G$ the same neighbors in $V(G_i)$ as $v$ has in $H_i$. This contradicts the assumption that $G$ contains no induced $\overline{3K_2}$. Hence, $H_i$ contains no induced $\overline{3K_2}$ containing $v$. Since $G_i$ is balanced, Proposition~\ref{prop:TS-bal}\eqref{item:1vertex} applied to $T_i$ implies that $H_i$ is balanced. This proves the claim.

\proofclaimspace For each $i\in\{1,2\}$, let $H_i^+$ be the graph that arises from $H_i$ by adding a false twin $v'$ of $v$.

\proofclaim{For each $i\in\{1,2\}$, if $X_{3-i}$ is not a clique of $G_{3-i}$, then $H_i^+$ is balanced}\label{claim:main-Hi+-balanced} Let $i\in\{1,2\}$ and suppose that $X_{3-i}$ is not a clique of $G_{3-i}$. Let $x$ and $x'$ be two vertices of $X_{3-i}$ that are nonadjacent in $G_{3-i}$. If $H_i^+$ contained an induced $\overline{3K_2}$ containing $v$ and $v'$, then replacing $v$ and $v'$ by $x$ and $x'$, respectively, would produce an induced $\overline{3K_2}$ in $G$. This replacement is valid because $x$ and $x'$ are nonadjacent in $G$ and have in $G$ the same neighbors in $V(G_i)$ as $v$ and $v'$ have in $H_i^+$. This contradicts the assumption that $G$ contains no induced $\overline{3K_2}$. Hence, no induced $\overline{3K_2}$ of $H_i^+$ contains both $v$ and $v'$. By Proposition~\ref{prop:TS-bal}\eqref{item:2vertices} applied to $T_i$, $H_i^+$ is balanced. This proves the claim.

\proofclaimspace We now apply Proposition~\ref{prop:ast-bal} according to the three possible clique alternatives for $X_1$ in $G_1$ and $X_2$ in $G_2$.

\resetproofcases

\proofcase{$X_1$ and $X_2$ are both cliques of $G_1$ and $G_2$, respectively} In this case, since $H_1$ and $H_2$ are balanced by Claim~\ref{claim:main-Hi-balanced}, Proposition~\ref{prop:ast-bal}\eqref{item:TSareCliques} implies that $G$ is balanced.

\proofcase{For some $i\in\{1,2\}$, $X_i$ is a clique of $G_i$ and $X_{3-i}$ is not a clique of $G_{3-i}$} For this index $i$, Claim~\ref{claim:main-Hi+-balanced} implies that $H_i^+$ is balanced. Moreover, $H_{3-i}$ is balanced by Claim~\ref{claim:main-Hi-balanced}. Hence, Proposition~\ref{prop:ast-bal}\eqref{item:TS1isClique} applied to $H_i$ and $H_{3-i}$ in place of $H_1$ and $H_2$ implies that $H_i\ast H_{3-i}$ is balanced. Since $H_i\ast H_{3-i}=G$, $G$ is balanced.

\proofcase{Neither $X_1$ is a clique of $G_1$ nor $X_2$ is a clique of $G_2$} In this case, Claim~\ref{claim:main-Hi+-balanced} applied with $i=1$ and $i=2$ implies that $H_1^+$ and $H_2^+$ are balanced, respectively. Hence, by Proposition~\ref{prop:ast-bal}\eqref{item:TSareNotCliques}, $G$ is balanced.
\proofclaimspace Together with the case $\alpha=\flabel$ considered above, the three cases above complete the inductive step. Since the base case was proved above, this completes the induction and the proof of the theorem.\end{proof}

\section{Algorithmic aspects}\label{sec:algorithmics}

In this section, we give an explicit linear-time algorithm for deciding balancedness within the class of distance-hereditary graphs. Given a distance-hereditary graph, the algorithm either returns a vertex set inducing a $\overline{3K_2}$ as a certificate that the graph is not balanced or concludes that the graph is balanced.

By Theorem~\ref{thm:DHbalancedGraphs}, balancedness of a distance-hereditary graph can be recognized in $O(m^2+n)$ time by applying the algorithm of Lin and Szwarcfiter~\cite{ReconocimientoHCH} for recognizing hereditary clique-Helly graphs, where $n$ denotes the number of vertices and $m$ the number of edges of the graph. Alternatively, since every distance-hereditary graph has clique-width at most $3$ and a corresponding $3$-expression can be built in linear time~\cite{MR1792124}, the theorem of Courcelle et al.~\cite{MR1739644} on $\mathrm{LinEMSOL}(\tau_1)$-definable optimization problems yields a linear-time algorithm that, given a distance-hereditary graph, finds a vertex set inducing $\overline{3K_2}$ or concludes that none exists. However, the constant factors hidden in the running time of such an algorithm are huge, even for small clique-width bounds, and this is unavoidable for general monadic second-order formulas~\cite{MR2536468,MR2092847}. Our algorithm works directly on an $\OVE$-tree and keeps the certificate construction explicit.

We begin by defining a tuple of auxiliary certificates associated with an $\OVE$-tree. Let $T$ be an $\OVE$-tree, let $G=\DHG(T)$, and let $X=\TS(T)$. A \emph{certificate tuple for $T$} is any quadruple $(N,C,W,S)$ whose components satisfy the following assertions:
\begin{enumerate}[(i)]
\item\label{it:certN} $N=\nil$ if and only if $X$ is a clique of $G$; otherwise, $N$ is a pair of nonadjacent vertices in $G[X]$.
\item\label{it:certC} $C=\nil$ if and only if $G[X]$ contains no induced $C_4$; otherwise, $C$ is the vertex set of an induced $C_4$ in $G[X]$.
\item\label{it:certW} $W=\nil$ if and only if $G$ contains no induced $W_4$ in which the vertices of the $4$-cycle lie in $X$ and the universal vertex lies in $V(G)-X$; otherwise, $W$ is the vertex set of such an induced $W_4$ in $G$.
\item\label{it:certS} $S=\nil$ if and only if $G$ contains no induced $\overline{3K_2}$; otherwise, $S$ is the vertex set of an induced $\overline{3K_2}$ in $G$.
\end{enumerate}

We now show that a certificate tuple for $T$ can be computed in linear time by Algorithm~\ref{algo:certificates}. We denote its output on input $T$ by $\cert(T)$.

\begin{algorithm}
\DontPrintSemicolon
\KwIn{An $\OVE$-tree $T$.}
\KwOut{A certificate tuple for $T$.}

\lIf{$\vert V(T)\vert=1$}{\KwRet{$(\nil,\nil,\nil,\nil)$}}\nllabel{line:base}
\Else{ 
Let $T_1$, $T_2$, and $\alpha$ be such that $T=T_1\oplus_\alpha T_2$ and let $v_i$ be the root of $T_i$ for each $i\in\{1,2\}$\nllabel{line:split}\;
$( N^{(1)}, C^{(1)}, W^{(1)}, S^{(1)})\gets \cert(T_1)$\nllabel{line:rec1}\;
$( N^{(2)}, C^{(2)}, W^{(2)}, S^{(2)})\gets \cert(T_2)$\nllabel{line:rec2}\;

\If{$\alpha=\flabel$}{

$N \gets\{v_1, v_2\}$\nllabel{line:f1}\;
\leIf{$C^{(1)}\neq \nil$}{$C \gets C^{(1)}$}{$C\gets C^{(2)}$}\nllabel{line:f2}
\leIf{$W^{(1)}\neq \nil$ }{$W\gets W^{(1)}$}{$W\gets W^{(2)}$}\nllabel{line:f3} 
\leIf{$S^{(1)}\neq \nil$}{$S\gets S^{(1)}$}{$S\gets S^{(2)}$}\nllabel{line:f4}

}\Else{
\If{$\alpha=\tlabel$}{

\leIf{$N^{(1)}\neq \nil$}{$N\gets N^{(1)}$}{$N\gets N^{(2)}$}\nllabel{line:t1}

\lIf{$C^{(1)}\neq \nil$ }{$C\gets C^{(1)}$}\nllabel{line:t2}
\lElseIf{$C^{(2)}\neq \nil$}{ 
$C\gets C^{(2)}$}\nllabel{line:t3}
\lElseIf{$N^{(1)}\neq \nil$ \KwAnd $N^{(2)}\neq \nil$}{
$C\gets {N^{(1)}}\cup{N^{(2)}}$
}\nllabel{line:t4}
\lElse{$C \gets \nil$}\nllabel{line:t5}

\leIf{$W^{(1)}\neq \nil$ }{$W\gets W^{(1)}$}{ $W\gets W^{(2)}$}\nllabel{line:t6}

}\ElseIf{$\alpha=\plabel$}{
$N \gets N^{(1)}$\nllabel{line:p1}\;
$C \gets C^{(1)}$\nllabel{line:p2}\;
\leIf{$C^{(1)}\neq \nil$}{$W\gets {C^{(1)}}\cup{\{v_2\}}$}{$W\gets\nil$}\nllabel{line:p3}

}

\lIf{$S^{(1)}\neq \nil$}{$S \gets S^{(1)}$}\nllabel{line:s1}
\lElseIf{$S^{(2)}\neq \nil$}{$S \gets S^{(2)}$}\nllabel{line:s2}
\lElseIf{$W^{(1)}\neq \nil$}{$S\gets {W^{(1)}}\cup{\{v_2\}}$}\nllabel{line:s3}
\lElseIf{$W^{(2)}\neq \nil$}{$S\gets {W^{(2)}}\cup{\{v_1\}}$}\nllabel{line:s4}
\lElseIf{$C^{(1)}\neq \nil$ \KwAnd $N^{(2)}\neq \nil$}{$S\gets {C^{(1)}}\cup{N^{(2)}}$}\nllabel{line:s5}
\lElseIf{$C^{(2)}\neq \nil$ \KwAnd $N^{(1)}\neq \nil$}{$S\gets {C^{(2)}}\cup{N^{(1)}}$}\nllabel{line:s6}
\lElse{$S\gets\nil$}\nllabel{line:s7}

}

\KwRet{$(N, C, W, S)$}\nllabel{line:return}
}

\caption{$\cert(T)$}\label{algo:certificates}
\end{algorithm}

\begin{lemma}\label{lemma:AlgoCertificates} For every $\OVE$-tree $T$, the output $\cert(T)$ of Algorithm~\ref{algo:certificates} is a certificate tuple for $T$. Moreover, Algorithm~\ref{algo:certificates} can be performed in linear time.\end{lemma}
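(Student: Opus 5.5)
We prove correctness by induction on the definition of $\OVE$-trees, verifying each of the four components \eqref{it:certN}--\eqref{it:certS} separately for $T=T_1\oplus_\alpha T_2$. We assume the recursive outputs are certificate tuples for $T_1$ and $T_2$. The base case is immediate: a single vertex $X=\{v\}$ is a clique, contains no $C_4$, and $G$ has no $W_4$ or $\overline{3K_2}$.

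Throughout, we write $G_i=\DHG(T_i)$ and $X_i=\TS(T_i)$, and use Remark~\ref{rmk:TS-path}: $G_i=G[V(T_i)]$ and $v_i\in X_i$. Every edge between $V(G_1)$ and $V(G_2)$ joins $X_1$ to $X_2$, with all such edges present when $\alpha\in\{\tlabel,\plabel\}$ and none when $\alpha=\flabel$.

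\emph{Components $N$, $C$, $W$.} These follow from how $X$ and $G[X]$ decompose.

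For $\alpha=\flabel$, $G[X]=G_1[X_1]\cup G_2[X_2]$. So $v_1,v_2$ are nonadjacent vertices of $X$. An induced $C_4$, being connected, lies in one side. An induced $W_4$ is connected, so it also lies in some $G_i$; its rim lies in $X\cap V(G_i)=X_i$ and its hub outside $X_i$.

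For $\alpha=\tlabel$, $G[X]$ is the join of $G_1[X_1]$ and $G_2[X_2]$. Then $X$ is a clique iff both $X_i$ are. An induced $C_4$ in a join either lies in one side or has the form $2K_1$ joined to $2K_1$, since a $C_4$ has no vertex adjacent to all others; this gives lines~\ref{line:t2}--\ref{line:t5}. For $W$, suppose the rim lies in $X$ and the hub $h$ lies outside $X$, say $h\in V(G_1)-X_1$. Then $h$ has no neighbor in $V(G_2)$, so the rim lies in $X_1$ and the $W_4$ lies in $G_1$.

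For $\alpha=\plabel$, $X=X_1$, so $N$ and $C$ are inherited from $T_1$. For $W$: if $C^{(1)}\neq\nil$, then $v_2$ is complete to $X_1$ and lies outside $X$, giving a $W_4$. Conversely, suppose a $W_4$ has rim in $X_1$. We need the hub $h\notin X_1$ to exist only when $G_1[X_1]$ has a $C_4$, and this is trivial since the rim is itself such a $C_4$.

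\emph{Component $S$.} This is the main step. Lines~\ref{line:s1}--\ref{line:s6} clearly output valid $\overline{3K_2}$'s. Line~\ref{line:s3} uses that $W^{(1)}$ is a $W_4$ with rim in $X_1$ and hub outside $X_1$, so the hub is nonadjacent to $v_2$, while $v_2$ is complete to the rim. Lines~\ref{line:s5} and~\ref{line:s6} use that a $C_4$ joined to $2K_1$ is $\overline{3K_2}$.

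The obstacle is completeness: if $G$ has an induced $\overline{3K_2}$ $F$ and $S^{(1)}=S^{(2)}=\nil$, some later test must fire. Let $F_i=F\cap V(G_i)$, both nonempty. Since $\overline{3K_2}$ has no cut vertex, no single vertex separates $F$. If $|F_2|=1$ and its vertex $x$ lies in $X_2$, then $F_1$ is five vertices, all adjacent to $x$ because $F$ has minimum degree $4$. Then $F_1\subseteq X_1$ and induces $W_4$, and this $W_4$ has all its vertices in $X_1$, which is not the configuration recorded by $W^{(1)}$.

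So the cleaner plan is to classify $F$ by its partition into two vertex sets joined by a complete bipartite graph between $F\cap X_1$ and $F\cap X_2$, with the parts outside the $X_i$ having no cross edges. The possible splits are:
\begin{itemize}
\item $C_4+2K_1$, which yields line~\ref{line:s5} or~\ref{line:s6};
\item a $W_4$ plus one vertex, where the hub is the unique vertex of $F_i$ outside $X_i$ and is nonadjacent to the single vertex on the other side, which yields line~\ref{line:s3} or~\ref{line:s4}.
\end{itemize}
All other splits are excluded by the fact that the complement $3K_2$ has no cross structure compatible with a join. The case with no vertex outside the $X_i$ and $|F_i|=1$ gives a dominating vertex, which $\overline{3K_2}$ lacks. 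If some $F_i$ contained two vertices outside $X_i$, the cross edges would be too few. Even splits $3+3$ are impossible, because a join of two $3$-vertex graphs contains a vertex of degree $\ge 4$ together with a forbidden pattern. I would verify this enumeration carefully by looking at the complement $3K_2$: a split with no cross nonedges inside $X$ is a partition of the three matching edges.

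\emph{Linear time.} Each node performs $O(1)$ work, since every returned set has at most six vertices and unions are constant-size. Hence the total time is $O(|V(T)|)$.
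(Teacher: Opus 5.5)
Your overall plan matches the paper's: induction on the $\OVE$-tree, checking the four components one by one, and, for $S$, classifying the induced $\overline{3K_2}$'s that meet both $V(G_1)$ and $V(G_2)$ into two types. The first is a $C_4$ in some $X_i$ joined to a nonadjacent pair of $X_{3-i}$. The second is a $W_4$ with rim in $X_i$ and hub in $V(G_i)-X_i$, plus one vertex of $X_{3-i}$. This classification is the paper's Claim~A. Your arguments for $N$, $C$, $W$ and for the soundness of lines~\ref{line:s1}--\ref{line:s6} are fine.

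The gap is in the completeness of $S$, which you rightly call the main step but do not actually prove. You claim that, when $F_2=\{x\}$, all five vertices of $F_1$ are adjacent to $x$ ``because $F$ has minimum degree $4$''. This is false: $\overline{3K_2}$ is $4$-regular, so $x$ has exactly four neighbors in $F_1$. The worry you draw from it, a $W_4$ lying entirely in $X_1$ that $W^{(1)}$ would not record, is therefore spurious. After that, the other splits are excluded only by assertion. ``The cross edges would be too few'' and ``a join of two $3$-vertex graphs contains a vertex of degree $\geq 4$ together with a forbidden pattern'' are not arguments: every vertex of $\overline{3K_2}$ has degree $4$, and you never showed that a $3+3$ split is a join. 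You also end by saying the enumeration still has to be verified.

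The missing idea is simply that every vertex of $F$ has exactly one nonneighbor in $F$.
\begin{itemize}
\item If $\vert F_1\vert,\vert F_2\vert\geq 2$, each vertex of $F_i$ has a neighbor in $F_{3-i}$, so it lies in $X_i$, and $F_1$ is complete to $F_2$. The three nonedges of $F$ then lie inside the sides, so both sides have even size. This forces a $4+2$ split in which the $4$-set induces $C_4$ and the $2$-set is a nonadjacent pair; parity also rules out $3+3$.
\item If $F_2=\{x\}$, then $x$ has four neighbors in $F_1$, so $x\in X_2$ and these four vertices lie in $X_1$ and induce $C_4$. The unique nonneighbor $h$ of $x$ is adjacent to the other four. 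Since $x$ is complete to $X_1$, $h\in V(G_1)-X_1$, so $F_1$ is exactly a $W_4$ of the type recorded by $W^{(1)}$.
\end{itemize}
This is how the paper proves it.

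Two smaller omissions. First, you do not treat $S$ when $\alpha=\flabel$; it follows because $\overline{3K_2}$ is connected. Second, ``each node does $O(1)$ work'' requires the split $T=T_1\oplus_\alpha T_2$ itself to take constant time. The paper achieves this by representing each subtree by its root and the first child of that root still belonging to it; copying subtrees would not give a linear bound.
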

\begin{proof} We first prove that $\cert(T)$ is a certificate tuple for $T$. Let $T$ be an $\OVE$-tree. We proceed by induction on the definition of $\OVE$-trees. Let $G=\DHG(T)$ and $X=\TS(T)$.

If $T$ has exactly one vertex, then $G$ has one vertex. Hence, the algorithm returns $(\nil,\nil,\nil,\nil)$, which is a certificate tuple for $T$.

Suppose now that $T=T_1\oplus_\alpha T_2$ for some $\alpha\in\{\flabel,\tlabel,\plabel\}$. Let $G_i=\DHG(T_i)$ and $X_i=\TS(T_i)$ for each $i\in\{1,2\}$. By the induction hypothesis, the recursive calls in lines~\ref{line:rec1} and~\ref{line:rec2} of Algorithm~\ref{algo:certificates} return certificate tuples $\cert(T_1)$ and $\cert(T_2)$, respectively. We will prove that the output $\cert(T)$ of Algorithm~\ref{algo:certificates} is a certificate tuple for $T$.
\resetproofclaims
\resetproofcases

\proofclaim{Suppose that $\alpha\in\{\tlabel,\plabel\}$. If $S$ is the vertex set of an induced $\overline{3K_2}$ in $G$ and $S$ is contained in neither $V(G_1)$ nor $V(G_2)$, then, for some $i\in\{1,2\}$, either $S=C\cup N$, where $C$ is the vertex set of an induced $C_4$ in $G_i[X_i]$ and $N$ is a pair of nonadjacent vertices of $G_{3-i}[X_{3-i}]$, or $S=Y\cup\{x\}$, where $Y$ induces in $G_i$ a $W_4$ in which the vertices of the $4$-cycle lie in $X_i$, the universal vertex lies in $V(G_i)-X_i$, and $x\in X_{3-i}$}\label{claim:cert-cross-obst} Let $S_i=S\cap V(G_i)$ for each $i\in\{1,2\}$. Suppose first that $\vert S_1\vert\geq 2$ and $\vert S_2\vert\geq 2$. Thus, for each $i\in\{1,2\}$, each vertex of $S_i$ has at least one neighbor in $S_{3-i}$ and, necessarily, $S_i\subseteq X_i$. Hence, $S_1$ is complete to $S_2$ in $G$. Consequently, for some $i\in\{1,2\}$, $S_i$ induces a $C_4$ in $G_i[X_i]$ and $S_{3-i}$ is a pair of nonadjacent vertices of $G_{3-i}[X_{3-i}]$. Taking $C=S_i$ and $N=S_{3-i}$, the first alternative holds. By symmetry, it only remains to consider the case where $\vert S_2\vert=1$. Let $x$ be the unique vertex of $S_2$. Since $x$ has exactly one nonneighbor in $S_1$ and is adjacent to the other four vertices of $S_1$, $x$ belongs to $X_2$, its nonneighbor in $S_1$ lies in $V(G_1)-X_1$, and its four neighbors in $S_1$ lie in $X_1$. These four vertices induce a $C_4$, and the remaining vertex of $S_1$ is adjacent to all of them. Thus, taking $Y=S_1$, the second alternative holds.

We consider three cases according to the value of $\alpha$.

\proofcase{$\alpha=\flabel$} Since $v_i$ is the root of $T_i$ for each $i\in\{1,2\}$, Remark~\ref{rmk:TS-path} implies that $v_i\in X_i$ for each $i\in\{1,2\}$. Moreover, since $G$ is the disjoint union of $G_1$ and $G_2$ and $X=X_1\cup X_2$, $v_1$ and $v_2$ are nonadjacent vertices in $G[X]$ and every induced $C_4$ in $G[X]$, every induced $W_4$ of the type described in \eqref{it:certW}, and every induced $\overline{3K_2}$ in $G$ is contained in either $G_1$ or $G_2$. Hence, the assignments in lines~\ref{line:f1}--\ref{line:f4} make assertions~\eqref{it:certN}--\eqref{it:certS} in the definition of a certificate tuple for $T$ hold.

\proofcase{$\alpha=\tlabel$} Since every nonedge of $G[X]$ is contained in $G_1[X_1]$ or in $G_2[X_2]$, line~\ref{line:t1} makes \eqref{it:certN} hold. Also, any vertex set inducing a $C_4$ in $G[X]$ is either contained in $X_1$, contained in $X_2$, or is the union of a pair of nonadjacent vertices of $G_1[X_1]$ and a pair of nonadjacent vertices of $G_2[X_2]$. Hence, lines~\ref{line:t2}--\ref{line:t5} make \eqref{it:certC} in the definition of a certificate tuple for $T$ hold. We now prove that line~\ref{line:t6} makes \eqref{it:certW} in the definition of a certificate tuple for $T$ hold. Suppose first that $W^{(i)}\neq\nil$ for some $i\in\{1,2\}$. By the induction hypothesis, $W^{(i)}$ is the vertex set of an induced $W_4$ in $G_i$ in which the vertices of the $4$-cycle lie in $X_i$ and the universal vertex lies in $V(G_i)-X_i$. Since $G_i$ is an induced subgraph of $G$, $X_i\subseteq X$, and $V(G_i)-X_i\subseteq V(G)-X$, if we let $W=W^{(i)}$, then $W$ satisfies \eqref{it:certW} in the definition of a certificate tuple for $T$. Conversely, let $W\neq\nil$ be as in \eqref{it:certW} in the definition of a certificate tuple for $T$. If the vertices of the $4$-cycle in $G[W]$ were not contained in $X_i$ for some $i\in\{1,2\}$, then, since $X_1$ is complete to $X_2$, two of them would lie in $X_1$ and the other two in $X_2$. But the universal vertex of $G[W]$ lies in $V(G)-X$ and has neighbors in at most one of $X_1$ and $X_2$, a contradiction. Thus, the vertices of the $4$-cycle in $G[W]$ are contained in $X_i$ for some $i\in\{1,2\}$, and thus the universal vertex of $G[W]$ must lie in $V(G_i)-X_i$. Consequently, $W^{(i)}\neq\nil$. Hence, such a vertex set $W$ exists if and only if $W^{(1)}\neq\nil$ or $W^{(2)}\neq\nil$, and line~\ref{line:t6} makes \eqref{it:certW} in the definition of a certificate tuple for $T$ hold. Finally, Claim~\ref{claim:cert-cross-obst} shows that lines~\ref{line:s1}--\ref{line:s7} make \eqref{it:certS} in the definition of a certificate tuple for $T$ hold.

\proofcase{$\alpha=\plabel$} Since $X=X_1$, a nonedge of $G[X]$ is exactly a nonedge of $G_1[X_1]$, and an induced $C_4$ in $G[X]$ is exactly an induced $C_4$ in $G_1[X_1]$. Hence, the assignments in lines~\ref{line:p1} and~\ref{line:p2} make \eqref{it:certN} and~\eqref{it:certC} in the definition of a certificate tuple for $T$ hold. We now prove that line~\ref{line:p3} makes \eqref{it:certW} in the definition of a certificate tuple for $T$ hold. Since $X=X_1$, the vertices of the $4$-cycle of any induced $W_4$ of the type described in \eqref{it:certW} in the definition of a certificate tuple for $T$ lie in $X_1$ and therefore induce a $C_4$ in $G_1[X_1]$. Hence, by the induction hypothesis, $C^{(1)}\neq\nil$. Conversely, if $C^{(1)}\neq\nil$, then $C^{(1)}$ is the vertex set of an induced $C_4$ in $G_1[X_1]$. Since $X_1$ is complete to $X_2$ in $G$ and $v_2\in V(G)-X$, the set $C^{(1)}\cup\{v_2\}$ induces the required $W_4$ in $G$. Hence, line~\ref{line:p3} makes \eqref{it:certW} in the definition of a certificate tuple for $T$ hold. Finally, Claim~\ref{claim:cert-cross-obst} shows that lines~\ref{line:s1}--\ref{line:s7} make \eqref{it:certS} in the definition of a certificate tuple for $T$ hold.
\proofclaimspace It only remains to prove the linear-time bound. Each tree considered by the recursion is represented by its root and by the first child of the root that belongs to that tree, if any. Line~\ref{line:base} takes constant time. In addition, line~\ref{line:split} can be performed in constant time because $v_1$ is the root of $T$, $v_2$ is the first child of $v_1$ in $T$, $\alpha$ is the label of the edge $v_1v_2$, the tree $T_2$ is determined by the root $v_2$ and by the first child of $v_2$, if any, while $T_1$ is determined by the root $v_1$ and by the next sibling of $v_2$ among the children of $v_1$, if any. Moreover, lines~\ref{line:f1}--\ref{line:return} can be performed in constant time because all involved vertex sets have size at most six. By the induction hypothesis, the recursive calls in lines~\ref{line:rec1} and~\ref{line:rec2} take linear time in $T_1$ and $T_2$, respectively. Thus, the total time is linear in $T$. This completes the proof of the lemma.\end{proof}

The structural characterization in Theorem~\ref{thm:DHbalancedGraphs} turns Algorithm~\ref{algo:certificates} into a recognition algorithm for balancedness of distance-hereditary graphs. Given a distance-hereditary graph $G$, build an $\OVE$-tree $T$ of $G$, compute $\cert(T)$, let $S$ be its last component, and either return that $G$ is not balanced together with $S$ if $S\neq\nil$, or return that $G$ is balanced otherwise.

\begin{theorem}\label{thm:DH-balanced-algo} There is a linear-time algorithm that, given a distance-hereditary graph $G$, decides whether $G$ is balanced. Moreover, if $G$ is not balanced, the algorithm returns a vertex set inducing a $\overline{3K_2}$ in $G$.\end{theorem}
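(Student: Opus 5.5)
The plan is to assemble the algorithm from three earlier results and then check correctness and running time separately. Given a distance-hereditary graph $G$ with $n$ vertices and $m$ edges, we first run the algorithm of Theorem~\ref{thm:AlgoRecognitionDH}. Since $G$ is distance-hereditary, it returns, in $O(n+m)$ time, an $\OVE$-tree $T$ with $G=\DHG(T)$. We then run Algorithm~\ref{algo:certificates} on $T$ and let $(N,C,W,S)=\cert(T)$. If $S\neq\nil$, the algorithm reports that $G$ is not balanced and outputs $S$; otherwise it reports that $G$ is balanced.

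For correctness, Lemma~\ref{lemma:AlgoCertificates} guarantees that $\cert(T)$ is a certificate tuple for $T$. By assertion~\eqref{it:certS} of the definition of a certificate tuple, $S=\nil$ exactly when $G=\DHG(T)$ has no induced $\overline{3K_2}$, and otherwise $S$ is the vertex set of an induced $\overline{3K_2}$ in $G$. By Theorem~\ref{thm:DHbalancedGraphs}, the equivalence of~\eqref{item:DH-balanced} and~\eqref{item:DH-no-co3K2}, a distance-hereditary graph is balanced if and only if it contains no induced $\overline{3K_2}$. Hence the answer is correct in both cases, and when $G$ is not balanced the returned $S$ is the required certificate.

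For the running time, construction of $T$ takes linear time by Theorem~\ref{thm:AlgoRecognitionDH}. By Lemma~\ref{lemma:AlgoCertificates}, Algorithm~\ref{algo:certificates} runs in time linear in the size of $T$. Since $V(T)=V(G)$ and $T$ is a tree, that size is $O(n)$. The final test on $S$ takes constant time, so the total is $O(n+m)$.

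The only point needing care is a bookkeeping detail rather than a mathematical obstacle: the statement concerns the graph $G$ itself, whereas the earlier results produce a graph isomorphic to $\DHG(T)$. We therefore take the $\OVE$-tree returned in Theorem~\ref{thm:AlgoRecognitionDH} to be built on the actual vertex set of $G$, so that $\DHG(T)=G$ and $S$ is literally a subset of $V(G)$. This holds because the tree is built from an ordering of the vertices of $G$ itself. If instead only an isomorphism is available, we translate $S$ through it; since $\vert S\vert=6$, this costs constant time.
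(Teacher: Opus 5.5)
Your proposal is correct and is essentially the paper's own proof: build an $\OVE$-tree by Theorem~\ref{thm:AlgoRecognitionDH}, compute $\cert(T)$ in linear time by Lemma~\ref{lemma:AlgoCertificates}, and use the equivalence of \eqref{item:DH-balanced} and \eqref{item:DH-no-co3K2} in Theorem~\ref{thm:DHbalancedGraphs} to read off balancedness from the last component $S$. Your extra remark about taking $V(T)=V(G)$, or otherwise translating the six vertices of $S$ through the isomorphism, is a harmless bookkeeping detail that the paper leaves implicit.
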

\begin{proof} Let $G$ be a distance-hereditary graph and consider the algorithm described above. By Theorem~\ref{thm:AlgoRecognitionDH}, an $\OVE$-tree $T$ of $G$ can be built in linear time. By Lemma~\ref{lemma:AlgoCertificates}, $\cert(T)$ can also be computed in linear time and its last component $S$ satisfies \eqref{it:certS}. Since $G$ is distance-hereditary, Theorem~\ref{thm:DHbalancedGraphs} implies that $G$ is balanced if and only if $S=\nil$. Hence, the whole algorithm is correct and can be completed in linear time.\end{proof}

\section*{Acknowledgements}

L.\ Busolini and G.\ Dur\'an were partially supported by UBACyT Grant 20020170100495BA, PIP-CONICET Grant 11220200100084CO, and ANPCyT-PICT Grant 2021-I-A-00755. G.\ Dur\'an was partially supported by ISCI, Chile (ICM-FIC: P05-004-F, CONICYT: FB0816). M.D.\ Safe was partially supported by CONICET Grant PIBAA 28720210101185CO and UNS Grant PGI 24/L133.

\bibliographystyle{abbrv}
\bibliography{biblioDHbal}

\end{document}